\newtheorem{definition}{Definition}[section]
\newtheorem{lemma}[definition]{Lemma}
\newtheorem{theorem}[definition]{Theorem}
\newtheorem{remark}[definition]{Remark}
\newcommand{\be}{\begin{equation}}
\newcommand{\ee}{\end{equation}}
\renewcommand{\leq}{\leqslant}
\definecolor{myred}{RGB}{160,0,0}
\definecolor{mygreen}{RGB}{0,160,0}
\newcommand{\ptl}{\partial}
\newcommand{\vph}{\varphi}
\newcommand{\beq}{\begin{equation}}
\newcommand{\eeq}{\end{equation}}
\newcommand{\mathd}{\mathrm{d}}
\newcommand{\tmem}[1]{{\em #1\/}}
\newcommand{\tmop}[1]{\text{  #1 }}
\newcommand{\tmmathbf}[1]{\ensuremath{\boldsymbol{#1}}}
\numberwithin{equation}{section}
\begin{document}

\title{Diffraction by a quarter--plane. Analytical continuation 
of spectral functions}

\author{R.~C.~Assier \& A.~V.~Shanin}


\maketitle

\begin{abstract}
The problem of diffraction by a Dirichlet quarter-plane (a flat cone) in a 3D space
is studied. The Wiener--Hopf equation for this case is derived and involves two unknown 
(spectral) functions depending on two complex variables. The aim of the present work is to build 
an analytical continuation of these functions onto a well-described Riemann manifold and to study their 
behaviour and singularities on this manifold. In order to do so, 
integral formulae for analytical continuation of the spectral functions are derived and used.  
It is shown that the  Wiener--Hopf problem can be reformulated using the concept of additive crossing of branch lines introduced in the paper. Both the integral formulae and the additive crossing reformulation are novel and represent the main results of this work.
\end{abstract}

\section{Introduction and literature review}


\paragraph*{Towards a generalisation of Sommerfeld's solution.}
In the late 19th century, Sommerfeld {\cite{Sommerfeldgerman}} published a
solution to the problem of diffraction by a half-plane by reducing it to a
two-dimensional problem and using a technique now known as Sommerfeld
integrals (see e.g.\ {\cite{babichbook}}). Since, in particular thanks to Jones'
simplification {\cite{Jones1952}}, another popular way of solving this
canonical problem is to make use of the Wiener-Hopf technique
{\cite{Wiener1931,Abrahams,Noble}}, which relies heavily
on the concept of analytical continuation in one complex variable, as well as
on the use of Liouville's theorem. Knowledge of this canonical solution (among
others) inspired the idea of (GTD) Geometrical Theory of Diffraction
{\cite{GTD,Borovikov}}. The aim of the GTD is to describe the
far-field resulting from diffraction by an obstacle exhibiting geometrical
singularities such as edges or corners. In particular, the Sommerfeld solution
allows an analytical description of the far-field diffracted by an edge.

Sommerfeld's solution admits a direct generalisation to the case of diffraction 
by a wedge with scalar equation of motion, and so does the Wiener--Hopf solution 
\cite{Shanin1996,Daniele,Nethercote2018}. 

In the mathematical sense, the Sommerfeld problem is that of diffraction by a half-line 
in a 2D medium. One can have a seemingly natural idea to 
generalize the Sommerfeld or the Wiener--Hopf method to the 3D space 
$(x_1, x_2, x_3)$, in which the scatterer is a quarter-plane (a flat cone)  placed at $x_3 = 0$, $x_1 > 0$, $x_2 >0$. 
The quarter-plane may be hard (resp. soft) in the acoustic formulation, i.e.\ bear Neumann (resp. Dirichlet) boundary conditions 
on both faces. 
Surprisingly, this has proven to be a much more complicated problem, and, though there exists a simple to use closed-form solution of the half-plane problem, this is not yet the case for the quarter-plane.
Such a problem can be treated as a {\em canonical diffraction problem}, 
and its solution would extend the applicability of the composite 
diffraction methods based on the locality principles, such as GTD or PTD (physical theory of diffraction). 
This problem is the subject of the present paper. 

\paragraph*{The quarter-plane as a flat cone.}
The most classic approach (see {\cite{satterwhite}}) 
uses the observation that a quarter-plane is a degenerate elliptic cone. 
Thus, it is possible to use
separation of variables and  sphero-conal coordinates in order to
express the solution as a multipole series involving Lam{\'e} functions (see
{\cite{Erdelyi1955}} for an in-depth description of these functions). However,
the convergence of such series is very poor at high frequencies which made
this approach difficult to use. A curious confirmation of the low usability 
of the series solution is the fact that URSI Commission~B awarded a prize 
in~2005 for an accurate approximate solution of this problem.  
The winning approach, {\cite{Klinkenbusch2005}}, was focused on enhancing convergence of the expansion series (see {\cite{blume}} and {\cite{Blume1999}} for details). 
We should note here also an unusual work, {\cite{Budaev2005a}}, probably inspired by the 
URSI prize appeal, where the author make use of the Feynman-Kac theorem in order to express the corner-diffracted part of the field as a mean calculated from realisations of three simple stochastic differential equations.   

An elegant approach applicable to diffraction by an arbitrary cone and particularly 
to a flat cone has been introduced in {\cite{smy1}} and {\cite{Smyshlyaev1993}}.
One converts the series into a contour integral by means of Bessel--Watson transform, 
and then deforms the contour to obtain fast convergence of the integral.
The integrand contains a Green's function for the        
Laplace-Beltrami operator on a sphere with a cut, which can be 
obtained, generally, by solving an integral equation.
Within this approach, the concept of the {\em oasis\/} domain has been 
introduced, i.e.\ of the domain in the 3D space where there are no scattered waves except 
the spherical wave diffracted by the tip of the cone. The position of the observation point in the oasis guarantees fast convergence of the resulting integral for the 
diffraction coefficient. Outside the 
oasis, the field contains also some waves diffracted by the edges of the
quarter-plane and some reflected waves, and the integral for the diffraction coefficient diverges. One can, however, regularize it by using Abel-Poisson type methods ({\cite{babich95}}, {\cite{Babich1996}}). 

Finding of the diffraction coefficient, i.e.\ the angular dependence of the 
diffracted spherical wave, is a complicated task, while other wave components 
can be found by relatively simple methods. 
An accurate description of such components can be given
using GTD for example {\cite{Assier2012b}}, Sommerfeld integrals
{\cite{Lyalinov2013,Lyalinov2015}}, or using ray asymptotics on a sphere with a cut
{\cite{Shanin2012}}. 
An interesting link between the spherical diffracted wave and the other diffracted
wave components can be found in {\cite{BorovikovPolyhedra}}. It says that the diffraction coefficient of the spherical wave emanating from a corner can be written as the solution of a PDE in a unit ball, for which the boundary data on the surface is provided by the other wave components, which can be found explicitly.

The idea of a sphere with a cut arises from separation of
variables in spherical coordinates. In particular, such approach (see for
example {\cite{jansen}}) allows one to describe the asymptotic behaviour of
the solution in the near-field, close to the corner. The field behaves like
$r^{\nu_1 - 1 / 2}$, where $r$ is the distance to the corner and $\nu_1 =
\sqrt{\lambda_1 + 1 / 4}$, where $\lambda_1$ is the first eigenvalue of the
Laplace-Beltrami operator on the sphere with a cut. The spectral analysis of
such operator is in itself a very interesting topic (see for example
{\cite{SLEEMAN,Keller1999}} as well as {\cite{Assier2016}} and
references therein).

Building on the ideas of {\cite{smy1}} and the concept of embedding formulae (introduced in
{\cite{williams}} and developed further in {\cite{embedding}}) and edge
Green's functions, more formulae, coined {\tmem{modified Smyshlyaev formulae}},
were introduced (see {\cite{shanin1,shanin2,Assier2012,assierphd,Valyaev2011}}) 
and showed to be naturally
convergent in a region much larger than the oasis zone, which makes it one of the most successful current techniques. In fact this zone of
convergence is the zone where no secondary diffracted wave is present in the
far-field. This allowed for a fast computation of the diffraction coefficient for
a wide range of incidence and observation directions in the case of Dirichlet
and Neumann boundary conditions. Still, computation of the diffraction coefficient in the areas where doubly diffracted waves exist remains problematic and no methods have so far succeeded in doing so.

\paragraph*{Generalisations of the Wiener--Hopf method.}
The idea of generalisation of the usual (1D) Wiener--Hopf method seems 
attractive but is far from trivial and, as we believe, a proper generalisation
demands development of substantially new methods. A ``safe'' approach, however, 
is described in {\cite{Meister1988}}, where the 1D Wiener--Hopf method is applied
successively to different coordinates, and the resulting solution becomes expressed 
as a result of infinite number of such operations. 

The difficulty with the 2D Wiener--Hopf method is as follows. In the 1D case, 
the Fourier transform of a function with a support on the positive half-axis $x$ is analytic in the upper half-plane of the Fourier 
variable $\xi$, and vice versa (see e.g.~{\cite{Noble}}).
In the 2D case, a direct generalisation of this fact is valid, namely, a function 
having a support in the quadrant $x_1 > 0$, $x_2 >0$, after a Fourier transform, 
gives a function analytic in the domain ${\rm Im}(\xi_1) > 0$, ${\rm Im}(\xi_2) > 0$   
(see (\ref{eq:defdoubleFourier}) for the definition of the Fourier transform and Theorem \ref{th:FTth2.1} for a rigorous statement of this result).
The inverse statement is also true. The problem is that there exists no such 
a simple criterion for a function having its support in the complement of 
the quadrant, i.e.\ in $x_1 < 0$ or $x_2 < 0$. This problem does not arise in the 1D 
case since a complement of a half-line is also a half-line. 

Some generalisations of the Wiener--Hopf method can be built if the kernel has special properties. Namely, if the kernel can be factorised into two kernels with supports\footnote{Here by supports, we mean the supports of their inverse Fourier transform} 
in two quadrants, then the generalisation is straightforward {\cite{kakichev1968,Rabinovich1967}}. In {\cite{Rabinovich1967}} the restriction mentioned above has been slightly weakened to allow for kernels that can be written as a sum of two or three functions, the supports of which being restricted to a quadrant. 
Some special kernels (products of functions depending on a single variable)
have been studied in {\cite{Moraru1969}}.
Unfortunately, the kernel (\ref{eq:kerneldefqp}) to which the quarter-plane diffraction problem 
is reduced does not fit these restrictions. An attempt to factorise such a kernel was made in {\cite{Vasiliev2000}} and called \textit{wave factorisation}, but we think that it is erroneous. The reason for this will be made much clearer in future work, when we will introduce the concept of \textit{bridges and arrows} for surface of integration visualisation in $\mathbb{C}^2$.


It has to be mentioned that other attempts have been made and were somehow
unsuccessful. One of the most famous such attempt is Radlow's work
({\cite{radlow}} and {\cite{Radlow1965}}) on an extension of the Wiener-Hopf
technique to two complex variables, but his formal closed-form solution led to
the wrong type of near-field behaviour and was hence considered erroneous by
the community (see for example {\cite{meister}}). A formal description of
where his proof went wrong (the solution does not satisfy the boundary condition) was given in {\cite{albani}}, and, more recently,
other technical reasons have been given in {\cite{Assier2018}},
showing along the way however that in the far-field and in the case of
Dirichlet conditions, Radlow's solution led to surprisingly accurate
results.

To conclude this introduction, we can say that getting a closed-form solution 
of the quarter-plane diffraction problem is still a challenging theoretical and practical 
task. Particularly, a generalisation of the Wiener--Hopf method to the case of two complex 
variables would be a considerable achievement.





\paragraph*{Aim of the paper.} The main aim of the present work, is to establish the theoretical framework needed to make progress with a two complex
variables approach of the Wiener--Hopf technique. After deriving the functional
equation associated to the quarter-plane problem, containing two unknown
functions $\tilde{W}$ and $\tilde{U}'$ of two complex variables, we will
endeavour to explicitly exhibit domains where these functions can be
analytically continued. This is motivated by the fact that, as illustrated in section \ref{sec:motivation}, a remarkable characteristic of a usual 1D Wiener--Hopf equation is that such domains can be found \textit{without} solving the problem explicitly.  The ideas and methods introduced in this work can be applied to plane sectors with any interior angle (not just $\pi/2$), but the presence of many multiple edge diffractions would make the exposition quite cumbersome. The main results of this work are:
\begin{itemize}

\item the analytical continuation integral formulae given in Theorems \ref{th:th3} and \ref{th:th34}, allowing to find the domain of analyticity of the spectral functions as given in Theorems \ref{th:th1step1}, \ref{th:th2step1} and \ref{th:th7};
\item the reformulation of the spectral problem given in Section \ref{sec:reformulationadditive} using the novel concept of additive crossing of branches defined in Definition \ref{def:additivecrossing}.
\end{itemize}

In the future, though not in the present paper, we will show how the additive crossing property will lead naturally to the appearance of a $r^{\nu_1-1/2}$ behaviour at the corner. Moreover, the knowledge of the expected singularities on the whole of the Riemann manifold will help us to construct an approximation solution, not dissimilar to Radlow's ansatz, but hopefully more accurate.

\paragraph*{Plan of the paper.}
In
Section~\ref{sec:functionaleq}, we will derive the functional equation using
Green's theorem, which is a different approach to that used in
{\cite{Assier2018}} for example, and we will introduce the concept of
1/4-based and 3/4-based functions, allowing for a concise spectral formulation
of the physical problem. In Section~\ref{sec:analyticalcontinuation}, we will show that the
unknown functions of the functional equation can be analytically continued in
some larger domains, the union of which constitute the whole of $\mathbbm{C}^2$
except some cuts. This task will be based on the use of explicit analytical
continuation integral formulae. In Section~\ref{sec:additivecrossing}, we will show that the
unknown function $\tilde{U}'$ has a very particular behaviour about two of
its branch sets. We call this behaviour {\tmem{additive crossing}} and this
allows us to rewrite the spectral formulation in a very different way to
that of Section~\ref{sec:functionaleq}, which we will use in further work in order
to derive specific results.


\section{The quarter-plane functional problem} \label{sec:functionaleq}

\subsection{Formulation of the physical problem}\label{sec:formulationphysical}

Consider a 3D space $(x_1 , x_2 , x_3 )$ The scatterer is a quarter-plane 
(QP hereafter) $x_3 = 0$, $x_1 > 0$, $x_2 > 0$, as shown in Fig.~\ref{fig:geompluscarcasse} (left). 
Everywhere outside the scatterer the Helmholtz equation is valid for the total field: 
\begin{equation}
\Delta u^{\rm t} + k^2 u^{\rm t} = 0 ,
\label{eq:Helmholtz}
\end{equation}
where $k$ is a wavenumber parameter, such that $k^2 = 1 + i \epsilon$, 
$\epsilon$ being a small positive parameter. The total field is the sum of the incident and scattered fields: 
\[
u^{\rm t} = u^{\rm in} + u.
\]   
The incident field is a plane wave given by 
\[
u^{\rm in} = \exp \left\{ i \left( k_1 x_1 + k_2 x_2 - 
\sqrt{k^2 - k_1^2 - k_2^2} \, x_3 \right)  \right\}. 
\]    
The boundary condition is of inhomogeneous Dirichlet type on both faces of the QP:
\begin{equation}
u (x_1, x_2, 0) = - \exp \{ i (k_1 x_1 + k_2 x_2 ) \} 
\qquad \text{ if }  x_1 > 0  \text{ and } \ x_2 > 0.  
\label{eq:BCDi}
\end{equation}
We assume that ${\rm Re}(k_1) > 0$, ${\rm Re}(k_2) > 0$, and both values 
have a small positive imaginary part. 
The condition ${\rm Re}(k_1) > 0$, ${\rm Re}(k_2) > 0$ is rather restrictive, since it 
means that the incident wave cannot produce a doubly diffracted wave. However in this paper, we only have the ambition to introduce the theoretical framework that will allow us to make progress. The extension of this work to all possible incidence directions will be the topic of future work.

The scattered field $u$ should
also obey some radiation conditions at infinity in the limiting absorption
form, that is, in the case of $\epsilon > 0$, we need to have $u$ tends to $0$
as $r = (x_1^2 + x_2^2 + x_3^2)^{1 / 2}$ tends to $\infty$.


Considering the symmetry of the problem allows us to reduce in a standard way the problem for the scattered field to a boundary value problem in the half space with mixed boundary condition. Here in particular, we find that the scattered field is symmetric
and hence we can reduce the problem to $x_{1, 2} \in \mathbb{R}$ 
and $x_3 \in
\mathbb{R}^+$, subject to the Neumann boundary condition
\begin{equation}
   \frac{\partial u}{\partial x_3} (x_1, x_2, 0) = 0 \quad \text{ if } \quad
    x_1 < 0 \text{ or }  x_2 < 0  .
  \label{eq:BCN}
\end{equation}
Hence, in terms of regularity, we just need $u$ to be infinitely smooth for
$x_3 > 0$ and continuous as $x_3 \rightarrow 0^+$. For the problem to be well
posed, we also need to impose the so called Meixner conditions, or edge
conditions stipulating that close to the edges, the field should decay like
the square root of the distance to the edge. Hence there must exist two ``edge
functions'' $e_1$ and $e_2$ such that
\begin{equation}
    \left|u^{\rm t} (x_1, x_2, x_3)\right| < e_1 (x_1) (x_2^2 + x_3^2)^{1 / 4} 
    \text{ as }  (x_2^2
    + x_3^2)^{1 / 2} \rightarrow 0  \text{ for a fixed } x_1 > 0, 
\label{eqI0102}
\end{equation}    
\begin{equation}
    \left|u^{\rm t} (x_1, x_2, x_3)\right| < e_2 (x_2) (x_1^2 + x_3^2)^{1 / 4} 
    \text{ as }  (x_1^2
    + x_3^2)^{1 / 2} \rightarrow 0  \text{ for a fixed } x_2 > 0, 
\label{eqI0103}
\end{equation}        

There is another Meixner condition that should be imposed at the vertex.
There should exist a ``corner function'' $c$ such that
\begin{equation}
    \left|u^{\rm t} (x_1, x_2, x_3)\right| < c (\theta, \varphi) r^{\lambda}  
    \text{ as }  r \rightarrow 0 
    \text{ for some } \lambda > -1/2 ,
  \label{eq:vertex}
\end{equation}
where $(r, \theta, \varphi)$ are the usual spherical coordinates, $\theta$ and
$\varphi$ being the polar and azimuthal angles respectively. A function $u$
that satisfies (\ref{eq:Helmholtz})--(\ref{eq:vertex}) and the radiation
condition will be called a solution to the quarter-plane problem with a 
plane wave incidence.


\subsection{Derivation of the functional equation}

For any function $\phi (x_1, x_2, x_3)$, define its double Fourier
transform $\mathfrak{F} [\phi]$ by
\begin{eqnarray}
  \mathfrak{F} [\phi] (\xi_1, \xi_2, x_3) & = & \int_{- \infty}^{\infty}
  \int_{- \infty}^{\infty} \phi (x_1, x_2 , x_3) e^{i (\xi_1 x_1 + \xi_2 x_2)}
  \, \mathd x_1 \mathd x_2 . 
  \label{eq:defdoubleFourier} 
\end{eqnarray}
Subsequently, for any function $\tilde{\Phi}(\xi_1,\xi_2)$, we define its inverse Fourier transform $\mathfrak{F}^{-1} [\tilde{\Phi}]$ by 
\begin{eqnarray}
  \mathfrak{F}^{-1} \left[\tilde{\Phi}\right] (x_1, x_2)  &=& \frac{1}{4\pi^2} 
  \int_{- \infty}^{\infty}
  \int_{- \infty}^{\infty} 
  \tilde{\Phi} (\xi_1, \xi_2 ) e^{- i (\xi_1 x_1 + \xi_2 x_2)}
  \, \mathd \xi_1 \mathd \xi_2 .
  \label{eqI0105} 
\end{eqnarray}
Define also the \emph{kernel} function $\tilde{K}$  by
\begin{eqnarray}
  \tilde{K} (\xi_1, \xi_2)  \equiv (k^2 - \xi_1^2 - \xi_2^2)^{- 1 / 2} . \label{eq:kerneldefqp}
\end{eqnarray}
For real $\xi_1$, $\xi_2$ such that $\xi_1^2 + \xi_2^2 < 1$ the square root 
value is taken to be close to real positive.

Consider the domain $\Omega_{R, \varepsilon}$ consisting of the upper
hemisphere of radius $R > 0$ centred at the vertex, from which a sphere of
radius $\varepsilon > 0$ (around the vertex) has been removed. Hence the
boundary of $\Omega_{R, \varepsilon}$ consists of the upper surface of the big
sphere $S_R^+$ and a ``bottom lid'' made of two planar, and one spherical
surfaces as shown in Fig.~\ref{fig:geompluscarcasse} (centre).

\begin{figure}[h]
\centering
  \includegraphics[width=0.3\textwidth]{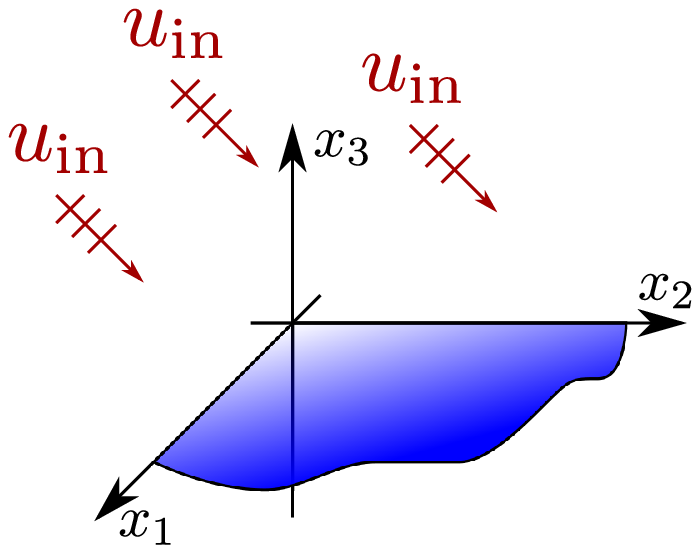}\quad\includegraphics[width=0.3\textwidth]{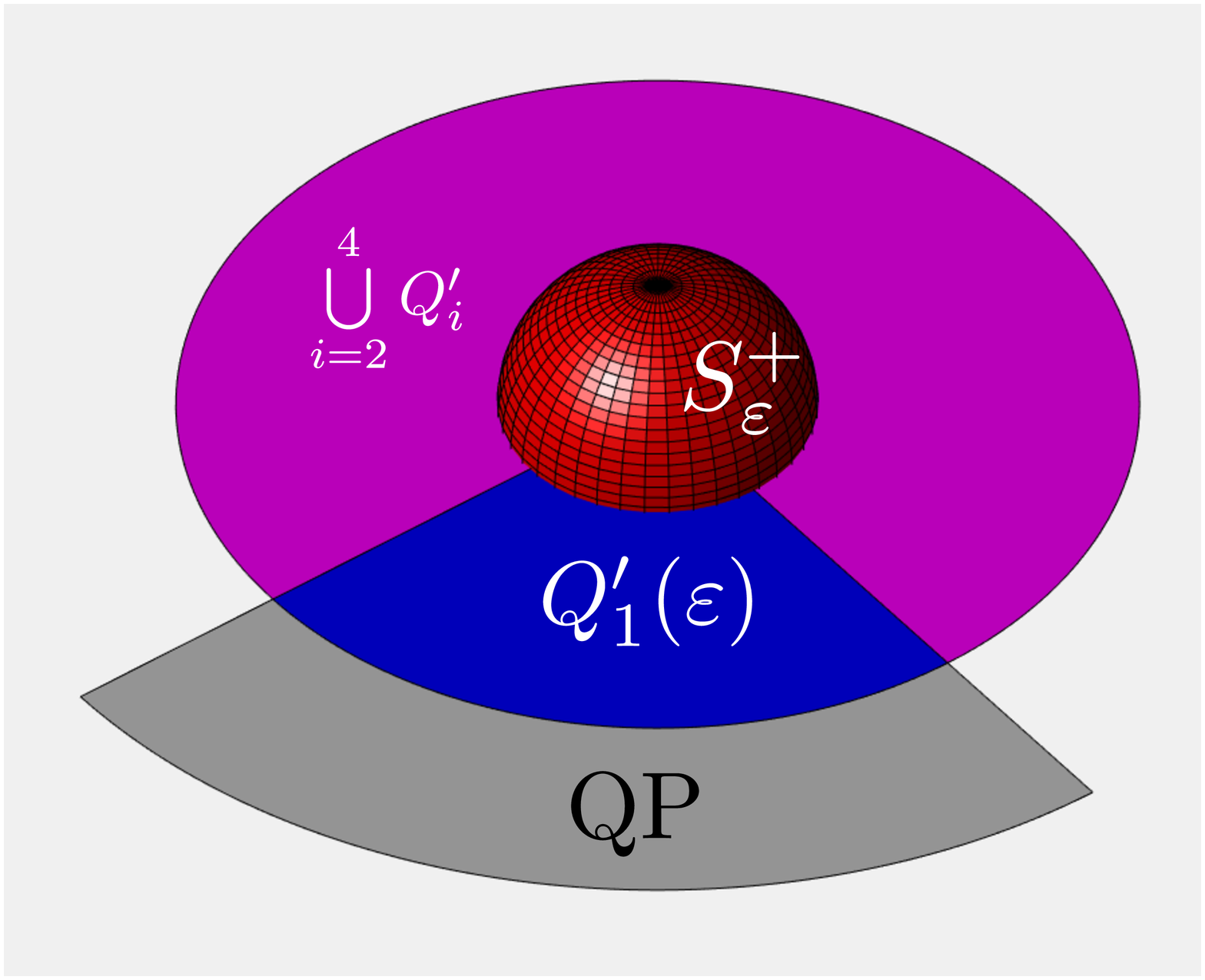}\quad\includegraphics[width=0.3\textwidth]{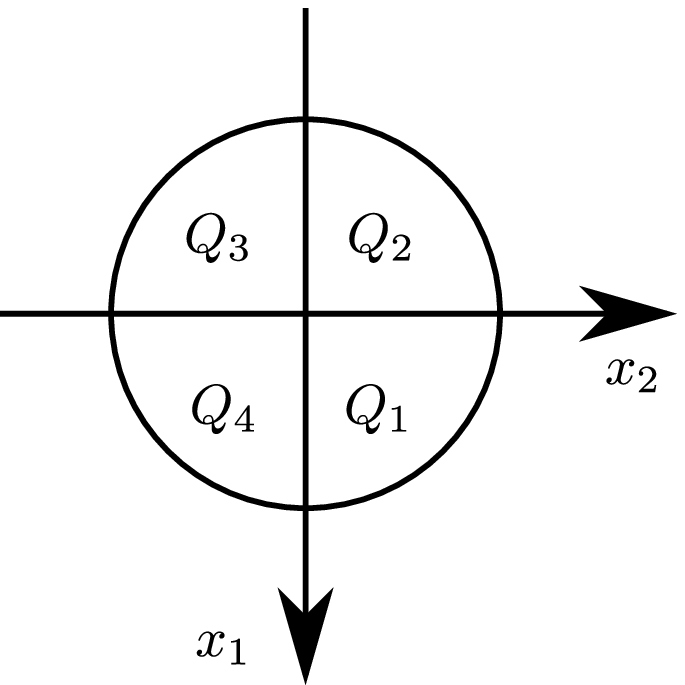}
  \caption{(left) Geometry of the problem. (centre) ``Bottom lid'' of
  $\Omega_{R, \varepsilon}$. The quarter-plane is in grey. (right)
  Illustration of the $Q_i$ quadrants.}\label{fig:geompluscarcasse}
\end{figure}

For any test function $w$ that satisfies the Helmholtz equation, we can take
the volume integral over $\Omega_{R, \varepsilon}$ of the quantity $w \Delta u
- u \Delta w$. On one hand it is zero because of the Helmholtz equation, and
on the other hand this can be expressed as a surface integral over $\partial
\Omega_{R, \varepsilon}$ by Green's theorem. Hence we get
\begin{eqnarray}
  \iint_{\partial \Omega_{R, \varepsilon}} \left( w \frac{\partial
  u}{\partial n} - u \frac{\partial w}{\partial n} \right) \, \mathd S & = & 0,
  \label{eq:initialGreen}
\end{eqnarray}
for any $\varepsilon > 0$, where $\partial / \partial n$ is the normal
derivative, where the unit normal ${\bf n}$ is chosen to be incoming
towards the volume $\Omega_{R, \varepsilon}$, and the test function $w$ is
chosen to be
\begin{eqnarray}
  w & = & e^{i (\xi_1 x_1 + \xi_2 x_2 + x_3 / \tilde{K} (\xi_1, \xi_2))}, \label{eq:defofw}
\end{eqnarray}
for some $\xi_1$ and $\xi_2$. First, we take real $\xi_1$ and $\xi_2$ 
and then continue the results analytically. 
Note that for all real $\xi_1$, $\xi_2$ the value $\tilde K^{-1} (\xi_1, \xi_2)$
has a positive imaginary part. It is small (due to $\epsilon$) 
for $\xi_1^2 + \xi_2^2 < 1$, and not small for $\xi_1^2 + \xi_2^2 > 1$. 
Thus, $w$ decays exponentially as $x_3 \to \infty$. 
 
Note that there is no need to exclude the edges
of the quarter-plane from our volume of integration since thanks to Meixner
conditions the integrals are convergent there. 
Hence, thanks to the exponential decay of $w$, for any $\epsilon > 0$, we have
\begin{eqnarray}
  \lim_{R \rightarrow \infty} \iint_{S_R^+} \left( w \frac{\partial
  u}{\partial n} - u \frac{\partial w}{\partial n} \right) \, \mathd S & = &
  0.  \label{eq:limitSRzero}
\end{eqnarray}

Hence, if we call $Q_i' (\varepsilon)$ the part of the quadrant $Q_i$ (see
Figure \ref{fig:geompluscarcasse}) from which the disk of radius $\varepsilon$
is excluded, and $S_{\varepsilon}^+$ the upper small spherical surface around
the vertex, taking the limit as $R \rightarrow \infty$ in
(\ref{eq:initialGreen}), using the fact 
that
${\bf n}={\bf e}_r$ on $S_{\varepsilon}^+$ and
${\bf n}={\bf e}_{x_3}$ on $Q_i'$ leads to
\begin{eqnarray}
  \iint_{S_{\varepsilon}^+} \left( w \frac{\partial u}{\partial r} - u
  \frac{\partial w}{\partial r} \right) \, \mathd S+\iint_{\bigcup\limits_{i=1}^{4} Q_i'} \left(w
  \frac{\partial u}{\partial x_3}-u \frac{\partial w}{\partial x_3}\right)\, \mathd S & = & 0,
  \label{eq:secondGreenepsilon}
\end{eqnarray}
for any $\varepsilon > 0$.

Due to the vertex Meixner condition, one can take the limit $\varepsilon \rightarrow 0$
in (\ref{eq:secondGreenepsilon}). As a result, the integral along $S_{\varepsilon}^+$ goes to zero.
The integrals in the second term in the left
of (\ref{eq:secondGreenepsilon}) become Fourier transforms of $u$ and $\ptl u/ \ptl x_3$. Define 
\begin{eqnarray}
  \tilde{U} (\xi_1, \xi_2) =\mathfrak{F} [u] (\xi_1, \xi_2, 0^+) & \tmop{and}
  & \tilde{W} (\xi_1, \xi_2) =\mathfrak{F} \left[ \frac{\partial u}{\partial
  x_3} \right] (\xi_1, \xi_2, 0^+) .  
  \label{eq:defUtilde}
\end{eqnarray}
It follows from (\ref{eq:secondGreenepsilon})
that 
\begin{equation}
  \tilde{K} (\xi_1, \xi_2) \tilde{W}(\xi_1,\xi_2) = 
  i \tilde{U} (\xi_1, \xi_2) , 
  \label{eq:functionalequation}
\end{equation}
which is the main functional equation for the QP diffraction problem. 
Indeed, this agrees with the functional equation from \cite{radlow} and from other 
papers dedicated to the problem. 

As it is, equation (\ref{eq:functionalequation}) is defined for real $\xi_1 , \xi_2$. 
The real plane $(\xi_1 , \xi_2)$, on which $\tilde U$ and $\tilde W$
are defined by (\ref{eq:defUtilde}) is said to belong to the the ``physical sheet''. Later on we 
will study analytical continuation of $\tilde U$ and $\tilde W$ to other ``sheets''.


\subsection{1/4-based and 3/4-based functions}

Equation (\ref{eq:functionalequation}) contains two unknown functions. Our aim is to convert this equation into a formulation close 
to the traditional Wiener--Hopf equation. For this, we need to take into account some additional properties 
of the unknown functions $\tilde U$ and $\tilde W$. We call these properties 
1/4-basedness and 3/4-basedness. 

Define the quadrants $Q_1, Q_2, Q_3, Q_4$ 
of the plane $(x_1, x_2)$
as it is shown in Fig.~\ref{fig:geompluscarcasse} (right).  Now, upon writing $\boldsymbol{x}=(x_1,x_2)$ and $\boldsymbol{\xi}=(\xi_1,\xi_2)$, note that for any function $\phi(\boldsymbol{x})$, we can write $\mathfrak{F}[\phi]=\mathfrak{F}_{1/4}[\phi]+\mathfrak{F}_{3/4}[\phi]$, where the Fourier operators $\mathfrak{F}_{1/4}$ and $\mathfrak{F}_{3/4}$ are defined by
\begin{eqnarray*}
\mathfrak{F}_{1/4}[\phi](\boldsymbol{\xi})=\iint_{Q_1} \phi(\boldsymbol{x}) e^{i\boldsymbol{x}\cdot\boldsymbol{\xi}}\,\mathd \boldsymbol{x} & \text{and} & \mathfrak{F}_{3/4}[\phi](\boldsymbol{\xi})=\iint_{\bigcup\limits_{i=2}^{4} Q_i} \phi(\boldsymbol{x}) e^{i\boldsymbol{x}\cdot\boldsymbol{\xi}}\,\mathd \boldsymbol{x},
\end{eqnarray*}
and called $1/4$-range and $3/4$-range Fourier transform respectively.
According to (\ref{eq:BCN}), the function $\tilde W$ is a Fourier transform of a function that
is not equal to zero only on the quadrant $Q_1$. Similarly, according to (\ref{eq:BCDi}), the function $\tilde U$ can be represented as follows: 
\begin{equation}
\tilde U(\xi_1 , \xi_2) = \frac{1}{(\xi_1 + k_1) (\xi_2 + k_2)} + \tilde U'(\xi_1 , \xi_2).
\label{eqI0104}
\end{equation} 
The first term is the $\mathfrak{F}_{1/4}$ transform of (\ref{eq:BCDi}), while 
$\tilde U'$
is the $\mathfrak{F}_{3/4}$ transform of $u(x_1 , x_2 , 0)$.

\begin{definition}
A function $\tilde{\Phi}(\xi_1 , \xi_2)$ is called \textbf{1/4-based\/} if its inverse Fourier transform $\mathfrak{F}^{-1} [\tilde{\Phi}] (x_1 , x_2)$
is equal to zero if $x_1 < 0$ or $x_2 < 0$ (i.e.\ on quadrants $Q_2, Q_3, Q_4$). In other words, $\tilde{\Phi}$ is 1/4-based if there exists a function $\phi$ such that $\tilde{\Phi}=\mathfrak{F}_{1/4}[\phi]$. 
\end{definition}

\begin{definition} 
A function $\tilde{\Phi}(\xi_1 , \xi_2)$ is called \textbf{3/4-based\/} if its inverse Fourier transform $\mathfrak{F}^{-1} [\tilde{\Phi}] (x_1 , x_2)$
is equal to zero if $x_1 > 0$ and $x_2 > 0$ (i.e.\ on quadrant $Q_1$). In other words, $\tilde{\Phi}$ is 3/4-based if there exists a function $\phi$ such that $\tilde{\Phi}=\mathfrak{F}_{3/4}[\phi]$. 
\end{definition}
Thus, it is clear from the discussion above, that $\tilde W$ is 1/4-based and $\tilde U'$ is 3/4-based. As discussed briefly in the introduction, the concept of a 1/4-based function causes no problem. Similarly to the 1D Wiener--Hopf method, 
one can formulate the following theorem.  

\begin{theorem}
\label{th:FTth2.1}
Let us consider a function $\phi(x_1 , x_2)$ having its Fourier transform equal to zero outside $Q_1$, and growing no faster than a polynomial for large $x_1, x_2$.
  Then $\mathfrak{F} [\phi](\xi_1 , \xi_2)$ 
can be analytically continued to the domain ${\rm Im}(\xi_1) > 0$, ${\rm Im}(\xi_2) > 0$ and has no singularities there. 
Reciprocally, if a function $\tilde{\Phi}(\xi_1 , \xi_2)$ can be  analytically continued to the 
domain ${\rm Im}(\xi_1) > 0$, ${\rm Im}(\xi_2) > 0$ and has no singularities there. 
Then $\tilde{\Phi}$ is 1/4-based. 
\end{theorem}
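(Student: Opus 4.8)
The statement is the two--variable analogue of the classical one--dimensional Paley--Wiener/Wiener--Hopf fact, and the plan is to prove it by running the usual one--variable contour arguments one coordinate at a time. I read the hypothesis of the direct part as: $\phi$ vanishes whenever $x_1<0$ or $x_2<0$ (so that $\mathfrak{F}[\phi]=\mathfrak{F}_{1/4}[\phi]$) and is polynomially bounded, and I treat the converse under the natural supplementary assumption that $\tilde{\Phi}$ is well enough behaved at infinity for its inverse Fourier transform (\ref{eqI0105}) to make sense. The two claims then become: (i) support in $\overline{Q_1}$ implies $\mathfrak{F}[\phi]$ is holomorphic and singularity--free on the polydomain $\mathrm{Im}(\xi_1)>0$, $\mathrm{Im}(\xi_2)>0$; and (ii) holomorphy and absence of singularities there imply $\mathfrak{F}^{-1}[\tilde{\Phi}]$ vanishes off $Q_1$, i.e.\ $\tilde{\Phi}$ is $1/4$--based.

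For (i): on $Q_1$ one has $x_1,x_2\ge 0$, so for $\boldsymbol{\xi}$ in the open polydomain $|e^{i\boldsymbol{x}\cdot\boldsymbol{\xi}}|=e^{-(\mathrm{Im}\,\xi_1)x_1-(\mathrm{Im}\,\xi_2)x_2}$ decays exponentially, which dominates the polynomial growth of $\phi$; hence $\mathfrak{F}[\phi](\boldsymbol{\xi})=\iint_{Q_1}\phi(\boldsymbol{x})e^{i\boldsymbol{x}\cdot\boldsymbol{\xi}}\,\mathd\boldsymbol{x}$ converges absolutely and locally uniformly on $\{\mathrm{Im}(\xi_1)>0,\ \mathrm{Im}(\xi_2)>0\}$. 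Holomorphy in each of $\xi_1,\xi_2$ separately then follows by differentiation under the integral sign (or Morera plus Fubini), and joint holomorphy follows from Hartogs' separate--analyticity theorem (or Osgood's lemma, local boundedness being immediate from the absolute convergence). Since the same convergent integral represents the function throughout the polydomain, there are no singularities. This is precisely the 1D argument transplanted onto the product geometry.

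For (ii): write $\phi=\mathfrak{F}^{-1}[\tilde{\Phi}]$ via (\ref{eqI0105}) and push the two integration contours up into the polydomain, replacing $\mathbb{R}$ by the horizontal lines $\mathrm{Im}(\xi_1)=a>0$ and $\mathrm{Im}(\xi_2)=b>0$. This shift is legitimate by a two--dimensional Cauchy argument using that $\tilde{\Phi}$ is holomorphic on the open polydomain, continuous up to its distinguished boundary $\mathbb{R}^2$, and decays as $\mathrm{Re}(\xi_j)\to\pm\infty$ so that the vertical connecting pieces drop out; the value of the double integral is unchanged because no singularities are crossed. On the shifted contour $|e^{-i(\xi_1x_1+\xi_2x_2)}|=e^{ax_1+bx_2}$, so if $x_1<0$ we fix any convenient $b>0$ and let $a\to+\infty$: the prefactor $e^{ax_1}\to 0$ while the remaining integrand stays bounded and integrable uniformly in $a$, forcing $\phi(x_1,x_2)=0$. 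Exchanging the roles of the two variables gives $\phi(x_1,x_2)=0$ whenever $x_2<0$ as well, so $\mathfrak{F}^{-1}[\tilde{\Phi}]$ is supported in $\overline{Q_1}$ and $\tilde{\Phi}$ is $1/4$--based.

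The delicate point, and the one I would treat most carefully, is the rigour of this last step: the theorem as stated does not quantify the decay of $\tilde{\Phi}$, yet to justify both the contour shifts and the limit $a\to+\infty$ (resp.\ $b\to+\infty$) one needs, say, integrability of $\tilde{\Phi}$ on the lines $\mathrm{Im}(\xi_1)=a$ uniformly for $a\ge a_0>0$ (and symmetrically in $\xi_2$), together with the vanishing of the connecting segments at $\mathrm{Re}\to\pm\infty$. For the spectral functions of interest here, which grow only polynomially, the clean way to make this rigorous is to read (\ref{eqI0105}) and the conclusion in the sense of tempered distributions and invoke the Paley--Wiener--Schwartz circle of ideas, or alternatively to prove the statement first for a dense class of rapidly decaying $\phi$ and extend by duality. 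I would present the absolutely convergent case in full and merely indicate this upgrade, just as the one--variable prototype does (see e.g.\ \cite{Noble}).
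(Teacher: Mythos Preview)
The paper does not actually supply a proof of this theorem; it is stated as a standard fact (the two--variable analogue of the classical Paley--Wiener/Wiener--Hopf support--analyticity correspondence) and the text moves on immediately after the statement. Your proposal is therefore not to be compared against an existing argument but assessed on its own, and as such it is the natural and correct one: absolute convergence of $\iint_{Q_1}\phi\,e^{i\boldsymbol{x}\cdot\boldsymbol{\xi}}\,\mathd\boldsymbol{x}$ for $\mathrm{Im}(\xi_{1,2})>0$ gives the direct part, and the contour--shift argument in each variable (which is exactly the 1D half--line Fourier argument applied twice) gives the converse. Your caveat about the unstated decay hypothesis on $\tilde{\Phi}$ in the converse is well taken and is the only genuine subtlety; handling it distributionally or by density, as you suggest, is the standard remedy.
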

Thus, there is a simple criterion of 1/4-basedness displayed in terms of analyticity. The known problems with building a 2D analog of the Wiener--Hopf method are connected with the concept of 3/4-basedness. Namely, there is no simple criterion of 3/4-basedness. 
An obvious statement that $\tilde{\Phi}(\xi_1 , \xi_2)$ is 3/4-based if and only if it can be represented as 
\[
\tilde{\Phi} (\xi_1, \xi_2) = 
\tilde{\Phi}_1 (-\xi_1, \xi_2) +
\tilde{\Phi}_2 (\xi_1, -\xi_2) +
\tilde{\Phi}_3 (-\xi_1, -\xi_2), 
\]
where $\tilde{\Phi}_1, \tilde{\Phi}_2, \tilde{\Phi}_3$ are 1/4-based functions is not particularly useful on a practical level since it leads to the introduction of three unknown functions instead of one. 

The present paper is an attempt to formulate a criterion of 3/4-basedness. We will show in Section \ref{sec:additivecrossingmain} that it is connected with the idea of additive crossing of branch lines.     
 

\subsection{Formulation of the functional problem}
\label{sec:functionalproblem}

Let us formulate the functional problem, i.e.\ the problem for the unknown 
function $\tilde W (\xi_1 , \xi_2)$, which would be equivalent to the initial physical problem for 
$u(x_1 , x_2, x_3)$ given in Section \ref{sec:formulationphysical}. We assume that the functional equation (\ref{eq:functionalequation})  is valid, and, using (\ref{eqI0104}), we define $\tilde{U}'$ as follows 
\begin{equation}
\tilde U '(\xi_1, \xi_2) \equiv -i \tilde K (\xi_1 , \xi_2) \, \tilde W (\xi_1 , \xi_2) 
- \frac{1}{(\xi_1 + k_1)(\xi_2 + k_2)} .
\label{eqI0106}
\end{equation}
The functional problem can be formulated in the form of the following theorem. 

\begin{theorem} \label{th:formulationSec2}
Let the function $\tilde W(\xi_1 , \xi_2)$  have the following properties:

\begin{enumerate}

\item
$\tilde W(\xi_1 , \xi_2)$ is 1/4-based.

\item
$\tilde U'(\xi_1, \xi_2)$, as defined by (\ref{eqI0106}), is 3/4-based. 

\item
There exist functions $E_1(\xi_1)$ and $E_2(\xi_2)$ defined for 
complex $\xi_1$ and $\xi_2$, such that  
\begin{equation}
|\tilde W (\xi_1 , \xi_2)| < E_1 (\xi_1) |\xi_2|^{-1/2} 
\qquad \mbox{as} \quad |\xi_2| \rightarrow \infty, \quad {\rm Im}(\xi_2) > 0,
\label{eqI0107}
\end{equation}
\begin{equation}
|\tilde W (\xi_1 , \xi_2)| < E_2 (\xi_2) |\xi_1|^{-1/2} 
\qquad \mbox{as} \quad |\xi_1| \rightarrow \infty, \quad {\rm Im}(\xi_1) > 0 .
\label{eqI0108}
\end{equation}

\item
There exists a function $C(\beta, \psi_1, \psi_2)$ defined for $0<\beta<\pi/2$ and\\ ${0<\psi_{1,2}<\pi}$ such that for real $\Lambda$
\begin{equation}
|\tilde W(\xi_1 , \xi_2)| < C(\beta, \psi_1, \psi_2) \Lambda^{-1-\lambda},
\qquad
\lambda > -1/2,
\label{eqI0109}
\end{equation}
where $\xi_1$ and $\xi_2$ are parametrised as follows for large real $\Lambda$:
\begin{equation}
\xi_1 = \Lambda e^{i \psi_1} \cos(\beta), 
\qquad 
\xi_2 = \Lambda e^{i \psi_2} \sin(\beta), 
\label{eqI0110}
\end{equation}
\end{enumerate}
Then the field $u(x_1 , x_2, x_3)$ defined by 
\begin{equation}
u(x_1 , x_2, x_3) =
-i\mathfrak{F}^{-1}[\tilde K \, \tilde W \, 
\exp \{ i |x_3| / \tilde K \}] (x_1 , x_2) .
\label{eqI0111}
\end{equation}
obeys all conditions of the initial physical problem and is hence a solution to the quarter-plane problem. 
\end{theorem}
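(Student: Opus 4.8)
The plan is to verify that the field $u$ defined by the inverse-Fourier formula \eqref{eqI0111} satisfies each requirement of the physical problem of Section~\ref{sec:formulationphysical}, in roughly the following order: (i) well-posedness of the integral and the Helmholtz equation, (ii) the radiation/decay condition at infinity, (iii) the Dirichlet condition on the quarter-plane and the Neumann condition on the complement, and (iv) the two edge Meixner conditions \eqref{eqI0102}--\eqref{eqI0103} and the vertex condition \eqref{eq:vertex}. The growth hypotheses (3) and (4) on $\tilde W$ are precisely what makes (i), (ii) and (iv) work, while properties (1) and (2) — the $1/4$- and $3/4$-basedness — encode the boundary conditions in (iii).

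First I would check that the integrand in \eqref{eqI0111} is absolutely integrable for $x_3>0$: the factor $\exp\{i|x_3|/\tilde K\}$ decays exponentially in $|\boldsymbol{\xi}|$ on the physical (real) plane because $\mathrm{Im}(1/\tilde K)>0$ there (as noted after \eqref{eq:defofw}), and $\tilde K\,\tilde W$ grows at most polynomially by \eqref{eqI0109}--\eqref{eqI0110}; hence $u$ is smooth for $x_3>0$ and one may differentiate under the integral sign. Since $w=e^{i(\xi_1x_1+\xi_2x_2+x_3/\tilde K)}$ satisfies the Helmholtz equation \eqref{eq:Helmholtz} for each $\boldsymbol{\xi}$ (this is exactly the defining relation $k^2-\xi_1^2-\xi_2^2=1/\tilde K^2$), so does $u$. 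Letting $x_3\to\infty$, the exponential factor forces $u\to 0$, giving the radiation condition in the limiting-absorption form. For the boundary values on $x_3=0^+$: from \eqref{eqI0106} one has $\tilde K\tilde W\exp\{i|x_3|/\tilde K\}\big|_{x_3=0^+}=\tilde K\tilde W= i^{-1}\big(\tilde U'+\tfrac{1}{(\xi_1+k_1)(\xi_2+k_2)}\big)$, so $u(x_1,x_2,0^+)=\mathfrak F^{-1}[\tilde U']+\mathfrak F^{-1}[(\xi_1+k_1)^{-1}(\xi_2+k_2)^{-1}]$; the second term is $-\mathbbm 1_{Q_1}e^{i(k_1x_1+k_2x_2)}$ — wait, more precisely $\mathfrak F^{-1}$ of $\tfrac{1}{(\xi_1+k_1)(\xi_2+k_2)}$ is supported on $Q_1$ and equals $e^{i(k_1x_1+k_2x_2)}$ times a constant fixed by \eqref{eqI0105} — and by property (2) $\mathfrak F^{-1}[\tilde U']$ vanishes on $Q_1$, so on $Q_1$ the Dirichlet condition \eqref{eq:BCDi} holds, while on $Q_2\cup Q_3\cup Q_4$ the first term disappears. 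For the Neumann condition \eqref{eq:BCN}, differentiate \eqref{eqI0111} in $x_3$: $\partial_{x_3}u|_{0^+}=\mathfrak F^{-1}[\tilde W]$ (the derivative of $|x_3|$-even part — here one must be slightly careful about the sign/absolute value, but the construction is arranged so that $\partial_{x_3}u(x_1,x_2,0^+)=\mathfrak F^{-1}[\tilde W](x_1,x_2)$), which by property (1) is supported on $Q_1$ and hence vanishes on $Q_2\cup Q_3\cup Q_4$.

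The remaining, and I expect most delicate, step is extracting the near-edge and near-vertex asymptotics \eqref{eqI0102}--\eqref{eq:vertex} from the decay estimates \eqref{eqI0107}--\eqref{eqI0110}. The idea is a Watson-lemma / stationary-phase type argument: near the edge $x_1>0$, $(x_2^2+x_3^2)^{1/2}\to 0$, one freezes $x_1$, performs the $\xi_1$-integral first (using \eqref{eqI0108} for convergence) to obtain an effective one-variable problem in $(\xi_2,x_2,x_3)$, and then the $|\xi_2|^{-1/2}$ bound \eqref{eqI0107} yields, by the standard Fourier-transform/Abelian argument, a field bounded by $(x_2^2+x_3^2)^{1/4}$ times a function of $x_1$ — this is the classical correspondence ``$|\xi|^{-1/2}$ decay $\leftrightarrow$ square-root edge behaviour.'' The vertex condition is handled analogously but now globally in $(\xi_1,\xi_2)$: using the parametrisation \eqref{eqI0110} one rescales $\boldsymbol\xi=\Lambda\,\hat{\boldsymbol\xi}$, $r\,\Lambda=\rho$, and the bound \eqref{eqI0109} with exponent $-1-\lambda$, $\lambda>-1/2$, translates under the two-dimensional inverse transform into $|u^{\rm t}|\lesssim r^{\lambda}$ as $r\to 0$, which is \eqref{eq:vertex}. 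The technical obstacles here are (a) making the contour deformations from the real plane into $\mathrm{Im}(\xi_j)>0$ legitimate — this needs the analyticity statement of Theorem~\ref{th:FTth2.1} together with the growth bounds so that arcs at infinity are negligible — and (b) justifying that the order of integration may be exchanged and that the $\varepsilon>0$ regularisation can be removed at the end; these are the points where I would spend the most care, whereas the algebraic identities linking $\tilde W$, $\tilde U$, $\tilde U'$ and the boundary data are essentially bookkeeping already done in \eqref{eqI0104}--\eqref{eqI0106}.
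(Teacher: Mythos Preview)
Your proposal is correct and follows essentially the same approach as the paper's own proof: verify that \eqref{eqI0111} satisfies Helmholtz by construction, obtain the radiation condition from the positive imaginary part of $\tilde K^{-1}$, read off the Neumann and Dirichlet boundary conditions from the $1/4$- and $3/4$-basedness of $\tilde W$ and $\tilde U'$ respectively, and attribute the edge and vertex Meixner conditions to the decay hypotheses (3) and (4). In fact you supply considerably more detail than the paper, which dispatches each item in a single sentence and does not spell out the Abelian/Watson-type argument linking the $|\xi|^{-1/2}$ and $\Lambda^{-1-\lambda}$ bounds to the $r^{1/4}$ and $r^{\lambda}$ behaviours; your identification of (a) the contour-deformation legitimacy and (b) the exchange of limits as the genuine technical points is accurate, though the paper simply asserts the correspondence without carrying it out.
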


\begin{proof}
The Ansatz (\ref{eqI0111}) obeys 
(\ref{eq:Helmholtz}) in $x_3 \ne 0$ by construction. The radiation 
condition is fulfilled because of the imaginary part of $k$ and 
positive imaginary part of $\tilde K^{-1}$. The first condition of the theorem is responsible for the Neumann boundary condition (\ref{eq:BCN}) outside the~QP. 
The second condition yields the Dirichlet boundary condition (\ref{eq:BCDi}) 
on the~QP. The third condition corresponds to the edge conditions (\ref{eqI0102}) and (\ref{eqI0103}). 
Obviously, the functions $E_1$ and $E_2$ are Fourier images of $e_1$ and $e_2$.
Finally, the fourth condition provides the vertex condition\footnote{It is possible to derive a connection formula between $C$ and $c$ but it is quite technical and beyond the purpose of this work} (\ref{eq:vertex}). 
\end{proof}



\section{Analytical continuation of $\tilde W$} \label{sec:analyticalcontinuation}

\subsection{Motivation}\label{sec:motivation}

We pay a considerable attention to the possibility of continuing the unknown function to 
some well-defined Riemann manifold. 
Though in the 1D case there is no clear path from the Riemann surface to the actual solution, on the intuitive level, we strongly believe that the possibility to solve the usual 1D Wiener--Hopf problem is linked with the possibility to study the analytical continuation of the 
unknown function {\em without solving the problem}. We illustrate below what we mean by this. 

Consider a sample 1D problem of the form   
\begin{equation}
K(\xi) \, W_+ (\xi) + U_-(\xi) = T(\xi), 
\label{eq:ex01}
\end{equation}
where $\xi$ is a scalar complex variable, $K(\xi)$ is a known coefficient, which is an {\em algebraic function\/}, $W_+$ and $U_-$ are unknown functions analytic in the upper and lower half-plane respectively and $T$ is a known right-hand side, which is a {\em rational function}. Equation (\ref{eq:ex01}) can be rewritten as 
\begin{equation}
 W_+ (\xi)  = K^{-1}(\xi) \,T(\xi) - K^{-1}(\xi) \,U_-(\xi)
\label{eq:ex02}
\end{equation}
and
\begin{equation}
 U_-(\xi) = T(\xi) - K(\xi) \, W_+ (\xi), 
\label{eq:ex03}
\end{equation}
Note that $W_+$ is naturally defined in the upper half-plane, while the right-hand side of 
(\ref{eq:ex02}) is naturally defined in the lower half-plane. Thus, (\ref{eq:ex02}) can be used to continue $W_+$ into the lower half-plane. Similarly, (\ref{eq:ex03}) can be used to continue $U_-$ into the upper half-plane. 

Let $W_+^\star(\xi)$ be the value of $W_+$ at some real $\xi$ of the ``physical sheet'', i.e.\ the value that can be used for computation of some wave field. 
Similarly, let $U_-^\star (\xi)$ be the value of $U_-$ on the physical sheet.

Let us continue $W_+$ along some loop $\sigma$ starting and ending at~$\xi$. The result will be denoted by $W_+(\xi; \sigma)$.
Fix also the physical sheet of the argument of $K$ by $K^\star(\xi)$ and denote different branches of it by $K(\xi ; \sigma)$.

Assume that $\sigma$ is homotopic to a concatenation of loops 
\[
\sigma = \sigma_1^- \sigma_2^+ \sigma_3^- \dots 
\]
where the left loop is passed first, loops $\sigma_j^+$ are located in the upper half-plane, and $\sigma_j^-$ are located in the lower half-plane. See Fig.~\ref{fig:loopsigma} for an illustration.

\begin{figure}[h]
\centering
\includegraphics[width=0.4\textwidth]{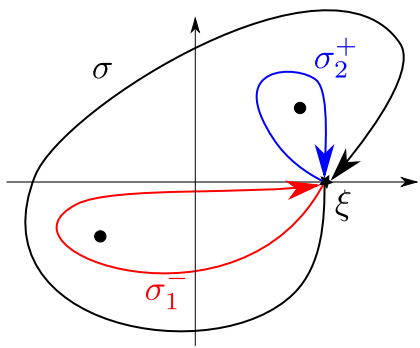}
\caption{Illustration of a loop $\sigma$ homotopic to $\sigma_1^- \sigma_2^+$ in the case of a kernel $K$ with two branch points denoted by $\bullet$}
\label{fig:loopsigma}
\end{figure}
 
It is hence possible to construct $W_+(\xi; \sigma)$ by iterations. First, use (\ref{eq:ex02}) for the loop $\sigma_1^-$ and get 
\begin{equation}
 W_+ (\xi ; \sigma_1^-)  = 
 K^{-1}(\xi ; \sigma_1^-) \,T(\xi) - K^{-1}(\xi ; \sigma_1^-) \,U_-^\star(\xi).
\label{eq:ex04}
\end{equation}
Combining this with (\ref{eq:ex03}) obtain 
\begin{equation}
 W_+ (\xi ; \sigma_1^-)  = 
K^{-1}(\xi ; \sigma_1^-)\, K^\star(\xi) \,W_+^\star(\xi).
\label{eq:ex04a}
\end{equation}
Continuing (\ref{eq:ex04a}) along $\sigma_2^+$, obtain
\begin{equation}
W_+ (\xi ; \sigma_1^- \sigma_2^+)  = K^{-1}(\xi ; \sigma_1^- \sigma_2^+) 
\, K(\xi; \sigma_2^+) \,
W_+^\star(\xi).
\label{eq:ex07}
\end{equation}

This process can be continued, providing $W_+ (\xi ; \sigma_1^- \sigma_2^+ \sigma_3^-)$,
$W_+ (\xi ; \sigma_1^- \sigma_2^+ \sigma_3^- \sigma_4^+)$, etc.
At each step the value  $W_+(\xi ; \sigma)$ is  $W_+^\star(\xi)$ multiplied by some values of 
$K$ and $K^{-1}$ on different branches.

Equations (\ref{eq:ex04}) and (\ref{eq:ex07}) are examples of analytical continuation formulae. 
These formulae provide information about the analytical structure of the 
function $W_+$ even without solving the Wiener--Hopf equation. Namely, one can reveal the structure of the Riemann surface of $W_+$ and find
all singularities on this Riemann surface.

The aim of this section is to build formulae of analytical continuation for the 
2D Wiener--Hopf problem formulated above. Unfortunately, in the 2D case such formulae 
are considerably more complicated, namely, they include integral operators. Still, they are helpful and provide an important information about 3/4-based functions.

\subsection{Primary formulae for analytical continuation}\label{sec:formulae}


The two unknown functions are, formally, $\tilde W$ and $\tilde U'$. As we noted above, the natural domain of analyticity of the 1/4-based function $\tilde W (\xi_1 , \xi_2)$ is ${\rm Im}(\xi_1) > 0$, ${\rm Im}(\xi_2) > 0$. 
In our consideration we use an important ``physical'' conjecture that the field has 
an exponential decay due to the presence of the imaginary part of $k$ corresponding to 
absorption in the medium.  
Namely, we estimate the field as 
\begin{equation} 
|u(x_1, x_2 , 0^+) | < C_{\epsilon} \exp\{- 4\kappa \sqrt{x_1^2 + x_2^2}\}
\quad
\mbox{ as } 
\quad
\sqrt{x_1^2 + x_2^2} \to \infty
\label{eq:estimation}
\end{equation}
for 
\[
\kappa = \tfrac{1}{4} \, {\rm min}\, (\epsilon/2, {\rm Im}(k_1), {\rm Im}(k_2)). 
\]
Thus, since $2\sqrt{x_1^2+x_2^2}>x_1+x_2$, one can easily prove that $\tilde W(\xi_1 , \xi_2)$ is actually analytic in the 
wider domain 
\[
{\rm Im}(\xi_1) > -2 \kappa ,  \qquad {\rm Im}(\xi_2) > -2 \kappa.
\]
Note that nothing can be said {\em a priori\/} about the domain 
of analyticity of the second unknown function $\tilde U' (\xi_1 , \xi_2)$, which is 
3/4-based. 



Let us now introduce the important function $\gamma$ by 
\begin{equation}
\gamma(\xi_1 , \xi_2) \equiv \sqrt{\sqrt{k^2 - \xi_1^2} + \xi_2},
\label{eqI0112}
\end{equation}
for which the ``arithmetic'' branches of the square roots on the ``physical sheet'' are considered, i.e. the value of 
a square root for a real positive argument is positive real. 
Note that $\gamma$ participates in a multiplicative factorisation 
of $\tilde K$:
\begin{equation}
\tilde K (\xi_1 , \xi_2) = \frac{1}{\gamma(\xi_1 , \xi_2) \, \gamma(\xi_1 , -\xi_2)} = 
\frac{1}{\gamma(\xi_2 , \xi_1) \, \gamma(\xi_2 , -\xi_1)} .
\label{eqI0113}
\end{equation}
Using this function, we can formulate our first analytical continuation formulae.
\begin{theorem}\label{th:th3}
For $|\textup{Im}(\xi_{1,2})|<\kappa$, $\tilde W$ obeys the following relations: 
\begin{eqnarray}
\tilde W(\xi_1 , \xi_2) &=& 
\frac{ \gamma(\xi_1, \xi_2)}{4\pi^2}\!\!\!\!\!
\int \limits_{-\infty - i \kappa}^{\infty - i \kappa}\!\!\!\!\!\!\mathd \xi_2'  
\int \limits_{-\infty + i \kappa}^{\infty + i \kappa}\!\!\!\!\!\!\mathd \xi_1' 
\,\frac{
\gamma(\xi_1 , -\xi_2') \, \tilde K (\xi_1' , \xi_2')\, \tilde W(\xi_1' , \xi_2')}{
(\xi_1' - \xi_1) (\xi_2' - \xi_2)} \nonumber \\ 
&+&
\frac{i \gamma(\xi_1, \xi_2) \, \gamma(\xi_1, k_2)}{(\xi_1 + k_1)(\xi_2 + k_2)}
\label{eq:stupidformula1} \\
\tilde W(\xi_1 , \xi_2) &=&
\frac{ \gamma(\xi_2, \xi_1)}{4\pi^2}\!\!\!\!\!
\int \limits_{-\infty - i \kappa}^{\infty - i \kappa}  
\!\!\!\!\!\!\mathd \xi_1'
\int \limits_{-\infty + i \kappa}^{\infty + i \kappa} 
\!\!\!\!\!\!\mathd \xi_2' 
\,\frac{
\gamma(\xi_2 , -\xi_1') \, \tilde K (\xi_1' , \xi_2')\, \tilde W(\xi_1' , \xi_2')}{
(\xi_1' - \xi_1) (\xi_2' - \xi_2)} \nonumber \\ 
&+&
\frac{i \gamma(\xi_2, \xi_1) \, \gamma(\xi_2, k_1)}{(\xi_1 + k_1)(\xi_2 + k_2)}
. 
\label{eq:stupidformula2}
\end{eqnarray}
\end{theorem}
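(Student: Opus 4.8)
The plan is to prove (\ref{eq:stupidformula1}) by evaluating the iterated integral on its right-hand side explicitly, by residues, using the factorisation (\ref{eqI0113}) of $\tilde K$ together with a splitting of $\tilde U:=-i\tilde K\tilde W=\tfrac{1}{(\xi_1+k_1)(\xi_2+k_2)}+\tilde U'$ (using (\ref{eq:functionalequation}) and (\ref{eqI0104})) into Fourier integrals over the half-planes $\{x_1>0\}$ and $\{x_1<0\}$; (\ref{eq:stupidformula2}) then follows by the same argument with $\xi_1,\xi_2$ (and $k_1,k_2$) interchanged, via the other factorisation in (\ref{eqI0113}). First I would collect the analyticity and decay facts that justify the contour moves: from (\ref{eq:estimation}), $\tilde W$ is analytic and satisfies (\ref{eqI0107})--(\ref{eqI0108}) on $\{\mathrm{Im}\,\xi_1>-2\kappa,\ \mathrm{Im}\,\xi_2>-2\kappa\}$; applying (\ref{eq:estimation}) quadrant by quadrant (the analogue of Theorem \ref{th:FTth2.1} in each $Q_i$), each quadrant Fourier integral $\mathfrak{F}_{Q_i}[u(\cdot,\cdot,0^+)]$ (with $\mathfrak{F}_{Q_1}[u(\cdot,\cdot,0^+)]=\tfrac1{(\xi_1+k_1)(\xi_2+k_2)}$ by (\ref{eq:BCDi})) is analytic and decays at least like $|\xi|^{-1}$, indeed like $|\xi|^{-3/2}$ transverse to the quarter-plane edge bounding $Q_i$, on the product of half-planes dictated by the support of $Q_i$; and, since the square roots in $\gamma$ are the arithmetic ones and $\mathrm{Im}(k^2)=\epsilon>4\kappa$, for $|\mathrm{Im}\,\xi_1|<\kappa$ the branch point $\sqrt{k^2-\xi_1^2}$ of $\xi_2'\mapsto\gamma(\xi_1,-\xi_2')$ lies in the open upper half-plane (so this map is analytic near the closed lower half-plane) while the branch point $-\sqrt{k^2-\xi_1^2}$ of $\xi_2'\mapsto\gamma(\xi_1,\xi_2')$ lies below $\mathrm{Im}\,\xi_2'=-2\kappa$, so that $G(\xi_1,\xi_2'):=\tilde W(\xi_1,\xi_2')/\gamma(\xi_1,\xi_2')=i\gamma(\xi_1,-\xi_2')\tilde U(\xi_1,\xi_2')$ is analytic and $O(|\xi_2'|^{-1})$ on $\{\mathrm{Im}\,\xi_2'>-2\kappa\}$. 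All of this has symmetric analogues under $\xi_1\leftrightarrow\xi_2$.

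With $\xi_1,\xi_2$ fixed, $|\mathrm{Im}\,\xi_{1,2}|<\kappa$, I would do the inner ($\xi_1'$) integral first. Split $\tilde U=\tilde U^{+}+\tilde U^{-}$, where $\tilde U^{+}:=\tfrac{1}{(\xi_1+k_1)(\xi_2+k_2)}+\mathfrak{F}_{Q_4}[u(\cdot,\cdot,0^+)]$ (the part of $u(\cdot,\cdot,0^+)$ supported in $x_1>0$: analytic and $\to0$ as $\mathrm{Im}\,\xi_1'\to+\infty$, with its only pole at $\xi_1'=-k_1$ lying below $\mathrm{Im}\,\xi_1'=-\kappa$ since $\kappa<\mathrm{Im}\,k_1$) and $\tilde U^{-}:=\mathfrak{F}_{Q_2}[u(\cdot,\cdot,0^+)]+\mathfrak{F}_{Q_3}[u(\cdot,\cdot,0^+)]$ (supported in $x_1<0$: analytic and $\to0$ as $\mathrm{Im}\,\xi_1'\to-\infty$). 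Since $\tilde K\tilde W=i\tilde U$, closing the line $\mathrm{Im}\,\xi_1'=\kappa$ upward for the $\tilde U^{+}$-contribution (no enclosed pole, so it vanishes) and downward for the $\tilde U^{-}$-contribution (only $\xi_1'=\xi_1$ enclosed) gives
\[
\int_{-\infty+i\kappa}^{\infty+i\kappa}\frac{\tilde K(\xi_1',\xi_2')\,\tilde W(\xi_1',\xi_2')}{\xi_1'-\xi_1}\,\mathd\xi_1'=2\pi\,\tilde U^{-}(\xi_1,\xi_2'),
\]
so that the double integral in (\ref{eq:stupidformula1}) reduces to
\[
\frac{\gamma(\xi_1,\xi_2)}{2\pi}\int_{-\infty-i\kappa}^{\infty-i\kappa}\frac{\gamma(\xi_1,-\xi_2')\,\tilde U^{-}(\xi_1,\xi_2')}{\xi_2'-\xi_2}\,\mathd\xi_2'.
\]

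The heart of the matter is this outer ($\xi_2'$) integral, where a naive closure is obstructed by the branch cut of $\gamma(\xi_1,-\xi_2')$. The device is to substitute, using $\tilde U^{-}=\tilde U-\tilde U^{+}=-i\tilde K\tilde W-\tilde U^{+}$ (i.e. (\ref{eqI0104})--(\ref{eqI0106})) and $\gamma(\xi_1,-\xi_2')\tilde K(\xi_1,\xi_2')=1/\gamma(\xi_1,\xi_2')$ from (\ref{eqI0113}),
\[
\gamma(\xi_1,-\xi_2')\,\tilde U^{-}(\xi_1,\xi_2')=-i\,G(\xi_1,\xi_2')-\gamma(\xi_1,-\xi_2')\,\tilde U^{+}(\xi_1,\xi_2').
\]
The $G$-term is then trivial: $G(\xi_1,\cdot)$ being analytic and decaying on $\{\mathrm{Im}\,\xi_2'>-2\kappa\}$, closing upward picks up only $\xi_2'=\xi_2$ and contributes $-i\cdot2\pi i\,G(\xi_1,\xi_2)=2\pi\,\tilde W(\xi_1,\xi_2)/\gamma(\xi_1,\xi_2)$. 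In the $\tilde U^{+}$-term, both $\gamma(\xi_1,-\xi_2')$ and $\mathfrak{F}_{Q_4}[u(\cdot,\cdot,0^+)](\xi_1,\cdot)$ are analytic on the whole lower half-$\xi_2'$-plane, so closing downward kills the $\mathfrak{F}_{Q_4}$-piece (no enclosed pole) and leaves, from the rational piece $\tfrac1{(\xi_1+k_1)(\xi_2'+k_2)}$, the residue at the enclosed pole $\xi_2'=-k_2$ (below $\mathrm{Im}\,\xi_2'=-\kappa$ since $\kappa<\mathrm{Im}\,k_2$), namely $2\pi i\,\gamma(\xi_1,k_2)/[(\xi_1+k_1)(\xi_2+k_2)]$. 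Collecting,
\[
\frac{\gamma(\xi_1,\xi_2)}{4\pi^2}\iint\frac{\gamma(\xi_1,-\xi_2')\,\tilde K(\xi_1',\xi_2')\,\tilde W(\xi_1',\xi_2')}{(\xi_1'-\xi_1)(\xi_2'-\xi_2)}\,\mathd\xi_1'\,\mathd\xi_2'=\tilde W(\xi_1,\xi_2)-\frac{i\,\gamma(\xi_1,\xi_2)\,\gamma(\xi_1,k_2)}{(\xi_1+k_1)(\xi_2+k_2)},
\]
which, rearranged, is (\ref{eq:stupidformula1}).

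The main obstacle is exactly the branch cut of $\gamma(\xi_1,-\xi_2')$ blocking the outer integral; the substitution above is what circumvents it, trading an integral one cannot close for a residue of $G=\tilde W/\gamma$, and it is the single step that uses the functional equation rather than mere analyticity. The rest is routine: one must check that every arc at infinity vanishes — the $|\xi|^{-1/2}$ decay of $\tilde W$ from the edge conditions (and the sharper $|\xi|^{-3/2}$ decay of the quadrant transforms transverse to the quarter-plane edges) exactly beats the $O(|\xi|^{1/2})$ growth of the $\gamma$'s — and that the iterated integral converges absolutely and may be handled by Fubini, which is where the vertex bound (\ref{eqI0109}) enters.
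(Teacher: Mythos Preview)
Your argument is correct and rests on the same ingredients as the paper's proof: the factorisation $\tilde K=\gamma(\xi_1,\xi_2)^{-1}\gamma(\xi_1,-\xi_2)^{-1}$, the splitting of $\tilde U'$ according to whether the support lies in $\{x_1>0\}$ (your $\tilde U^{+}$ minus the rational term is the paper's $\tilde U'_{+-}=\mathfrak F_{Q_4}[u]$) or $\{x_1<0\}$ (your $\tilde U^{-}$ is the paper's $\tilde U'_{-\circ}=\mathfrak F_{Q_2\cup Q_3}[u]$), and Cauchy integrals in each variable separately. The organisation differs: the paper \emph{derives} the formula by a Wiener--Hopf manoeuvre (divide the functional equation by $\tilde K_{\circ-}=1/\gamma(\xi_1,-\xi_2)$, sum-split in $\xi_2$, invoke Liouville's theorem in the $\xi_2$-plane, then express $\tilde U'_{-\circ}$ as a Cauchy integral of $\tilde K\tilde W$ in $\xi_1$), whereas you \emph{verify} it by evaluating the iterated integral directly via contour closures. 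Your substitution $\gamma(\xi_1,-\xi_2')\tilde U^{-}=-iG-\gamma(\xi_1,-\xi_2')\tilde U^{+}$ is precisely the algebraic content of the paper's Liouville step, and your two residue computations in $\xi_2'$ (closing up for $G$, down for $\tilde U^{+}$) are exactly what Liouville encodes. Your route is arguably more elementary in that it avoids appealing to Liouville, while the paper's route makes clearer how the formula is \emph{found}; the underlying analysis is the same.
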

The proof of this Theorem \ref{th:th3} is given in Appendix \ref{app:proofth3}. The relations (\ref{eq:stupidformula1}) and (\ref{eq:stupidformula2}) are integral equations for $\tilde W$, however, we will use them in another way, namely, for continuation of $\tilde W$ onto a Riemann manifold. Indeed, even if the right hand side of formulae (\ref{eq:stupidformula1}) and (\ref{eq:stupidformula2})
require $\tilde W$ to be known in a narrow strip near the real plane, one can choose $\xi_1 , \xi_2$ (the arguments of $\tilde W$ in the left hand side) to belong to a much wider domain, and, doing so, provide the required continuation. This will be explained in more detail below. 


\subsection{Domains for analytical continuation}

Let us define the domains $H^+$ and $H^-$ as domains of a complex plane 
of a single variable, being  the upper and the lower half-planes cut along the curves $h^+$ and $h^-$ (see Fig.~\ref{fig02}, left). These curves are the sets of points 
\[
h^{\pm}: \quad \xi = \pm \sqrt{k^2 - \tau^2}, \quad \tau \in \mathbb{R}.  
\]  
Let $\hat H^+$ be the upper half-plane, which is not cut along $h^+$. 
The domains in all cases are open, i.e.\ the boundary is not included. 
The boundary of $H^-$ consists of two parts: 
the real axis and the curve $h^-$ (passed two times, from $-i \infty$ to $-k$ along the right shore of the cut and backwards along the left shore of the cut).
Denote this pass by $P$ (see Fig.~\ref{fig02}, right), so the boundary of $H^-$ is 
$\partial H^-=\mathbb{R}\cup P$.  Similarly, the 
boundary of $H^+$ is $\partial H^+=\mathbb{R}\cup(-P)$. 

 \begin{figure}[ht]
 \centerline{\epsfig{file=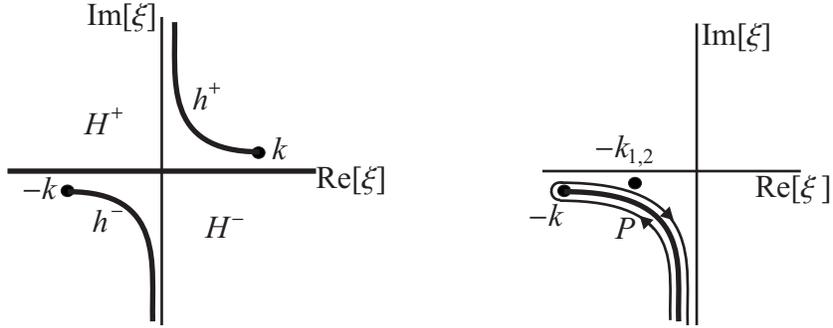}}
 \caption{Domains $H^{\pm}$ (left), contour $P$, right}
 \label{fig02}
 \end{figure}

An important property of the domains $H^\pm$ is the following lemma.
 
\begin{lemma}\label{lem:kappasign}
Let the branch of the square root $\sqrt{k^2 - \xi^2}$ be chosen such that this function have positive imaginary part for real $\xi$. If $\xi \in H^+$ or $\xi \in H^-$, then $\sqrt{k^2 - \xi^2} \in H^+$.  
\end{lemma}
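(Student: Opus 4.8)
The plan is to analyse the conformal map $\xi \mapsto w := \sqrt{k^2-\xi^2}$ directly, tracking where the branch points and branch cuts of this map sit relative to the domains $H^\pm$. First I would recall that the two branch points of $w(\xi)$ are at $\xi = \pm k$, and that the curve $h^+$ (resp.\ $h^-$) is precisely the image, under $\xi \mapsto \sqrt{k^2-\tau^2}$, of the real $\tau$-axis with the principal branch — equivalently, $h^{\pm}$ are exactly the loci where $w(\xi) = \pm\sqrt{k^2-\tau^2}$ for real $\tau$, i.e.\ where $w(\xi)$ lands on $\pm h^+$. The key observation to establish is that $w$ maps $H^+ \cup H^-$ (the plane with the two cuts $h^+$ and $h^-$ removed, but including a neighbourhood of the real axis) biholomorphically, or at least holomorphically, into a region that avoids both cuts $h^{\pm}$; since $w(\xi)$ has positive imaginary part for real $\xi$ with $|\xi|<1$, continuity and connectedness will then pin down which component it lands in, namely $H^+$.

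The concrete steps I would carry out: (1) Verify that $w$ is single-valued and holomorphic on $\hat H^+ \cup \hat H^-$ minus $\{\pm k\}$, because the cuts $h^{\pm}$ are arranged so that encircling a single branch point is impossible without crossing a cut — this is the whole point of the definition of $h^{\pm}$ as $\pm\sqrt{k^2-\tau^2}$. (2) Show $w(H^+) \subseteq H^+$: take $\xi \in H^+$, so $\operatorname{Im}(\xi)>0$ and $\xi \notin h^+$; I would argue that $w(\xi)$ cannot be real and cannot lie on $h^+$ or $h^-$, the first two by examining $\operatorname{Im}(w^2) = -2\operatorname{Re}(\xi)\operatorname{Im}(\xi)$ together with the sign convention, and the last by noting $w(\xi)\in h^{\pm}$ would force $\xi^2 = k^2 - (h^{\pm})^2$, i.e.\ $\xi$ on a curve that is itself one of the cuts or the real axis — contradiction. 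Then, since $w$ is continuous on the connected set $H^+$ and takes a value with positive imaginary part somewhere (e.g.\ near $\xi=0$), its image stays in the connected component of $\mathbb{C}\setminus(\mathbb{R}\cup h^+\cup h^-)$ containing that value, which is $H^+$. (3) Repeat the argument for $\xi \in H^-$: here $\operatorname{Im}(\xi)<0$, but by the same continuity/connectedness argument applied to the connected set $H^-$, and checking a sample point (e.g.\ $\xi = -i t$ for small $t>0$, where $w = \sqrt{k^2+t^2}$ has, after tracking the branch, positive imaginary part because $k^2 = 1+i\epsilon$), the image again lands in $H^+$.

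The main obstacle I anticipate is step (2)–(3), specifically the bookkeeping of \emph{which} connected component of the triply-cut plane contains the image, and the verification that $w(\xi)$ genuinely never touches the cuts $h^{\pm}$ or the real axis for $\xi$ in the open domains. This requires a careful description of the geometry of $h^+$ and $h^-$ (they emanate from $\pm k$ and run off to $\mp i\infty$ in the lower half-plane, by the excerpt's description of $\partial H^-$), and a clean argument that the preimage under $w$ of $\mathbb{R} \cup h^+ \cup h^-$ is exactly $\mathbb{R} \cup h^+ \cup h^-$ itself — i.e.\ that this set is invariant, so that its complement maps into its complement. Once that invariance is in hand, the lemma reduces to a single sample-point check per component, which is routine given $\operatorname{Im}(k^2) = \epsilon > 0$.
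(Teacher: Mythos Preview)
Your approach is essentially the paper's own: both arguments rest on showing that the boundary set $\mathbb{R}\cup h^+\cup h^-$ is mapped into itself by $w(\xi)=\sqrt{k^2-\xi^2}$ (the paper phrases this as tracking $\mathbb{R}\leftrightarrow -P$ and $P\to\mathbb{R}$, then invoking ``the principles of conformal mapping''), after which a single sample point per connected component fixes the image in $H^+$. Two small slips to fix before writing it up: first, $\operatorname{Im}(w^2)=\epsilon-2\operatorname{Re}(\xi)\operatorname{Im}(\xi)$, not $-2\operatorname{Re}(\xi)\operatorname{Im}(\xi)$, so the ``$w$ not real'' step should instead use that $w\in\mathbb{R}$ forces $\xi^2=k^2-r$ with $r\ge 0$, i.e.\ $\xi\in h^{\pm}$; second, $h^+$ lies in the \emph{upper} half-plane and runs to $+i\infty$ (since the chosen branch has positive imaginary part for real argument), not the lower half-plane.
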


\begin{proof}
Let be $\xi \in H^+$. 
Consider the mapping $\xi \to \sqrt{k^2 - \xi^2}$. This mappings maps the real axis onto 
$-P$, and $-P$ onto the real axis. Thus, according to the principles of conforming mapping, $H^+$ is mapped  onto $H^+$. 

Let now be $\xi \in H^-$. The real axis maps onto $-P$, and $P$ maps onto the real axis. Thus, $H^-$ is mapped onto $H^+$. 
\end{proof}


\subsection{First step of analytical continuation}

Initially, as a 1/4-based function, $\tilde W (\xi_1 , \xi_2)$ is analytic in the 
domain $\hat H^+ \times \hat H^+$.
It is easy to show that, besides,  $\tilde W$ is continuous at the boundary of this domain
(for example one can note, as it has been mentioned above, that $\tilde W$ is 
analytic in a slightly wider domain).



We are going to apply our formulae for analytical continuation twice. 
Each time the domain of analyticity of $\tilde W$ will be extended. 
In this subsection we apply the formulae of analytical continuation 
for the first time. 

Our first aim is to continue $\tilde W$ into the domain $(H^- \setminus \{-k_1\} ) \times \hat H^+$. 
Formula (\ref{eq:stupidformula1}) provides an analytical continuation of 
$\tilde W(\xi_1 , \xi_2)$
into a narrow strip surrounding the real plane, e.g.\ into the domain 
$|{\rm Im}(\xi_1)| < \kappa $, $|{\rm Im}(\xi_2)| < \kappa $. 
Then, fix $(\xi_1, \xi_2)$ belonging to this strip such that   
${\rm Im}(\xi_1) < 0$, ${\rm Im}(\xi_2) > 0$ and change the integration surface\footnote{The notations here and below should be clear. We have in mind 
double contour integrals. The first factor relates everywhere to $\xi_1'$, the 
second one to $\xi_2'$. The left end of the interval is the start of the contour. In this paper we avoid using the standard differential form notations.} in 
(\ref{eq:stupidformula1}) from 
\[
(-\infty + i \kappa , \infty + i \kappa) \times (-\infty - i \kappa , \infty - i \kappa)
\qquad
\mbox{to}   
\qquad
(-\infty  , \infty ) \times (-\infty  , \infty ),
\]
as illustrated in Fig.~\ref{fig:deform30to33}.

\begin{figure}[ht]
\centering
\includegraphics[width=0.7\textwidth]{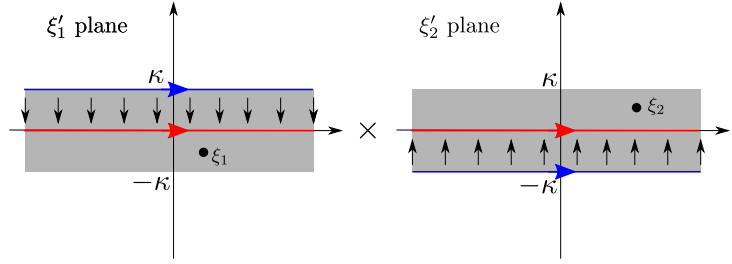}
\caption{Illustration of the contour deformation when passing from (\ref{eq:stupidformula1}) to (\ref{eq:stupidformula1_prime}) }
\label{fig:deform30to33}
\end{figure}

This deformation of the integration surface does not change the value of the integral 
due to the 2D analog of Cauchy's theorem or Stokes' theorem \cite{Shabat2}. 
Thus, for ${\rm Im}(\xi_1) < 0$, ${\rm Im}(\xi_2) > 0$ the continuation is given by a slightly modified formula instead of (\ref{eq:stupidformula1}): 
\begin{eqnarray}
\tilde W(\xi_1 , \xi_2) &=& 
\frac{ \gamma(\xi_1, \xi_2)}{4\pi^2}
\int \limits_{-\infty }^{\infty }\mathd \xi_2'  
\int \limits_{-\infty }^{\infty }\mathd \xi_1' 
\,\frac{
\gamma(\xi_1 , -\xi_2') \, \tilde K (\xi_1' , \xi_2')\, \tilde W(\xi_1' , \xi_2')}{
(\xi_1' - \xi_1) (\xi_2' - \xi_2)} \nonumber \\
&+& \frac{i \gamma(\xi_1, \xi_2) \, \gamma(\xi_1, k_2)}{(\xi_1 + k_1)(\xi_2 + k_2)}.
\label{eq:stupidformula1_prime}
\end{eqnarray}
Using this formula, we can continue $\tilde W$ into the domain $(H^- \setminus \{-k_1\} ) \times \hat H^+$:

\begin{theorem}\label{th:th1step1}
The function $\tilde W (\xi_1 , \xi_2)$ is analytic in the domain 
${(H^- \setminus \{-k_1\} ) \times \hat H^+}$.  
In the vicinity of $\{-k_1\} \times \hat H^+$ the function can be represented as 
\begin{equation}
\tilde W(\xi_1 , \xi_2) = \frac{1}{\xi_1 + k_1} 
\frac{i\, \gamma(k_1, \xi_2) \gamma(k_1 , k_2)}{\xi_2 + k_2} + O((\xi_1 + k_1)^0)
\label{eqI0114}
\end{equation}
The function is analytic on the boundary elements $\mathbb{R} \times \hat H^+$, 
$P \times \hat H^+$, $H^- \times \mathbb{R}$, and continuous on the 
boundary element $P \times \mathbb{R}$.
\end{theorem}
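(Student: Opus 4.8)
The plan is to use the modified integral formula (\ref{eq:stupidformula1_prime}) as the definition of the continuation and then check that its right-hand side is analytic on the claimed domain $(H^-\setminus\{-k_1\})\times\hat H^+$. The argument splits into two pieces, matching the two terms of (\ref{eq:stupidformula1_prime}). The second, explicit term $i\gamma(\xi_1,\xi_2)\gamma(\xi_1,k_2)/[(\xi_1+k_1)(\xi_2+k_2)]$ is the easy one: one only needs to know that $\gamma(\xi_1,\xi_2)=\sqrt{\sqrt{k^2-\xi_1^2}+\xi_2}$ is analytic and nonzero whenever $\xi_1\in H^-$ and $\xi_2\in\hat H^+$. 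This follows from Lemma \ref{lem:kappasign}: for $\xi_1\in H^-$ we have $\sqrt{k^2-\xi_1^2}\in H^+$, so $\sqrt{k^2-\xi_1^2}+\xi_2$ lies in $H^++\hat H^+$, a set which avoids the negative real axis (and the origin), hence the outer square root is single-valued and analytic there; similarly $\gamma(\xi_1,k_2)$ is analytic in $\xi_1\in H^-$ since $\mathrm{Im}(k_2)>0$ puts $\sqrt{k^2-\xi_1^2}+k_2$ safely in the upper half-plane. The only singularity of this term in the region is the simple pole at $\xi_1=-k_1$, and evaluating the regular factors there gives precisely the principal part displayed in (\ref{eqI0114}). (One should double-check that $-k_1\in H^-$, which holds because $\mathrm{Re}(k_1)>0$ and $\mathrm{Im}(k_1)$ is small and positive, so $-k_1$ sits just below the real axis to the left of the branch point $-k$, hence in the interior of $H^-$ rather than on the cut $h^-$.)

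The first term, the double integral, is where the work lies. The integrand contains the factor $\gamma(\xi_1,-\xi_2')$, which for fixed real $\xi_2'$ must be analytic in $\xi_1$ on $H^-$: again Lemma \ref{lem:kappasign} gives $\sqrt{k^2-\xi_1^2}\in H^+$, and subtracting the real number $\xi_2'$ keeps us off the branch cut of the outer root, so $\gamma(\xi_1,-\xi_2')$ is analytic in $\xi_1\in H^-$ uniformly enough in $\xi_2'$. The factor $\gamma(\xi_1,\xi_2)$ outside the integral is analytic in $(\xi_1,\xi_2)\in H^-\times\hat H^+$ by the same reasoning as above. The denominators $(\xi_1'-\xi_1)$ and $(\xi_2'-\xi_2)$ do not vanish when $\xi_1',\xi_2'$ run along the real axis and $\mathrm{Im}(\xi_1)<0$, $\mathrm{Im}(\xi_2)>0$ — but to reach all of $H^-\times\hat H^+$ one must continue past the real axis, and here the standard device applies: as $\xi_1$ crosses into $\mathrm{Im}(\xi_1)<0$ the pole $\xi_1'=\xi_1$ would cross the contour $(-\infty,\infty)$, so one deforms the $\xi_1'$-contour to avoid it (or, equivalently, the deformation from (\ref{eq:stupidformula1}) to (\ref{eq:stupidformula1_prime}) was arranged so that $\xi_1$ with $\mathrm{Im}(\xi_1)<0$ is already on the correct side). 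One then pushes $\xi_1$ further down into $H^-$, tracking the $\xi_1'=\xi_1$ pole; the continuation stays analytic as long as this pole does not collide with a singularity of the integrand in the $\xi_1'$ variable, and the relevant singularities of $\tilde K(\xi_1',\xi_2')\tilde W(\xi_1',\xi_2')$ in $\xi_1'$ are the branch points of $\tilde K$, i.e. the curve $h^-$ (and $h^+$) — which is exactly why the cut in $H^-$ is taken along $h^-$. Convergence of the integral at $\xi_2'\to\pm\infty$ is controlled by estimate (\ref{eqI0107})–(\ref{eqI0108}) together with the $|\xi_2'|^{-1/2}$ decay of $\tilde K$ and the $|\xi_2'|^{1/2}$ growth of $\gamma(\xi_1,-\xi_2')$, which balance to an integrable decay; at $\xi_1'\to\pm\infty$ one uses the analogous estimate. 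Finally, analyticity (or mere continuity) on the boundary elements $\mathbb{R}\times\hat H^+$, $P\times\hat H^+$, $H^-\times\mathbb{R}$ and $P\times\mathbb{R}$ follows by the same contour-deformation bookkeeping: on $\mathbb{R}\times\hat H^+$ one uses the original formula (\ref{eq:stupidformula1}) with shifted contours, on $P\times\hat H^+$ one notes that approaching the two shores of $h^-$ from inside $H^-$ gives matching limits because the $\xi_1'=\xi_1$ pole never reaches $h^-$ from the interior side, and the weaker claim (continuity only) on $P\times\mathbb{R}$ reflects the loss of one safe direction.

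The main obstacle, and the part deserving care, is the uniform control of the double integral as $\xi_1$ is dragged deep into $H^-$: one must verify that the moving pole $\xi_1'=\xi_1$ can always be enclosed by a contour that stays in the common domain of analyticity of $\gamma(\xi_1,-\xi_2')$ (in $\xi_1$ — trivial) and of $\tilde K(\xi_1',\xi_2')\tilde W(\xi_1',\xi_2')$ (in $\xi_1'$ — this is where the shape of $h^-$ enters), and that all this is compatible with the $\xi_2'$-integration, i.e. the deformation can be done simultaneously for all $\xi_2'$ on the contour with integrable bounds. Making this rigorous is essentially the content of Appendix \ref{app:proofth3} reused once more, so I would organise the proof by first establishing the analyticity of the explicit term and the location of its pole, then invoking the contour-deformation lemma to extend the integral term, and finally reading off the boundary behaviour from where the deformation breaks down.
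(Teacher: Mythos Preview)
Your overall plan (use (\ref{eq:stupidformula1_prime}), treat the additive term and the double integral separately, invoke Lemma~\ref{lem:kappasign} for the $\gamma$-factors) is the same as the paper's, and your discussion of the additive term is correct. The handling of the integral term, however, contains a genuine misconception.

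You write that ``to reach all of $H^-\times\hat H^+$ one must continue past the real axis'' and then describe tracking the pole $\xi_1'=\xi_1$ by deforming the $\xi_1'$-contour deeper into the lower half-plane, with the cut $h^-$ appearing because the pole would otherwise collide with singularities of $\tilde K(\xi_1',\xi_2')\tilde W(\xi_1',\xi_2')$ in~$\xi_1'$. This is wrong on two counts. First, $H^-$ is by definition the \emph{open lower} half-plane (minus the cut), so for $(\xi_1,\xi_2)\in H^-\times\hat H^+$ one already has $\mathrm{Im}(\xi_1)<0$ and $\mathrm{Im}(\xi_2)>0$; there is nothing to cross. With the integration contours \emph{fixed} at $\mathbb{R}\times\mathbb{R}$, the polar factors $(\xi_1'-\xi_1)^{-1}$ and $(\xi_2'-\xi_2)^{-1}$ never vanish, and the integrand is jointly analytic in $(\xi_1,\xi_2)$ on the whole product domain; Morera's theorem then gives analyticity of $J$ directly, with no deformation at all. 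Second, your proposed pole-tracking deformation would be circular: dragging the $\xi_1'$-contour into $H^-$ requires knowing that $\tilde W(\xi_1',\xi_2')$ is analytic there, which is precisely the claim under proof. The actual reason the domain in $\xi_1$ is cut along $h^-$ has nothing to do with $\tilde K\tilde W$; it is that the factor $\gamma(\xi_1,-\xi_2')$ (and the prefactor $\gamma(\xi_1,\xi_2)$) has its branch point, as a function of~$\xi_1$, at $\xi_1=-k$, and Lemma~\ref{lem:kappasign} only guarantees single-valuedness once that cut is made.

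Your boundary discussion is also off the mark. Contour deformation \emph{is} used on the boundary, but only by small amounts within the strip $|\mathrm{Im}(\xi')|<\kappa$, and for a different reason in each case: on $\mathbb{R}\times\hat H^+$ one shifts the $\xi_1'$-contour slightly up to clear the pole; on $H^-\times\mathbb{R}$ one shifts the $\xi_2'$-contour slightly down; on $P\times\hat H^+$ the obstruction is not a pole but the branch point of $\gamma(\xi_1,-\xi_2')$ in the $\xi_2'$-variable at $\xi_2'=\sqrt{k^2-\xi_1^2}$, which lands on $\mathbb{R}$ when $\xi_1\in P$, so one indents the $\xi_2'$-contour around it. Continuity on $P\times\mathbb{R}$ needs a separate trick (splitting $\tilde W$ as $\tilde W(\xi_1',\xi_2')-\tilde W(\xi_1',\mathrm{Re}(\xi_2))$ plus the remainder) to remove the apparent singularity of the $\xi_2'$-polar factor.
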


\begin{remark} 
One can see that we are going to prove analyticity in the domain
$B_1 \times B_2$ (of real dimension 4), where $B_1 = H^-$, $B_2 = \hat H^+$ (minus the polar set), 
analyticity at the points of the boundary 
$(\ptl B_1 \times B_2) \cup (B_1 \times \ptl B_2)$ (of real dimension 3) and continuity
at the points of the skeleton of the boundary 
$(\ptl B_1 \times \ptl B_2)$ (of real dimension 2).   

Analyticity at any point of $(\ptl B_1 \times B_2) \cup (B_1 \times \ptl B_2)$
has the sense that
the function can 
be analytically continued to a small polydisc with the center at such point.

Continuity of the function at the points belonging to 
$(\ptl B_1 \times \ptl B_2)$
 means that 
 the function tends to a certain common
limit while the argument tends to the point along any continuous 
path going in the domain
$(B_1 \times B_2) \cup (\ptl B_1 \times B_2) \cup (B_1 \times \ptl B_2)$.
\end{remark}  

\begin{proof}
Let us start by analysing the integral in (\ref{eq:stupidformula1_prime}):
\[
J(\xi_1, \xi_2) \equiv 
\int \limits_{-\infty }^{\infty }\mathd \xi_2'  
\int \limits_{-\infty }^{\infty }\mathd \xi_1' 
\,\frac{
\gamma(\xi_1 , -\xi_2') \, \tilde K (\xi_1' , \xi_2')\, \tilde W(\xi_1' , \xi_2')}{
(\xi_1' - \xi_1) (\xi_2' - \xi_2)}.
\]

\paragraph*{Domain of analyticity of $J$.} First of all, the integral converges due to the growth conditions (\ref{eqI0107})--(\ref{eqI0109}) of the functional problem. Let us now study all factors of the integrand and make sure that neither of them is 
singular for $\xi_1 \in H^-\setminus \{-k_1\}$ and $\xi_2 \in \hat H^+$.  

The functions $\tilde K(\xi_1' , \xi_2')$ and $\tilde W(\xi_1' , \xi_2')$ do not depend 
on $\xi_1, \xi_2$ and do not pose any problem. 

The polar factors $(\xi_1' - \xi_1)^{-1}$ and $(\xi_2' - \xi_2)^{-1}$ are regular
since $\xi_{1,2}$ are not real, while $\xi_{1,2}'$ are. 

The factor $\gamma(\xi_1 , - \xi_2') = \sqrt{\sqrt{k^2 - \xi_1^2} - \xi_2'}$  would have branch points for $\xi_1=\pm k$ and $\sqrt{k^2-\xi_1^2}=\xi_2'$. However $\pm k \notin H^-$ by definition of $H^-$ and, since $\xi_1\in H^-$, we know by Lemma \ref{lem:kappasign} that $\sqrt{k^2-\xi_1^2}$ belongs to $H^+$ which does not contain the real number $\xi_2'$. 

Therefore, $J$ is convergent and its integrand is analytic in ${(H^- \setminus \{-k_1\} ) \times \hat H^+}$. Hence, since Morera's theorem holds in two complex variables (see \cite{Shabat2}), $J$ is analytic in this domain.

\paragraph*{Analyticity of $J$ on the boundary.}

The fact that $J(\xi_1 , \xi_2)$ is analytic at the points 
of $\mathbb{R} \times \hat H^+$,
$H^- \times \mathbb{R}$ 
and $P \times \hat H^+$
is supported by the possibility to use Cauchy's theorem 
and change the integration surface
to a product $\Theta_1 \times \Theta_2$, where contours
$\Theta_1$ and $\Theta_2$ are contained within the strip $|{\rm Im}(\xi)| < \kappa$. 

Let us consider a point $(\xi_1^\star,\xi_2^\star)\in\mathbb{R} \times \hat{H}^+$ (Step 1). First of all, the term $\gamma(\xi_1,-\xi_2')$ is analytic at $\xi_1^\star$ since $\xi_1^\star\notin h^{\pm}$. The term that is actually problematic is the polar factor $(\xi_1-\xi_1')^{-1}$. 
To avoid this problem, first consider $(\xi_1,\xi_2^\star)$, with $\xi_1\in H^-$ (Step 2). From what we saw above, $J$ is clearly analytic there. Moreover its integrand (as a function of $(\xi_1',\xi_2')$) is analytic within the strip $|\text{Im}(\xi_1')|<\kappa$. Hence, as illustrated in Fig.~\ref{fig:proof32_1}, we can deform the contour up from $\mathbb{R}$ to $\Theta_1=(-\infty + i \kappa' , \infty + i \kappa')$ without changing the value of $J$ (Step 3). Now we can safely let $\xi_1$ move towards $\xi_1^\star$ from within $H^-$, without hitting the singularity of the polar factor  $(\xi_1-\xi_1')^{-1}$. Hence $J$ can be analytically continued on $\mathbb{R}\times \hat{H}^+$.

\begin{figure}[ht]
\centering
\includegraphics[width=0.7\textwidth]{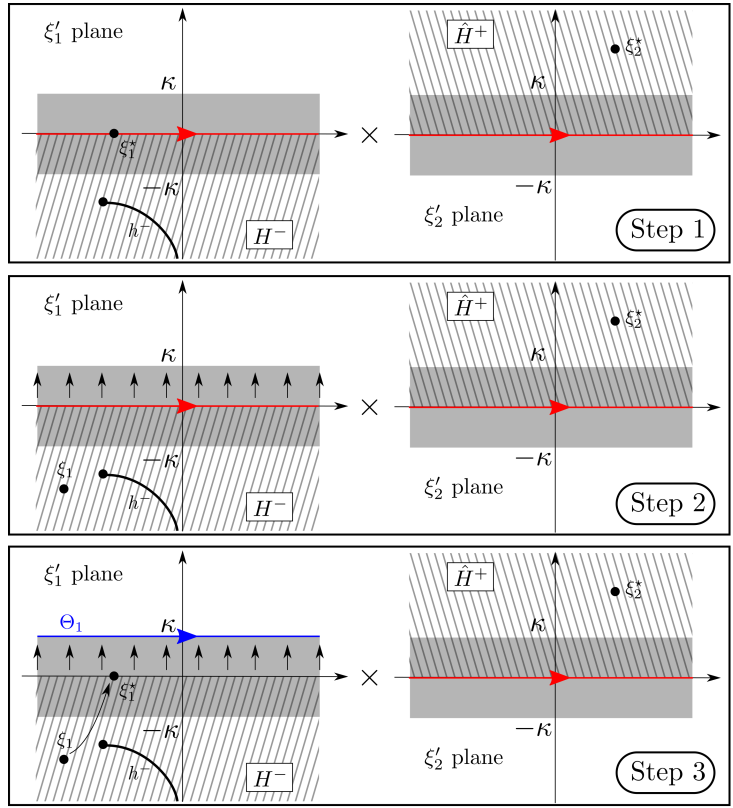}
\caption{Illustration of the contour deformation needed to prove analyticity on $\mathbb{R} \times \hat{H}^+$ }
\label{fig:proof32_1}
\end{figure}

Let us now consider a point $(\xi_1^\star,\xi_2^\star)\in H^- \times \mathbb{R}$ (Step 1). This time the problematic term is the polar factor $(\xi_2-\xi_2')^{-1}$, which may become singular at $\xi_2^\star$. Hence, we want to deform the contour down in the $\xi_2'$ plane, but this time we need to be a little bit more careful. Consider a point $(\xi_1^\star,\xi_2)$, with $\xi_2\in \hat{H}^+$ (Step 2). Note that at this stage, the singular loci of the function $\gamma(\xi_1, -\xi_2')$ is $h^-$. Now, as illustrated in Fig.~\ref{fig:proof32_2}, when deforming the contour down to a new contour $\Theta_2$, this singular loci becomes a curve surrounding $h^-$. Hence when deforming the contour down, one should be careful that this loci does not intersect the point $\xi_1^\star$ (Step 3). Note that since $\xi_1^\star\notin h^-$, it is always possible to find a $\kappa'>0$ small enough such that this isn't the case. Let us choose $\Theta_2=(-\infty - i \kappa', \infty - i \kappa')$ so that this deformation does not affect the value of $J(\xi_1^\star,\xi_2)$. We can then safely let $\xi_2$ move towards $\xi_2^\star$ from within $\hat{H}^+$, without hitting the singularity of the polar factor  $(\xi_2-\xi_2')^{-1}$. Hence $J$ can be analytically continued on $H^-\times\mathbb{R}$.

\begin{figure}[ht]
\centering
\includegraphics[width=0.7\textwidth]{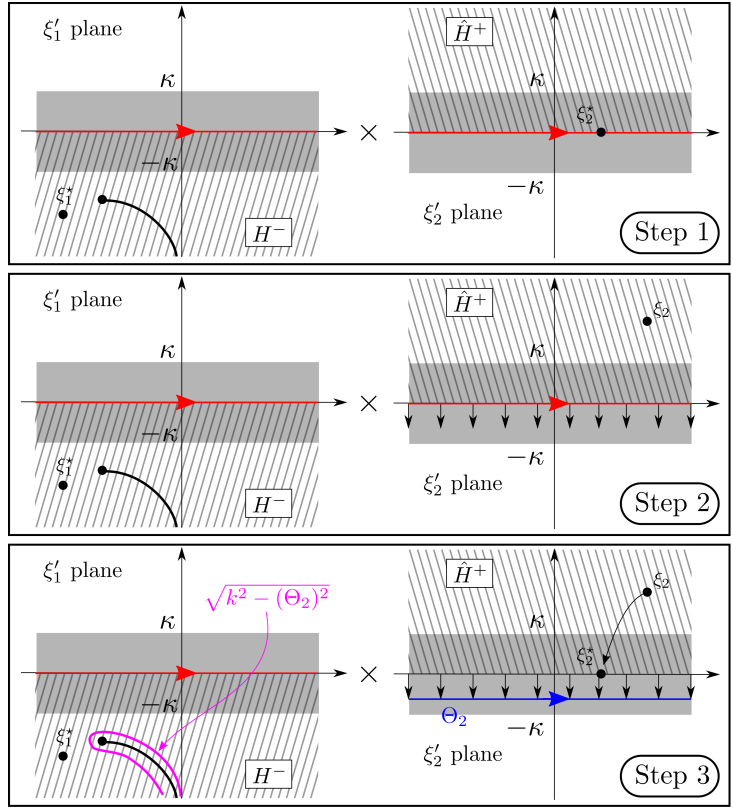}
\caption{Illustration of the contour deformation needed to prove analyticity on $\mathbb{H^-} \times \mathbb{R}$ }
\label{fig:proof32_2}
\end{figure}

Finally, let us consider a point $(\xi_1^\star,\xi_2^\star)\in P\times \hat{H}^+$ (Step 1). Here the polar factors do not present any issues, however, the function $\gamma(\xi_1,-\xi_2')$ does. Its singular loci in the $\xi_1$ plane for $\xi_2'\in\mathbb{R}$ is $P$, while its singular loci in the $\xi_2'$ plane for $\xi_1\in P$ is $\mathbb{R}$. Let us further assume that $\xi_1^\star$ is on the right shore of $P$ as illustrated in Fig.~\ref{fig:proof32_3} (the left shore case is very similar). The problematic point in the $\xi_2'$ plane is $\sqrt{k^2-(\xi_1^\star)^2}$ which is a branch point of the function $\gamma(\xi_1^\star,-\xi_2')$ and also belongs to the contour of integration. As before, we will endeavour to deform the contour to avoid this problem. Let us consider $\xi_1\in H^-$ (Step 2).  Now, deform the contour to a contour $\Theta_2$ indented below $\sqrt{k^2-(\xi_1^\star)^2}$ and above $-\sqrt{k^2-(\xi_1^\star)^2}$ (Step 3). As illustrated in Fig.~\ref{fig:proof32_3}, this has for effect to deflect the singular loci in the $\xi_1$ plane away from $\xi_1^\star$, while ensuring that its two shores do not cross in the process. The value of $J(\xi_1,\xi_2^\star)$ remain unchanged by such deformation. It is now possible to let $\xi_1$ approach $\xi_1^\star$ freely from within $H^-$ without hitting any singularity, and so $J$ can be analytically continued to $(\xi_1^\star,\xi_2^\star)$.

\begin{figure}[ht]
\centering
\includegraphics[width=0.7\textwidth]{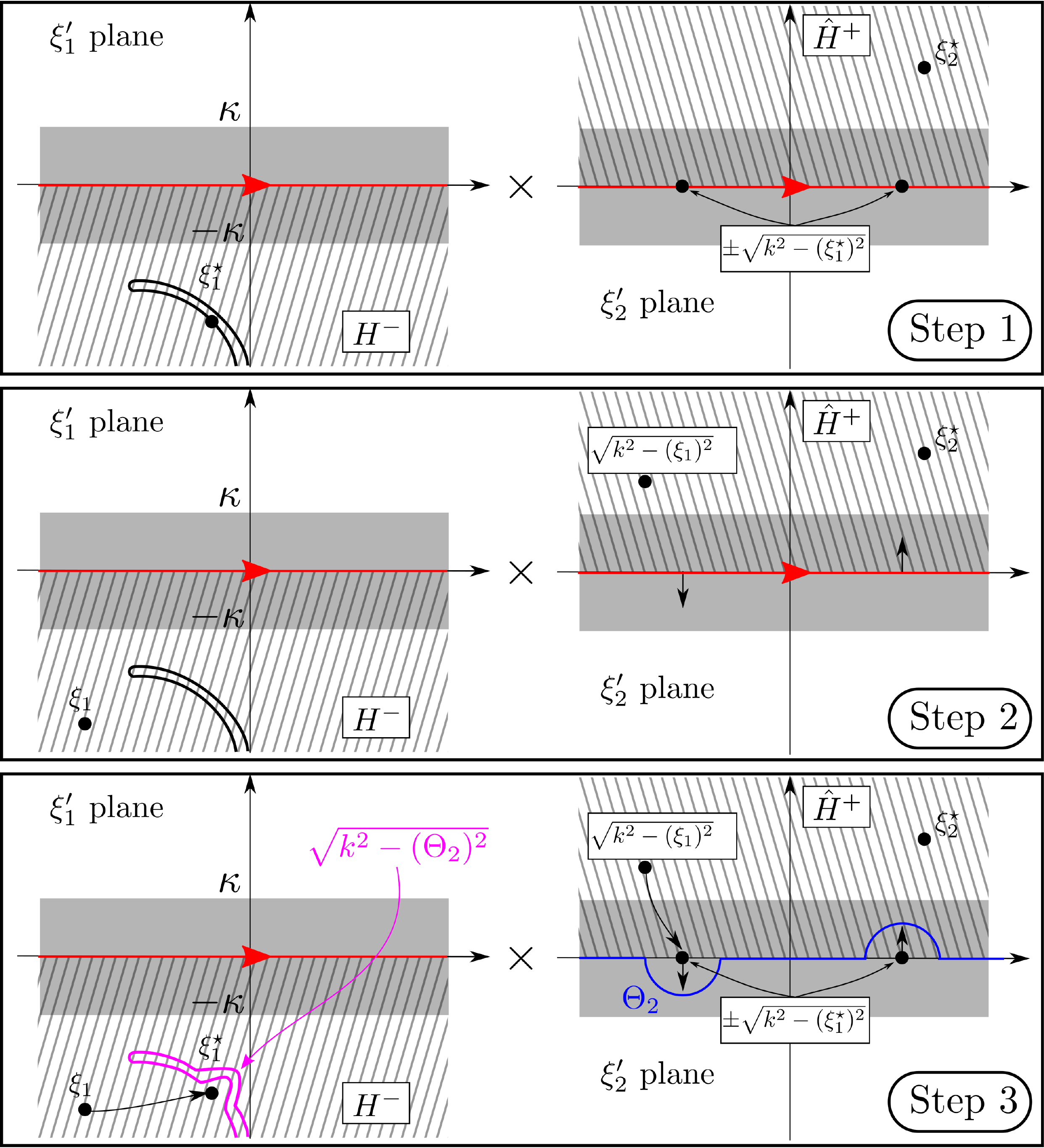}
\caption{Illustration of the contour deformation needed to prove analyticity on $P \times \hat{H}^+$ }
\label{fig:proof32_3}
\end{figure}
  

\paragraph*{Continuity of $J$ on the skeleton.} We shall now focus on proving the continuity of $J(\xi_1,\xi_2)$ at a point $(\xi_1^\star,\xi_2^\star)\in P \times \mathbb{R}$, when this point is approached form within $H^- \times \hat{H}^+$ possibly including parts of the boundary considered above. In order to do so, decompose $\tilde W$ as $\tilde W = \tilde W_1  + \tilde W_2$, where
\begin{eqnarray*}
\tilde W_1(\xi_1' , \xi_2'; \xi_2) =  
\tilde W (\xi_1' , \xi_2') - \tilde W (\xi_1' , {\rm Re}(\xi_2)) 
& \text{and} & 
\tilde W_2(\xi_1' , \xi_2';\xi_2) =  
\tilde W (\xi_1' , {\rm Re}(\xi_2)). 
\end{eqnarray*}
This naturally decomposes $J$ into $J_1+J_2$.
The first term is continuous as $\xi_2 \rightarrow \xi_2^\star$, since the factor $(\xi_2' - \xi_2)$ is now a removable singularity and so $J_1$ is continuous. The second term does have a polar singularity, though all the other terms are well behaved, and we can calculate the $\xi_2'$ integral of $J_2$ using a residue that behaves like 
\[
\frac{\gamma(\xi_1,-\xi_2^\star)}{\xi_1'-\xi_1} \tilde{K}(\xi_1',\xi_2^\star) \tilde{W}(\xi_1',\xi_2^\star),
\]
which is continuous as $\xi_1\rightarrow \xi_1^\star$. Hence, the integral possesses the required continuity. 

\paragraph*{External factor and additive term.} The factor $\gamma(\xi_1 , \xi_2)$ in front of the integral is analytic by Lemma \ref{lem:kappasign}. Indeed, since $\xi_1\in H^-$, $\sqrt{k^2-\xi_1^2}$ belongs to $H^+$ and cannot be equal to $-\xi_2$ since $\xi_2 \in \hat{H}^+$. The analyticity and the continuity on the boundary can be established in a straightforward way. The additive term can also be analysed directly, and it fits the theorem. In particular, the only singularity of $\tilde W$ in the domain $H^-  \times \hat H^+$ is due to this term. 
\end{proof}

Using the modification of (\ref{eq:stupidformula2}) written as 
\begin{equation}
\tilde W(\xi_1 , \xi_2) =
\frac{ \gamma(\xi_2, \xi_1)}{4\pi^2}
\int \limits_{-\infty }^{\infty }  
\mathd \xi_1'
\int \limits_{-\infty }^{\infty } 
\mathd \xi_2' 
\,\frac{
\gamma(\xi_2 , -\xi_1') \, \tilde K (\xi_1' , \xi_2')\, \tilde W(\xi_1' , \xi_2')}{
(\xi_1' - \xi_1) (\xi_2' - \xi_2)}  
\label{eq:stupidformula2_prime}
\end{equation}
\[
\qquad \qquad \qquad \qquad \qquad \qquad
+\frac{i \gamma(\xi_2, \xi_1) \, \gamma(\xi_2, k_1)}{(\xi_1 + k_1)(\xi_2 + k_2)},
\]
valid for ${\rm Im}(\xi_1) > 0$, ${\rm Im}(\xi_2) < 0$, 
one can prove the ``symmetrical'' theorem:

\begin{theorem}\label{th:th2step1}
The function $\tilde W (\xi_1 , \xi_2)$ is analytic in the domain 
$\hat H^+ \times (H^- \setminus \{-k_2\} )$.  
In the vicinity of $ \hat H^+ \times \{-k_2\}$ the function can be represented as 
\begin{equation}
\tilde W(\xi_1 , \xi_2) = \frac{1}{\xi_2 + k_2} 
\frac{i\, \gamma(k_2, \xi_1) \gamma(k_2 , k_1)}{\xi_1 + k_1} + O((\xi_2 + k_2)^0).
\label{eqI0114a}
\end{equation}
The function is analytic on the boundary elements $\hat H^+ \times \mathbb{R}$, 
$\hat H^+ \times  P$, $\mathbb{R} \times H^-$, 
and continuous on the boundary element
$\mathbb{R} \times P$.
\end{theorem}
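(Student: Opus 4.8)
The plan is to obtain this result from Theorem \ref{th:th1step1} by the involution $\xi_1 \leftrightarrow \xi_2$, $k_1 \leftrightarrow k_2$, which leaves invariant the kernel $\tilde K$, the factorisation (\ref{eqI0113}), the candidate domain $\hat H^+ \times (H^- \setminus \{-k_2\})$ and all of the growth estimates (\ref{eqI0107})--(\ref{eqI0109}); the only substantive change is that one starts from formula (\ref{eq:stupidformula2_prime}) --- the deformation of (\ref{eq:stupidformula2}) valid for ${\rm Im}(\xi_1) > 0$, ${\rm Im}(\xi_2) < 0$ --- in place of (\ref{eq:stupidformula1_prime}). Concretely, I would set
\[
J(\xi_1 , \xi_2) \equiv \int\limits_{-\infty}^{\infty}\mathd \xi_1' \int\limits_{-\infty}^{\infty}\mathd \xi_2' \,\frac{\gamma(\xi_2 , -\xi_1') \, \tilde K(\xi_1' , \xi_2') \, \tilde W(\xi_1' , \xi_2')}{(\xi_1' - \xi_1)(\xi_2' - \xi_2)},
\]
so that (\ref{eq:stupidformula2_prime}) reads $\tilde W = \gamma(\xi_2, \xi_1)\,J/(4\pi^2) + i\,\gamma(\xi_2, \xi_1)\,\gamma(\xi_2, k_1)/[(\xi_1+k_1)(\xi_2+k_2)]$, and then run, mutatis mutandis, the four-part argument of the proof of Theorem \ref{th:th1step1}.

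First I would establish analyticity of $J$ on the open set $\hat H^+ \times (H^- \setminus \{-k_2\})$. Convergence of the double integral is ensured by (\ref{eqI0107})--(\ref{eqI0109}); the polar factors $(\xi_1' - \xi_1)^{-1}$ and $(\xi_2' - \xi_2)^{-1}$ are regular because $\xi_1, \xi_2$ are non-real while $\xi_1', \xi_2'$ are real; $\tilde K$ and $\tilde W$ do not involve $\xi_1, \xi_2$; and the only delicate factor $\gamma(\xi_2 , -\xi_1') = \sqrt{\sqrt{k^2 - \xi_2^2} - \xi_1'}$ is non-singular, since $\pm k \notin H^-$ and, by Lemma \ref{lem:kappasign}, $\xi_2 \in H^-$ forces $\sqrt{k^2 - \xi_2^2} \in H^+$, which never takes the real value $\xi_1'$. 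Morera's theorem in two complex variables \cite{Shabat2} then gives analyticity of $J$ on this set, and the external factor $\gamma(\xi_2 , \xi_1) = \sqrt{\sqrt{k^2 - \xi_2^2} + \xi_1}$ is analytic there too, since $\sqrt{k^2 - \xi_2^2} \in H^+$ cannot equal $-\xi_1$ when $\xi_1 \in \hat H^+$.

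Analyticity of $J$ on the three boundary faces is then proved by the mirrored contour deformations. At a point of $\hat H^+ \times \mathbb{R}$ the obstruction is the factor $(\xi_2' - \xi_2)^{-1}$; since the integrand is analytic in the strip $|{\rm Im}(\xi_2')| < \kappa$ and $\gamma(\xi_2, -\xi_1')$ is independent of $\xi_2'$, one pushes the $\xi_2'$-contour up to $(-\infty + i\kappa', \infty + i\kappa')$ and then lets $\xi_2$ reach its real target from within $H^-$. At a point of $\mathbb{R} \times H^-$ one fixes the first coordinate at its target value in $H^-$ and pushes the $\xi_1'$-contour down; this is legitimate as soon as $\kappa'$ is small enough that the deflected branch locus of $\gamma(\xi_2, -\xi_1')$, a curve close to $h^-$ in the $\xi_2$-plane, misses the fixed point --- exactly as in the $H^- \times \mathbb{R}$ case of Theorem \ref{th:th1step1} --- after which $\xi_2$ may tend to the target. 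At a point of $\hat H^+ \times P$ --- the analogue of the face $P \times \hat H^+$ --- the branch point $\sqrt{k^2 - (\xi_2^\star)^2}$ of $\gamma(\xi_2^\star, -\xi_1')$ is real and sits on the $\xi_1'$-contour, so, having first moved to a generic $\xi_2 \in H^-$, one indents the $\xi_1'$-contour below $\sqrt{k^2 - (\xi_2^\star)^2}$ and above $-\sqrt{k^2 - (\xi_2^\star)^2}$, deflecting the induced singular locus in the $\xi_2$-plane away from $\xi_2^\star$ with its two shores kept apart, and then lets $\xi_2 \to \xi_2^\star$ from within $\hat H^+$. Continuity on the skeleton $\mathbb{R} \times P$ follows, as in the proof of Theorem \ref{th:th1step1}, from the splitting $\tilde W = \tilde W_1 + \tilde W_2$ with $\tilde W_2(\xi_1' , \xi_2') = \tilde W({\rm Re}(\xi_1), \xi_2')$: the corresponding $J_1$ has a removable singularity in $\xi_1'$ and is continuous, while $J_2$ is evaluated by the residue in $\xi_1'$ and reduces to a single integral that is continuous as $\xi_2 \to \xi_2^\star$.

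Finally, the external factor $\gamma(\xi_2, \xi_1)$ and the additive term of (\ref{eq:stupidformula2_prime}) are analysed directly: $\gamma(\xi_2, \xi_1)$ is analytic and continuous on the boundary by Lemma \ref{lem:kappasign}, while the additive term carries the simple pole at $\xi_2 = -k_2$, is the only singularity of $\tilde W$ in $\hat H^+ \times H^-$, and --- using $\gamma(-k_2, \xi_1) = \gamma(k_2, \xi_1)$ and $\gamma(-k_2, k_1) = \gamma(k_2, k_1)$ --- produces exactly the leading term (\ref{eqI0114a}) near $\hat H^+ \times \{-k_2\}$. I expect the main obstacle to be, as in the proof of Theorem \ref{th:th1step1}, the face $\hat H^+ \times P$: one must follow the motion of the branch set of $\gamma(\xi_2, -\xi_1')$ in the $\xi_2$-plane while indenting the $\xi_1'$-contour around the real branch point $\sqrt{k^2 - (\xi_2^\star)^2}$, and verify that the two shores of $P$ are deflected consistently, without pinching, so that $\xi_2^\star$ can be reached; all the remaining steps are a routine transcription of the already-established case.
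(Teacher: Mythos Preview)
Your proposal is correct and follows exactly the paper's approach: the paper does not spell out a proof of Theorem~\ref{th:th2step1} but simply states that it is the ``symmetrical'' theorem obtained from (\ref{eq:stupidformula2_prime}) by rerunning the four-part argument of Theorem~\ref{th:th1step1} with the roles of $\xi_1$ and $\xi_2$ (and $k_1,k_2$) interchanged. Two small transcription slips to fix: on the face $\mathbb{R}\times H^-$ it is the \emph{second} coordinate $\xi_2^\star\in H^-$ that is held fixed while $\xi_1$ tends to its real target (and the $\xi_1'$-contour is deformed), and on $\hat H^+\times P$ the limit $\xi_2\to\xi_2^\star$ is taken from within $H^-$, not $\hat H^+$.
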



\subsection{Second step of analytical continuation}

The second step of our analytical continuation will be based on a deformation of the
integration surface in (\ref{eq:stupidformula1_prime})
into a product $\mathbb{R} \times P$, resulting in the following theorem:


\begin{theorem}\label{th:th34}
The function $\tilde W$ obeys the following relation: 
\begin{eqnarray}
\tilde W(\xi_1 , \xi_2) & = & 
\frac{ \gamma(\xi_1, \xi_2)}{4\pi^2}
\int \limits_P  
\mathd \xi_2'
\int \limits_{-\infty}^{\infty} 
\mathd \xi_1' 
\frac{
\gamma(\xi_1 , -\xi_2') \, \tilde K (\xi_1' , \xi_2')\, \tilde W(\xi_1' , \xi_2')}{
(\xi_1' - \xi_1) (\xi_2' - \xi_2)} \nonumber \\ 
& + & 
\frac{i \gamma(\xi_1, \xi_2) \, \gamma(\xi_1, k_2) \, \gamma(k_2, k_1) }{
(\xi_1 + k_1)\, (\xi_2 + k_2) \, \gamma(k_2 , -\xi_1)}
,
\label{eq:stupidformula11}
\end{eqnarray}
where the left-hand side is defined in the domain 
$-\kappa < {\rm Im}(\xi_1) < 0$, 
$0 < {\rm Im}(\xi_2) < \kappa$.
\end{theorem}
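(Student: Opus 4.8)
The goal is to obtain formula~\eqref{eq:stupidformula11} by deforming the $\xi_2'$-integration contour in~\eqref{eq:stupidformula1_prime} from the real line $\mathbb{R}$ down onto the contour $P$ (which runs from $-i\infty$ up the right shore of $h^-$ to $-k$ and back down the left shore). The plan is to start from~\eqref{eq:stupidformula1_prime}, which is valid for ${\rm Im}(\xi_1)<0$, ${\rm Im}(\xi_2)>0$, and restrict to the sub-domain $-\kappa<{\rm Im}(\xi_1)<0$, $0<{\rm Im}(\xi_2)<\kappa$ where both~\eqref{eq:stupidformula1_prime} holds and the left-hand side of~\eqref{eq:stupidformula11} is to be defined. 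First I would examine, for fixed $\xi_1\in H^-$, the $\xi_2'$-integrand as a function of $\xi_2'$: it has the simple pole at $\xi_2'=\xi_2$ (with ${\rm Im}(\xi_2)>0$, i.e.\ \emph{above} $\mathbb{R}$, hence not crossed when we push the contour \emph{down}), the branch point of $\gamma(\xi_1,-\xi_2')=\sqrt{\sqrt{k^2-\xi_1^2}-\xi_2'}$ at $\xi_2'=\sqrt{k^2-\xi_1^2}$ which by Lemma~\ref{lem:kappasign} lies in $H^+$ (upper half-plane, again not crossed going down), and — crucially — the singularities coming from $\tilde K(\xi_1',\xi_2')\tilde W(\xi_1',\xi_2')$ viewed as a function of $\xi_2'$ after the inner $\xi_1'$-integral has been performed.

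The key step is to understand the analytic structure of $g(\xi_1',\xi_2')\equiv \tilde K(\xi_1',\xi_2')\tilde W(\xi_1',\xi_2')$ in the lower $\xi_2'$-half-plane. Writing $\tilde K = 1/(\gamma(\xi_1',\xi_2')\gamma(\xi_1',-\xi_2'))$ from~\eqref{eqI0113}, and using that $i\tilde U = \tilde K\tilde W$ with $\tilde U$ given by~\eqref{eqI0104}, one sees that $\tilde K\tilde W = -i\tilde U = -i/((\xi_1'+k_1)(\xi_2'+k_2)) - i\tilde U'(\xi_1',\xi_2')$. Since $\tilde U'$ is $3/4$-based it is — after the $\xi_1'$-integration along $\mathbb{R}$, which kills the part supported on $x_1>0$ leaving a function whose inverse transform in $x_2$ is supported on $x_2<0$ — analytic in the lower $\xi_2'$-half-plane, so it contributes no singularity when the contour is deformed downward. (More directly: Theorem~\ref{th:th1step1}, already proved, gives the analytic continuation of $\tilde W$, hence of $\tilde K\tilde W$, in $(H^-\setminus\{-k_1\})\times\hat H^+$, and combined with the symmetric object one controls the $\xi_2'$-behaviour; the square-root factors $\gamma(\xi_1',\pm\xi_2')$ in $\tilde K$ contribute the branch cut which forces the contour to wrap around $h^-$ rather than pass through it.) Thus deforming $\mathbb{R}\to P$ in the $\xi_2'$-variable, the only residue picked up is that of the \emph{explicit rational pole} at $\xi_2'=-k_2$, since $-k_2$ lies between $\mathbb{R}$ and $P$ (because ${\rm Re}(k_2)>0$ so $-k_2$ is in the left half-plane near the negative real axis, precisely the region swept out). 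Computing $\operatorname*{Res}_{\xi_2'=-k_2}$ of the inner-integrated integrand — the $\xi_1'$-integral at $\xi_2'=-k_2$ of $\gamma(\xi_1,k_2)\tilde K(\xi_1',-k_2)\tilde W(\xi_1',-k_2)/((\xi_1'-\xi_1)(-k_2-\xi_2))$ times $-2\pi i$ — and using the factorisation $\tilde K(\xi_1',-k_2)=1/(\gamma(\xi_1',-k_2)\gamma(\xi_1',k_2))$ together with the value of $\tilde W$ near its $\xi_1'=-k_1$-type structure, this residue collapses (via another contour evaluation / the explicit leading term~\eqref{eqI0114a}) to the closed-form additive term $i\gamma(\xi_1,\xi_2)\gamma(\xi_1,k_2)\gamma(k_2,k_1)/((\xi_1+k_1)(\xi_2+k_2)\gamma(k_2,-\xi_1))$ appearing in~\eqref{eq:stupidformula11}, while the remaining contour integral over $P$ gives the displayed integral term; one must also check that the original explicit additive term $i\gamma(\xi_1,\xi_2)\gamma(\xi_1,k_2)/((\xi_1+k_1)(\xi_2+k_2))$ from~\eqref{eq:stupidformula1_prime} is correctly absorbed/cancelled in this bookkeeping.

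\textbf{Main obstacle.} The hard part will be the second paragraph above: rigorously justifying that $\tilde K(\xi_1',\xi_2')\tilde W(\xi_1',\xi_2')$, after the inner $\xi_1'$-integration over $\mathbb{R}$, has no singularities in the strip between $\mathbb{R}$ and $P$ in the $\xi_2'$-plane \emph{except} the explicit pole at $\xi_2'=-k_2$, and controlling the decay of the integrand at $\xi_2'\to-i\infty$ along $P$ so that the contour deformation is legitimate and the integral over $P$ converges. This requires combining the growth estimates~\eqref{eqI0107}--\eqref{eqI0109}, the factorisation~\eqref{eqI0113} of $\tilde K$ (whose $\gamma(\xi_1',-\xi_2')$ factor is what produces the branch line $h^-$ one must wrap around), and the $3/4$-basedness of $\tilde U'$ which is precisely what guarantees lower-half-plane analyticity of the non-rational part after integrating out $\xi_1'$. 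A secondary technical point is verifying that the pole at $\xi_2'=\xi_2$ and the branch point at $\xi_2'=\sqrt{k^2-\xi_1^2}$ genuinely stay out of the swept region for all $\xi_1$ in the relevant strip and all $\xi_2$ with $0<{\rm Im}(\xi_2)<\kappa$ — this is where the restriction to $|{\rm Im}(\xi_{1,2})|<\kappa$ and Lemma~\ref{lem:kappasign} do the work, and it should be stated carefully but is not deep.
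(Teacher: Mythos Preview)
Your overall strategy --- start from \eqref{eq:stupidformula1_prime}, deform the $\xi_2'$-contour from $\mathbb{R}$ down to $P$, collect the residue at $\xi_2'=-k_2$, and check that this residue (after the $\xi_1'$-integration) combines with the old additive term to produce the new one --- is exactly the route the paper takes. The checks on the pole $(\xi_2'-\xi_2)^{-1}$ and on $\gamma(\xi_1,-\xi_2')$ via Lemma~\ref{lem:kappasign} match the paper as well.

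There is, however, a genuine gap in your first justification of analyticity of the integrand between $\mathbb{R}$ and $P$. You write $\tilde K\tilde W = -i\tilde U$ (sign slip: it is $+i\tilde U$) and argue that after the $[\,\cdot\,]_{-\circ}$ projection in $\xi_1'$ the $\tilde U'$ part ``has inverse transform in $x_2$ supported on $x_2<0$'', hence is analytic for ${\rm Im}(\xi_2')<0$. This is false: the $-\circ$ projection of a $3/4$-based function is supported on $Q_2\cup Q_3$ (that is, $x_1<0$ with $x_2$ of either sign), not on $x_2<0$, so no lower-half-plane analyticity in $\xi_2'$ follows. The paper avoids this by \emph{not} integrating in $\xi_1'$ first: it swaps the order, fixes a real $\xi_1'$, and invokes Theorem~\ref{th:th2step1} --- the \emph{symmetric} theorem, not Theorem~\ref{th:th1step1} which you cite --- to conclude that $\tilde W(\xi_1',\xi_2')$ is analytic in $\xi_2'\in H^-\setminus\{-k_2\}$ for each $\xi_1'\in\mathbb{R}$, while $\tilde K(\xi_1',\xi_2')$ has $\xi_2'$-singularities only on $\pm P$. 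Your parenthetical ``more directly\dots combined with the symmetric object'' is pointing in exactly the right direction; make that the main argument and discard the $3/4$-basedness one.

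For the residue step, the paper is more explicit than your sketch: the residue of the integrand at $\xi_2'=-k_2$ is read off from \eqref{eqI0114a} (Theorem~\ref{th:th2step1}), and the remaining $\xi_1'$-integral over $\mathbb{R}$ is then a Cauchy sum-split of a function whose only lower-half-plane pole is at $\xi_1'=-k_1$, evaluated by pole removal. This produces two terms; one cancels the old additive term of \eqref{eq:stupidformula1_prime} exactly, and the other is the new additive term in \eqref{eq:stupidformula11}. Your ``residue collapses via another contour evaluation'' is this computation --- carry it out using \eqref{eqI0114a} and \eqref{eqI0113}, and the cancellation you flag in your last sentence comes out automatically.
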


\noindent
\begin{remark}
The unknown function $\tilde W (\xi_1' , \xi_2')$ in the right-hand side 
is defined by Theorem~\ref{th:th1step1}. Namely, the values on $\mathbb{R} \times P$
are defined by continuity from the values defined by formula 
(\ref{eq:stupidformula1_prime}). 
This is not used, but for most of the points of $\mathbb{R} \times P$
(for all non-singular points) the values of $\tilde W$
can be found from integral of the form 
(\ref{eq:stupidformula1_prime}) with appropriate integration surfaces. 

The factor $\tilde K(\xi_1' , \xi_2')$ is singular at the points
of $\mathbb{R} \times P$  
where $\xi_1'^2 + \xi_2'^2 = k^2$, however, they produce an integrable singularity. 
The branch of $\tilde K(\xi_1' , \xi_2')$ is chosen by continuity 
(the choice is defined by physical reasons on $\mathbb{R} \times \mathbb{R}$, 
then, by continuity, on $\mathbb{R} \times P$). 
Thus the integral (\ref{eq:stupidformula11}) can be considered as an improper integral.
\end{remark}

\begin{proof}
Let us modify the continuation formula (\ref{eq:stupidformula1_prime}) as follows.  
Swap the order of integration, fix $\xi_1' \in \mathbb{R}$ and
deform the contour of integration in $\xi_2'$ from the real axis to~$P$.
Note that the function $\tilde W$ is analytic between the new and the old 
contour (apart from a pole at $\xi_2'=-k_2$, which will be taken into account later)
according to Theorem~\ref{th:th2step1}.

While the contour in $\xi_2'$ is deformed, it hits no singularities of the factor 
$\tilde K(\xi_1' , \xi_2')$, since for real $\xi_1'$ the singularities for $\xi_2'$
are located only on  $P$ and $-P$. 
The contour also does not hit singularities of $\gamma(\xi_1, -\xi_2')$ since, by Lemma~\ref{lem:kappasign}, $\sqrt{k^2 - \xi_1^2}$ belongs to $H^+$ for $-\kappa<{\rm Im}(\xi_1) <0$, and 
$\xi_2'$ belongs to~$H^-$.
Finally, the contour does not hit the singularity of the polar factor 
$(\xi_2' - \xi_2)^{-1}$ for an obvious reason.

Thus, the contour deformation (taking into account an additional loop around the pole 
$-k_2$) obeys the condition of 1D Cauchy's theorem, and it does not change the integral.  

When such a deformation is made, the contour hits this pole at $\xi_2' = - k_2$
(and no other singularity of the integrand). 
The residue of the integrand at that point can be obtained by Theorem \ref{th:th2step1}, and is
\[
\frac{-i\gamma(\xi_1,k_2) \tilde K(\xi_1',-k_2) \gamma(k_2,\xi_1') \gamma(k_2,k_1)}{(k_2+\xi_2)(\xi_1'+k_1)(\xi_1'-\xi_1)}.
\] 
It is then necessary to integrate $-2i\pi$ times this residue over $\xi_1'$. It turns out that the integral is just a Cauchy sum-split integral of a function that only has a pole at $-k_1$ in the $\xi_1'$ plane. The split can hence be performed explicitly by the pole removal technique. This leads to two terms as follows
\[ 
\frac{i \gamma(\xi_1, \xi_2) \, \gamma(\xi_1, k_2) \, \gamma(k_2, k_1) }{
(\xi_1 + k_1)\, (\xi_2 + k_2) \, \gamma(k_2 , -\xi_1)}
-
\frac{i \gamma(\xi_1, \xi_2) \, \gamma(\xi_1, k_2) }{
(\xi_1 + k_1)\, (\xi_2 + k_2) },
\]
where the right part of the equality (\ref{eqI0113}) has been used. The second term cancels with the second term of (\ref{eq:stupidformula1}) and the theorem is proved.
\end{proof}

The domain of validity of formula (\ref{eq:stupidformula11}) given in Theorem \ref{th:th34} intersects with the domain of natural analyticity 
of $\tilde W$, i.e.\ ${\rm Im}(\kappa_1) > -2\kappa $, ${\rm Im}(\kappa_2) > -2\kappa$. 
Thus, formula (\ref{eq:stupidformula11}) can provide an analytical continuation 
of $\tilde W$. 
Theorems~\ref{th:th1step1} and~\ref{th:th2step1} perform 
a continuation into the domains 
 ${\rm Im}(\xi_1) \le 0$,  ${\rm Im}(\xi_2) \ge 0$ and 
 ${\rm Im}(\xi_1) \ge 0$,  ${\rm Im}(\xi_2) \le 0$ with some cuts. 
Here our aim is to continue this function into the domain 
${\rm Im}(\xi_1) \le 0$,  ${\rm Im}(\xi_2) \le 0$
(also with some cuts). 

We find that it is convenient to study a continuation of $-i \tilde K \, \tilde W$ instead of $\tilde W$. 
Indeed, these functions are the same up to a factor known explicitly. The required continuation is obtained from the following theorem. 

\begin{theorem}\label{th:th7}
The function $\tilde U$ defined by 
\[
\tilde U (\xi_1, \xi_2) \equiv - i \tilde K (\xi_1, \xi_2)\, \tilde W (\xi_1, \xi_2)
\]
 can be analytically continued to the domain 
$(H^- \setminus \{- k_1\}) \times (H^- \setminus \{- k_2 \})$. The residues of
$\tilde U$ at $\xi_1 = - k_1$ and $\xi_2 = - k_2$ are given by the following asymptotics:
\begin{equation}
\tilde U(\xi_1 , \xi_2) = 
\frac{1}{\xi_1 + k_1} 
\frac{\gamma(k_1 , k_2)}{\gamma(k_1, -\xi_2)\, (\xi_2 + k_2)} + O((\xi_1 + k_1)^0)
\label{eqI0115}
\end{equation}
\begin{equation}
\tilde U(\xi_1 , \xi_2) = 
\frac{1}{\xi_2 + k_2} 
\frac{\gamma(k_2 , k_1)}{\gamma(k_2, -\xi_1)\, (\xi_1 + k_1)} + O((\xi_2 + k_2)^0)
\label{eqI0116}
\end{equation}
The function is continuous at $P \times P$
\end{theorem}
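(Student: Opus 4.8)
The plan is to use the continuation formula (\ref{eq:stupidformula11}) of Theorem \ref{th:th34} as the primary tool, exactly as Theorems \ref{th:th1step1} and \ref{th:th2step1} used (\ref{eq:stupidformula1_prime}) and (\ref{eq:stupidformula2_prime}). Since $\tilde U = -i\tilde K\,\tilde W$ and by the factorisation (\ref{eqI0113}) we have $\tilde K(\xi_1,\xi_2) = 1/(\gamma(\xi_1,\xi_2)\gamma(\xi_1,-\xi_2))$, multiplying (\ref{eq:stupidformula11}) by $-i\tilde K$ cancels the external factor $\gamma(\xi_1,\xi_2)$ and leaves a $1/\gamma(\xi_1,-\xi_2)$ out front. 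So the first step is to write
\[
\tilde U(\xi_1,\xi_2) = \frac{-i}{4\pi^2\,\gamma(\xi_1,-\xi_2)}\int\limits_P \mathd\xi_2' \int\limits_{-\infty}^{\infty}\mathd\xi_1'\,\frac{\gamma(\xi_1,-\xi_2')\,\tilde K(\xi_1',\xi_2')\,\tilde W(\xi_1',\xi_2')}{(\xi_1'-\xi_1)(\xi_2'-\xi_2)} + \frac{\gamma(\xi_1,k_2)\gamma(k_2,k_1)}{(\xi_1+k_1)(\xi_2+k_2)\gamma(\xi_1,-\xi_2)\gamma(k_2,-\xi_1)},
\]
valid a priori on the small polystrip $-\kappa<\mathrm{Im}(\xi_1)<0$, $0<\mathrm{Im}(\xi_2)<\kappa$, and then argue that the right-hand side extends analytically to all of $(H^-\setminus\{-k_1\})\times(H^-\setminus\{-k_2\})$.

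The core of the argument is the term-by-term analyticity check on the integrand, mirroring the "Domain of analyticity of $J$" paragraph in the proof of Theorem \ref{th:th1step1}. First I would check the external prefactor $1/\gamma(\xi_1,-\xi_2) = 1/\sqrt{\sqrt{k^2-\xi_1^2}+\xi_2}$: for $\xi_1\in H^-$, Lemma \ref{lem:kappasign} gives $\sqrt{k^2-\xi_1^2}\in H^+$, and for $\xi_2\in H^-$ the sum $\sqrt{k^2-\xi_1^2}+\xi_2$ avoids zero and the branch cut (one needs to verify $H^+ + H^- $ stays away from the negative real axis, which follows from the geometry of $h^\pm$), so this factor is analytic and non-vanishing. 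Inside the integral, the polar factor $(\xi_1'-\xi_1)^{-1}$ is harmless since $\xi_1'\in\mathbb{R}$ while $\xi_1$ is not; the factor $(\xi_2'-\xi_2)^{-1}$ needs the contour $P$ to be deformed away from $\xi_2$ as $\xi_2$ approaches $H^-$ — here one exploits that $P$ can be pushed slightly and that $\xi_2\in H^-$ means $\xi_2\notin h^-$; the factor $\gamma(\xi_1,-\xi_2')$ is analytic because for $\xi_1\in H^-$ its branch locus sits on $P$ while $\xi_2'$ runs along $P$ itself — this is the delicate point and requires the same indentation trick used in the "$P\times\hat H^+$" step of the proof of Theorem \ref{th:th1step1}. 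The additive term is then analysed directly: its only singularities in the target domain are the simple poles at $\xi_1=-k_1$ and $\xi_2=-k_2$, and expanding near each pole and using $\gamma(\xi_1,k_2)|_{\xi_1=-k_1}=\gamma(k_1,k_2)$ etc.\ yields precisely the residue asymptotics (\ref{eqI0115}) and (\ref{eqI0116}).

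For analyticity on the boundary strata $P\times H^-$, $\mathbb{R}\times H^-$, $H^-\times P$, $H^-\times\mathbb{R}$ and continuity on the skeleton $P\times P$, I would invoke Theorems \ref{th:th1step1} and \ref{th:th2step1} rather than reprove everything: on $\mathbb{R}\times H^-$ and $H^-\times\mathbb{R}$ the relation $\tilde U=-i\tilde K\,\tilde W$ with $\tilde W$ already known to be analytic there (and $\tilde K$ analytic off $\xi_1^2+\xi_2^2=k^2$, which doesn't meet these strata) does the job; on the $P$-strata and on $P\times P$ one uses the integral representation together with a contour-deformation/removable-singularity splitting of $\tilde W$ exactly as in the "Continuity of $J$ on the skeleton" paragraph. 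I expect the main obstacle to be the bookkeeping for the $\gamma(\xi_1,-\xi_2')$ branch locus when both $\xi_1$ and $\xi_2$ are driven into $H^-$ simultaneously: one must show the contour $P$ in $\xi_2'$ and the curve $P$ (the image of the branch points of $\gamma(\xi_1,\cdot)$ as $\xi_1$ ranges over $H^-$) can be kept disjoint, and that deforming $P$ to achieve this does not sweep across the pole $\xi_2'=-k_2$ in an uncontrolled way — this is the genuinely two-dimensional part of the argument and is where Lemma \ref{lem:kappasign} is used most essentially.
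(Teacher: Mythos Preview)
Your approach is the same as the paper's, and the formula you write for $\tilde U$ is exactly what the paper derives. However, you have misidentified where the difficulty lies, and this would lead you into unnecessary work.

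The whole point of having deformed the $\xi_2'$ contour from $\mathbb{R}$ to $P$ in Theorem \ref{th:th34} is that the analysis of $\gamma(\xi_1,-\xi_2')$ becomes \emph{trivial} rather than delicate: for $\xi_1\in H^-$, Lemma \ref{lem:kappasign} gives $\sqrt{k^2-\xi_1^2}\in H^+$, while $\xi_2'\in P$ lies in the closure of the lower half-plane, so the branch point $\xi_2'=\sqrt{k^2-\xi_1^2}$ is never met and no indentation is required. Your claim that the branch locus ``sits on $P$'' is backwards --- it sits in $H^+$, away from~$P$. Similarly, the polar factor $(\xi_2'-\xi_2)^{-1}$ is automatically regular for $\xi_2$ in the open set $H^-$ and $\xi_2'\in P\subset\partial H^-$; no deformation of $P$ is needed for the interior. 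The ``genuinely two-dimensional'' obstacle you anticipate in your final paragraph does not arise: the paper disposes of the analyticity of $J$ on $H^-\times H^-$ in one short paragraph using only Lemma \ref{lem:kappasign}.

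Note also that the paper explicitly \emph{skips} analyticity on the boundary strata $P\times H^-$ and $H^-\times P$ (see the Remark following the statement); only continuity on $P\times P$ is claimed, and that is handled by the same splitting-of-$\tilde W$ trick as in Theorem \ref{th:th1step1}.
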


\begin{remark}
Similarly to what has been done for Theorem~\ref{th:th1step1}, one can prove that 
$\tilde U$ is analytic on the parts of the boundary 
$(H^- \setminus \{ - k_1\}) \times P$ and $P \times (H^- \setminus \{ - k_2\})$.
However, we do not need this result and skip the corresponding argument. The continuity 
of $\tilde U$ on $P \times P$ is still important. It is understood in the sense that 
for any path in $(H^- \setminus \{ - k_1\})\times (H^- \setminus \{ - k_2\})$
ending at some point of $P \times P$ there exists a limit depending only on the 
ending point to which $\tilde U$ tends along this path.
\end{remark} 

\begin{proof} Use the central part of the equality (\ref{eqI0113}) to rewrite the formula (\ref{eq:stupidformula11}) as follows: 
\begin{eqnarray}
 \tilde U(\xi_1 , \xi_2) & = & 
\frac{ 1 }{4\pi^2 i \, \gamma(\xi_1, -\xi_2)}
\int \limits_P  
\mathd \xi_2'
\int \limits_{-\infty}^{\infty} 
\mathd \xi_1' 
\frac{
\gamma(\xi_1 , -\xi_2') \, \tilde K (\xi_1' , \xi_2')\, \tilde W(\xi_1' , \xi_2')}{
(\xi_1' - \xi_1) (\xi_2' - \xi_2)} \nonumber \\ 
&+& \frac{ \gamma(\xi_1, k_2) \, \gamma(k_2, k_1) }{
(\xi_1 + k_1)\, (\xi_2 + k_2) \, \gamma(k_2 , -\xi_1) \, \gamma(\xi_1, -\xi_2) },
\label{eqI0117}
\end{eqnarray}
and consider the integral 
\[
J(\xi_1 , \xi_2) \equiv
\int \limits_P  
\mathd \xi_2'
\int \limits_{-\infty}^{\infty} 
\mathd \xi_1' 
\frac{
\gamma(\xi_1 , -\xi_2') \, \tilde K (\xi_1' , \xi_2')\, \tilde W(\xi_1' , \xi_2')}{
(\xi_1' - \xi_1) (\xi_2' - \xi_2)}. 
\]
First, continue it from the domain $-\kappa < {\rm Im}(\xi_1) < 0$, 
$0 < {\rm Im}(\xi_2) < \kappa$ to the domain 
$-\kappa < {\rm Im}(\xi_1) < 0$, 
$-\kappa < {\rm Im}(\xi_2) < \kappa$. This continuation causes no problem since 
the only factor in the integrand depending on $\xi_2$
is $(\xi_2' - \xi_2)^{-1}$ and it is regular if $\xi_2' \in P$. 

Then consider $J(\xi_1, \xi_2)$
with $\xi_1$ and $\xi_2$ varying in the domain $H^-$. 
This will provide analytical continuation of $J$ into
$H^- \times H^-$. 
As in the proof of Theorem \ref{th:th1step1}, 
we need to make sure that the integrand is analytic.
Using Lemma \ref{lem:kappasign}, we conclude that the factor $\gamma(\xi_1,-\xi_2')$ is 
analytic for $\xi_1 \in H^-$, $\xi_2 \in P$. The polar factors are also 
regular for $\xi_{1,2} \in H^-$, $\xi_1' \in \mathbb{R}$, $\xi_2' \in P$, and since $\tilde{K}(\xi_1',\xi_2')\tilde{W}(\xi_1',\xi_2')$ does not depend on $\xi_{1,2}$, this proves the analyticity of $J$.


The external factor $\gamma^{-1}(\xi_1,-\xi_2)$ is also analytic on $H^-\times H^-$ by 
Lemma~\ref{lem:kappasign} ($\sqrt{k_2 - \xi_1^2} \in H^+$, while $\xi_2 \in H^-$).
It should be noted that we changed the unknown from $\tilde W$ to $\tilde U$ in this theorem 
(comparatively, say, with Theorem~\ref{th:th1step1}) to prevent the external 
factor from having singularities in the domain of continuation. 

We now need to consider the explicit additive term. One can see once more due to Lemma \ref{lem:kappasign} that $\gamma(\xi_1,-\xi_2)$, $\gamma(k_2,-\xi_1)$ and $\gamma(\xi_1,k_2)$ are analytic. Hence the only singularities of this additive term in $H^- \times H^-$ are the simple poles $\xi_1 =- k_1$ and $\xi_2 = -k_2$, leading to the correct asymptotic behaviour (\ref{eqI0115}) and (\ref{eqI0116}).

The continuity on $P\times P$ can be proven as in Theorem~\ref{th:th1step1}. The problematic term comes from the polar factor $(\xi_2'-\xi_2)$ and can be dealt with by decomposing $\tilde{W}$ into two parts, a regular one and one that can be studied explicitly.


\end{proof}



\section{Additive crossing of branch lines} \label{sec:additivecrossingmain}

\subsection{Singular 2-lines}

Unlike the 1D case, in 2D complex analysis the basic singularities are not
isolated points, but (analytic) manifolds of real dimension~2 and of 
complex dimension~1. Below we call these manifolds {\it 2-lines}. 
Among these singularities, we are interested in 
simple poles and branch 2-lines. 

A primitive way to reveal the type of the singularity 
of a function $f$
is to introduce the 
local complex 
coordinates near some point of the singularity, one tangential and one transversal
$(\tau , \nu)$, 
fix the value of the tangential coordinate $\tau$ and see what
happens with $f$ as a function of a single variable $\nu$ at the singularity
$\nu = 0$.
For polar 2-lines it will be a pole, and for branch lines of order~$m$
it will be a branch point of order~$m$.\footnote{The rigorous definition of the order of a branch 2-line requires to think in terms of the fundamental group $\pi_1$ of manifolds (see \cite{Vol2}), which we omit here for brevity.}

According to Theorems~\ref{th:th1step1} and~\ref{th:th7}, we can 
state that $\xi_1 = - k_1$ and $\xi_2 = - k_2$ are polar 2-lines of 
$\tilde W(\xi_1 , \xi_2)$, 
and some fragments of the ``circle'' $\xi_1^2 + \xi_2^2 = k^2$ are branch 2-lines 
of order~2.  
These singularities are not unexpected. The polar lines are the poles of the 
right-hand side of the Wiener--Hopf equation, and the branch 2-line is that of the coefficient 
of the equation. The same behavior is demonstrated by the solution 
of the 1D Wiener--Hopf equation. 

What is new in the 2D case, is the appearance of the branch 2-lines
$\xi_1 = - k$ and $\xi_2 = -k$. Note that in Theorem \ref{th:th7} we could prove the analyticity of $\tilde U$ in a product of domains cut along the lines 
$h^-$
going 
from~$-k$. A considerably more sophisticated analysis 
shows that $\xi_1 = - k$ and $\xi_2 = -k$ are branch lines 
of order~2 (we will actually not use this fact in our consideration). Some important remarks about the link between the singularities of $\tilde W$
and the properties of the wave field can be found in Appendix~\ref{app:singvswavefield}. 

We will now show that there is an important concept related to the lines,
$\xi_1 = - k$ and $\xi_2 = -k$, namely the concept of additive crossing 
of branch 2-lines. The next section is dedicated to this concept.

\subsection{The concept of additive crossing}\label{sec:additivecrossing}

Let $\eta_1$ and $\eta_2$ be some (local) complex variables, and let $\eta_1 = 0$ and 
$\eta_2 =0$ be branch 2-lines of a function $f(\eta_1, \eta_2)$.
Let $\eta_1 = 0$ be a branch 2-line of order $m_1$, and 
$\eta_2 =0$ be a branch 2-line of order~$m_2$. 

Consider the cuts $\chi_1$ and $\chi_2$ in the complex planes $\eta_1 $ and $\eta_2$, and define the left and the right shores of the cuts as 
shown in Fig.~\ref{fig06}.  

 \begin{figure}[ht]
 \centerline{\epsfig{file=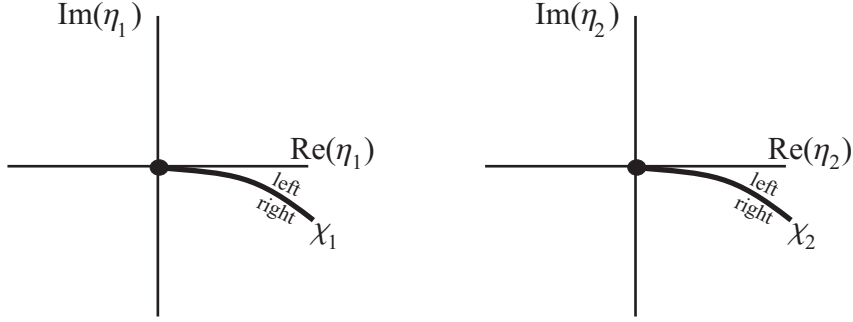}}
 \caption{Local coordinates $\eta_{1,2}$}
 \label{fig06}
 \end{figure}

Let $f$ be (locally) represented in the form
\begin{equation}
f(\eta_1 , \eta_2)  = \phi_1 (\eta_1 , \eta_2) + \phi_2 (\eta_1 , \eta_2)
\label{eqI0122}
\end{equation}
where $\phi_1$ has no branching about $\eta_2 =0$, and $\phi_2$ has no branching 
about $\eta_1 = 0$.
Let us now consider $\eta_1 \in \chi_1$ and $\eta_2 \in \chi_2$ and
let $\eta_{1,2}^{\rm l,r}$ be these variables taken on the left/right shore of the corresponding cut. It is possible to write down the values of $f$ on different 
shores of the cuts as follows: 
\[
f(\eta_{1}^{\rm l} , \eta_2^{\rm l})  =
\phi_1 (\eta_1^{\rm  l} , \eta_2) + \phi_2 (\eta_1 , \eta_2^{\rm l}) ,
\]
\[
f(\eta_{1}^{\rm l} , \eta_2^{\rm r})  =
\phi_1 (\eta_1^{\rm  l} , \eta_2) + \phi_2 (\eta_1 , \eta_2^{\rm r}) ,
\]
\[
f(\eta_{1}^{\rm r} , \eta_2^{\rm l})  =
\phi_1 (\eta_1^{\rm  r} , \eta_2) + \phi_2 (\eta_1 , \eta_2^{\rm l}) ,
\] 
\[
f(\eta_{1}^{\rm r} , \eta_2^{\rm r})  =
\phi_1 (\eta_1^{\rm  r} , \eta_2) + \phi_2 (\eta_1 , \eta_2^{\rm r}) .
\] 
When there is no upper index ``l'' or ``r'', it means that the function 
is regular on the cut, and the index does not matter. One can see directly that we have 
\begin{equation}
f(\eta_{1}^{\rm l} , \eta_2^{\rm l}) + 
f(\eta_{1}^{\rm r} , \eta_2^{\rm r})
=
f(\eta_{1}^{\rm l} , \eta_2^{\rm r}) + 
f(\eta_{1}^{\rm r} , \eta_2^{\rm l}).
\label{eq:additive}
\end{equation}
This property is the origin of the following definition. 

\begin{definition}
\label{def:additivecrossing} We say that a function $f(\eta_1,\eta_2)$ with branch 2-lines at $\eta_1=0$ and $\eta_2=0$ has the additive crossing property if the equation (\ref{eq:additive}) holds for some appropriate cuts.
\end{definition}

We have hence seen that if $f$ can be locally represented by (\ref{eqI0122}), then it has the additive crossing property. 

Conversely, assume now that a function $f$ has the additive crossing property, i.e. that (\ref{eq:additive}) is valid. If we also assume that $f$ can be represented as a Puiseux series in some vicinity of the origin as follows: 
\begin{equation}
f(\eta_1 , \eta_2) = 
\sum_{n_1, n_2 \in \mathbb{Z}} a_{n_1 , n_2} \eta_1 ^{n_1 / m_1} \eta_2 ^{n_2 / m_2},
\label{eq:powser}
\end{equation}
then, writing $\eta_{1,2}^{\rm l}=r_{1,2} e^{i \theta_{1,2}}$ and $\eta_{1,2}^{\rm r}=r_{1,2} e^{i (\theta_{1,2}+2\pi)}$, for each term of (\ref{eq:powser}), the additive crossing property (\ref{eq:additive}) leads to 
\begin{equation}
1 + e^{2 \pi i n_1 / m_1 } e^{2 \pi i n_2 / m_2 } = 
e^{2 \pi i n_1 / m_1 } +  e^{2 \pi i n_2 / m_2 }.
\label{eqI0123}
\end{equation}
This implies that 
\[
(1- e^{2 \pi i n_1 / m_1 })(1- e^{2 \pi i n_2 / m_2 }) = 0,
\]
and hence, each allowed term should have no branching either about $\eta_1=0$ or about $\eta_2 = 0$. Therefore $f$ can be locally represented by (\ref{eqI0122}), with each term of the series belonging either to $\phi_2$ or to $\phi_1$.

\begin{remark} Note that, strictly speaking, it is not necessary for $\eta_1=0$ and $\eta_2=0$ to be branch 2-lines of the function $f$ in order to define the concept of additive crossing. It is indeed sufficient to require $f(\eta_1 , \eta_2)$
to be analytic in a domain $(D \setminus \chi_1) \times (D\setminus \chi_2)$, 
where $D$ is some neighbourhood of the origin of a 1D complex plane, such that $f$ is continuous on the sides of the cuts (to make $f (\eta_1^{\rm l,r} , \eta_2^{\rm l,r})$ well defined).
Then the same relation (\ref{eq:additive}) will be called the additive crossing property 
of $f$. In our case we are going to establish the additive crossing property 
for
\begin{eqnarray*}
\eta_{1,2} = \xi_{1,2} + k, & f(\eta_1 , \eta_2)=\tilde U' (\eta_1 - k , \eta_2 - k), & \chi_{1,2} = h^- + k.
\end{eqnarray*}
\end{remark}

\subsection{Deformation of the surface of integration for $u(x_1, x_2, 0)$} 

Consider the function $\tilde U'$ defined by (\ref{eqI0106}) and
introduce its inverse Fourier transform $v(x_1,x_2)$
as per (\ref{eqI0105}):  
\begin{equation}
v(x_1, x_2) \equiv \frac{1}{4\pi^2} 
\int \limits_{-\infty}^{\infty}
\int \limits_{-\infty}^{\infty}
\tilde U' (\xi_1 , \xi_2) e^{- i (\xi_1 x_1 + \xi_2 x_2)} 
\mathd \xi_1  \mathd \xi_2
\label{eqI0124}
\end{equation} 
and note that 
\[
v(x_1 , x_2) = u(x_1, x_2, 0) \quad \mbox{ for } \quad (x_1, x_2) \in (Q_2 \cup Q_3 \cup Q_4),
\]
 and 
\[ 
v(x_1 , x_2) = u(x_1, x_2, 0^+) - u^{\rm in} (x_1, x_2, 0)  
\quad \mbox{ for } \quad (x_1, x_2) \in Q_1 . 
\]
Therefore, according to the boundary condition (\ref{eq:BCDi}), it should be equal to zero if $x_1 > 0$ and $x_2 >0$. Let us consider $x_1 > 0$ and $x_2 > 0$ up to the end of this section.

The exponential 
factor of the Fourier transform, $e^{- i (\xi_1 x_1 + \xi_2 x_2) }$,
decays in the domain ${\rm Im}(\xi_1) < 0$, ${\rm Im}(\xi_2) <0$. The function $\tilde U'(\xi_1 , \xi_2)$ has two polar 2-lines $\xi_1 = -k_1$ and $\xi_2 = -k_2$. Let us introduce two small loops $\sigma_1$ and $\sigma_2$ encircling the points $-k_1$ and $-k_2$ in the anti-clockwise direction. 

In the integral  (\ref{eqI0124}), 
deform first the $\xi_1$ real axis into the contour $P$, and then the $\xi_2$ real axis into~$P$\footnote{In order to be precise, extra care should be taken when doing this transformation. Some additional steps involving intermediate contours should be added. However, for brevity, we do not provide all the details here.}. At each step, the usual 1D Cauchy's theorem is used to ensure that the integral preserves its value. The poles are taken care of by using the loops $\sigma_{1,2}$, resulting in: 
\begin{equation}
v(x_1, x_2) = \frac{1}{4\pi^2} 
\int_P
\int_P
\tilde U' (\xi_1 , \xi_2) e^{- i (\xi_1 x_1 + \xi_2 x_2)} 
\mathd \xi_1 \, \mathd \xi_2 +
S_1 + S_2 + S_{12},
\label{eqI0125}
\end{equation} 
where 
\begin{equation}
S_1 = -
 \frac{1}{4\pi^2} 
\int_P
\int_{\sigma_1}
\tilde U' (\xi_1 , \xi_2) e^{- i (\xi_1 x_1 + \xi_2 x_2)} 
\mathd \xi_1  \, \mathd \xi_2 \, ,
\label{eqI0126}
\end{equation}
\begin{equation}
S_2 = -
 \frac{1}{4\pi^2} 
\int_{\sigma_2}
\int_P
\tilde U' (\xi_1 , \xi_2) e^{- i (\xi_1 x_1 + \xi_2 x_2)} 
\mathd \xi_1  \, \mathd \xi_2 \, ,
\label{eqI0127}
\end{equation}
\begin{equation}
S_{12} = 
 \frac{1}{4\pi^2} 
\int_{\sigma_2}
\int_{\sigma_1}
\tilde U' (\xi_1 , \xi_2) e^{- i (\xi_1 x_1 + \xi_2 x_2)} 
\mathd \xi_1  \, \mathd \xi_2 \, .
\label{eqI0128}
\end{equation}
Consider the term $S_1$ and use (\ref{eqI0115}) and (\ref{eqI0106}) to compute the integral about the pole 
for each $\xi_2 \in P$ to get
\begin{equation}
S_1 = \frac{1}{2\pi i} \int_P
\left(
\frac{\gamma(k_1,k_2)}{\gamma(k_1 , -\xi_2)}
-1
\right) \frac{e^{i k_1 x_1 - i \xi_2 x_2}}{\xi_2 + k_2} \mathd \xi_2. 
\label{eqI0129}
\end{equation} 
Since the integrand has no branching at $\xi_2 = - k$ nor anywhere on $h^-$ (by Lemma \ref{lem:kappasign}), the integral is equal to zero and hence 
$S_1 = 0$. Similarly, we can show that $S_2 =0$. 

Finally, compute the double residue $S_{12}$.  One can see that the double residue 
of $\tilde U$ coming from (\ref{eqI0115}) is compensated with the double residue of the second term of (\ref{eqI0106}), and hence we have $S_{12} =0$. Thus, 

\begin{equation}
v(x_1, x_2) = \frac{1}{4\pi^2} 
\int_P
\int_P
\tilde U' (\xi_1 , \xi_2) e^{- i (\xi_1 x_1 + \xi_2 x_2)} 
\mathd \xi_1 \, \mathd \xi_2 .
\label{eqI0130}
\end{equation} 

The integral (\ref{eqI0130}) can be interpreted in terms of the additive crossing property of the 2-lines  $\xi_1 = -k$ and $\xi_2 = -k$. 
Consider $h^-$ as a (directed) contour going from $-k$ to~$- i \infty$.
The contour $P$ consists of two parts: one goes along the right shore of $h^-$ in the 
negative direction, and another goes along the left shore in the positive direction (see Fig.~\ref{fig02} (right) and Fig.~\ref{fig06} ). 
Thus, one can rewrite (\ref{eqI0130}) as follows: 
\begin{eqnarray}
v(x_1, x_2) &=& \frac{1}{4\pi^2} 
\int_{h^-}
\int_{h^-}
(
\tilde U' (\xi_1^{\rm l} , \xi_2^{\rm l}) 
+  
\tilde U' (\xi_1^{\rm r} , \xi_2^{\rm r}) 
-
\tilde U' (\xi_1^{\rm l} , \xi_2^{\rm r}) 
-  
\tilde U' (\xi_1^{\rm r} , \xi_2^{\rm l})
) 
\times \nonumber \\
& & 
e^{- i (\xi_1 x_1 + \xi_2 x_2)} 
\mathd \xi_1 \, \mathd \xi_2 .
\label{eqI0131}
\end{eqnarray} 
One can see that $v(\xi_1 , \xi_2)$ is equal to 0 if 
\begin{equation}
\tilde U' (\xi_1^{\rm l} , \xi_2^{\rm l}) 
+  
\tilde U' (\xi_1^{\rm r} , \xi_2^{\rm r}) 
-
\tilde U' (\xi_1^{\rm l} , \xi_2^{\rm r}) 
-  
\tilde U' (\xi_1^{\rm r} , \xi_2^{\rm l}) 
= 0,
\label{eqI0132}
\end{equation}
i.e.\ if the 2-lines
$\xi_1= - k$ and $\xi_2 = - k$ of the function $\tilde U'$ have the additive crossing property. Hence additive crossing implies 3/4-basedness. 

Conversely, a 2D uniqueness theorem (see Appendix \ref{app:laplace}) can be applied to the 
integral (\ref{eqI0131}) to get that if $v(x_1 , x_2) =0$ in $Q_1$ then 
(\ref{eqI0132}) is fulfilled. 
Thus, we have obtained an equivalence between the 3/4-basedness of $\tilde U'$ and the additive crossing property of the 2-lines
$\xi_1 = -k$ and $\xi_2 = -k$. 
This will allow us to reformulate the functional problem of Section~\ref{sec:functionalproblem}.


\subsection{Reformulation of the functional problem}\label{sec:reformulationadditive}

The main result of the present paper is a reformulation of the functional problem from 
Section~\ref{sec:functionalproblem}. 
Consider the formulation of Theorem \ref{th:formulationSec2}.  
According to the content of this paper, the first two items of the theorem can be replaced by the following four conditions. The theorem remains valid after the replacement.

\begin{itemize}

\item[1'.]
$\tilde W(\xi_1 , \xi_2)$ is analytic in the domain 
\[
(\hat H^+ \times (\hat H^+ \cup H^- \setminus \{ -k_2 \})) 
\cup 
((\hat H^+ \cup H^- \setminus \{ -k_1 \}) \times \hat H^+).
\]

\item[1''.]
$\tilde W(\xi_1 , \xi_2)$ has poles at $\xi_1 = - k_1$, $\xi_2 = - k_2$, the residues of which are defined by (\ref{eqI0114}) and (\ref{eqI0114a}).

\item[2'.]
$\tilde U'(\xi_1 , \xi_2)$, as defined by (\ref{eqI0106}), is analytic in the domain 
\[
(H^- \setminus \{ -k_1 \}) \times (H^- \setminus \{ -k_2 \})
\]

\item[2''.] $\tilde U'(\xi_1,\xi_2)$ has the additive crossing property
for the 2-lines 
$\xi_1 = -k$ and $\xi_2 = - k$ with the cuts $h^-$.
\end{itemize}

Note that the residues (\ref{eqI0115}) and (\ref{eqI0116}) of $\tilde U'$ at $\xi_1 = -k_1$ and $\xi_2 = -k_2$ 
can be obtained by continuation of the residues of $\tilde W$. Hence, strictly speaking, it is not necessary to include them in the formulation of the functional problem.   

\section{Concluding remarks}
An important step of the usual 1D Wiener--Hopf method is to draw conclusions about the analyticity properties of unknown functions originally defined by half-range Fourier transforms. This is exactly what we have done here in a 2D context, and this is why we believe that this final reformulation is important. Indeed we drew some conclusions about the domain of analyticity of unknown functions originally defined by 1/4 and 3/4 range Fourier transforms. 

We established analyticity of the unknown functions $\tilde W$
and $\tilde U'$ in domains totally forming the $\mathbb{C} \times \mathbb{C}$
space (which agrees with the Wiener--Hopf concept), however, unfortunately, this space contains the branch 2-lines $\xi_1 = -k$, $\xi_2 = - k$. 
Almost nothing is known about this branching and hence, formally, at this level of understanding, the Liouville theorem (which is valid in $\mathbb{C}^2$, see e.g. \cite{Shabat2}) cannot be applied, and thus the Wiener--Hopf method cannot be completed. 
We plan to demonstrate in a subsequent paper that the additive crossing property of the branch 2-lines is in fact a very strong condition, and that some important physical features (such as the vertex asymptotics for example) can be recovered from it. 

\section*{Acknowledgements}

The work of A.V. Shanin has been supported by Russian Science Foundation grant
RNF~14-22-00042. R.C. Assier would like to acknowledge the support by UK EPSRC (EP/N013719/1).


\bibliography{biblio}
\bibliographystyle{unsrt}

\appendix
\counterwithin{figure}{section}

\section{Proof of Theorem \ref{th:th3}} \label{app:proofth3}

In this appendix, for simplicity, we will use the notations of \cite{Assier2018}. That is, we will say that a function of the two variables $(\xi_1,
\xi_2)$ is a $\circ +$ (resp. $\circ -$) function if it is analytic in the UHP
(resp. LHP) of the $\xi_2$ plane, when considered a function of $\xi_2$ only.
Similarly, we say that such a function is a $+ \circ$ (resp. $- \circ$)
function if it is analytic in the UHP (resp. LHP) of the $\xi_1$ plane, when
considered a function of $\xi_1$ only. We can then naturally define a $+ -$
function as a function that is at the same time a $+ \circ$ and a $\circ -$
function. Similarly, it is possible to define $+ +$, $- +$ and $- -$
functions. These properties will be indicated as a subscript when necessary.

Rewrite the factorisation as follows:
\begin{eqnarray*}
 \widetilde{K} (\tmmathbf{\xi})=\widetilde{K}_{\circ +}(\tmmathbf{\xi}) \times \widetilde{K}_{\circ -} (\tmmathbf{\xi}),&
 \widetilde{K}_{\circ +}(\tmmathbf{\xi})=\frac{1}{\gamma (\xi_1, \xi_2)},&
 \widetilde{K}_{\circ -}(\tmmathbf{\xi})=\frac{1}{\gamma (\xi_1, - \xi_2)}.
\end{eqnarray*}
Starting from the functional equation, we get
\begin{equation}
  \tilde{K} (\tmmathbf{\xi}) \tilde{W} (\tmmathbf{\xi})=i \tilde{U}
  (\tmmathbf{\xi})= i \left( \frac{1}{(\xi_1 + k_1) (\xi_2 + k_2)} + \tilde{U}'
  (\tmmathbf{\xi}) \right)  \label{eq:thproofpart0}
\end{equation}
Now, since $\tilde{U}'$ is 3/4-based, there exists a function $\mathfrak{u}'$
such that\footnote{We have used the notation $\mathfrak{u}'$ for brevity, but in fact, we know from (\ref{eq:BCDi}), (\ref{eq:defUtilde}) and (\ref{eqI0104}) that $\mathfrak{u}'(x_1,x_2)=u(x_1,x_2,0)$.} $\tilde{U}' =\mathfrak{F}_{3 / 4} [\mathfrak{u}']$. We can hence introduce
the useful functions $\tilde{U}'_{+ -}$ and $\tilde{U}'_{-\circ}$ by
\begin{eqnarray*}
  \tilde{U}' (\tmmathbf{\xi}) & = & \mathfrak{F}_{3 / 4} [\mathfrak{u}']
  (\tmmathbf{\xi}) = \iint_{Q_2 \cup Q_3 \cup Q_4} \mathfrak{u}'
  (\tmmathbf{x}) e^{i\tmmathbf{x} \cdot \tmmathbf{\xi}} \, \mathd
  \tmmathbf{\xi}\\
  & = & \iint_{Q_4} \mathfrak{u}' (\tmmathbf{x}) e^{i\tmmathbf{x} \cdot
  \tmmathbf{\xi}} \, \mathd \tmmathbf{\xi}+ \iint_{Q_2 \cup Q_3}
  \mathfrak{u}' (\tmmathbf{x}) e^{i\tmmathbf{x} \cdot \tmmathbf{\xi}} \,
  \mathd \tmmathbf{\xi}= \tilde{U}'_{+ -} (\tmmathbf{\xi}) + \tilde{U}'_{-
  \circ} (\tmmathbf{\xi})
\end{eqnarray*}
Now, using the factorisation of $\tilde{K}$, we get
\begin{eqnarray}
  \frac{\tilde{K}_{\circ +} (\tmmathbf{\xi}) \tilde{W} (\tmmathbf{\xi})}{i} &
  =  & \frac{1}{\widetilde{K}_{\circ -} (\tmmathbf{\xi}) (\xi_1 + k_1)
  (\xi_2 + k_2)} + \frac{\tilde{U}'_{+ -}
  (\tmmathbf{\xi})}{\widetilde{K}_{\circ -} (\tmmathbf{\xi})} +
  \frac{\tilde{U}'_{- \circ} (\tmmathbf{\xi})}{\widetilde{K}_{\circ -}
  (\tmmathbf{\xi})}  \label{eq:proofstep1}
\end{eqnarray}
We will also use various sum-split Cauchy operators, namely $[.]_{\circ +}$,
$[.]_{\circ -}$, $[.]_{+ \circ}$ and $[.]_{- \circ}$ defined as follows for
any function $\tilde{\Phi} (\xi_1, \xi_2)$ analytic within a product of strips we
have
\begin{eqnarray*}
  {}[\tilde{\Phi}]_{\circ -} (\xi_1, \xi_2) = \frac{- 1}{2 i \pi} \int_{- \infty + i
  \kappa}^{\infty + i \kappa} \frac{\tilde{\Phi} (\xi_1, \xi_2')}{\xi_2' - \xi_2}
  \, \mathd \xi_2' & \tmop{and} & [\tilde{\Phi}]_{\circ +} (\xi_1, \xi_2) =
  \frac{1}{2 i \pi} \int_{- \infty - i \kappa}^{\infty - i \kappa} \frac{\tilde{\Phi}
  (\xi_1, \xi_2')}{\xi_2' - \xi_2} \, \mathd \xi_2' \\
  {}[\tilde{\Phi}]_{- \circ} (\xi_1, \xi_2) = \frac{- 1}{2 i \pi} \int_{- \infty + i
  \kappa}^{\infty + i \kappa} \frac{\tilde{\Phi} (\xi_1', \xi_2)}{\xi_1' - \xi_1}
  \, \mathd \xi_1' & \tmop{and} & [\tilde{\Phi}]_{+ \circ} (\xi_1, \xi_2) =
  \frac{1}{2 i \pi} \int_{- \infty - i \kappa}^{\infty - i \kappa} \frac{\tilde{\Phi}
  (\xi_1', \xi_2)}{\xi_1' - \xi_1} \, \mathd \xi_1'.
  \label{eq:Cauchysumsplitinalpha1}
\end{eqnarray*}
We can hence rewrite (\ref{eq:proofstep1}) as
\begin{eqnarray*}
  \frac{\tilde{K}_{\circ +} (\tmmathbf{\xi}) \tilde{W} (\tmmathbf{\xi})}{i} &
  = & \frac{1}{\widetilde{K}_{\circ -} (\tmmathbf{\xi}) (\xi_1 + k_1)
  (\xi_2 + k_2)} + \frac{\tilde{U}'_{+ -}
  (\tmmathbf{\xi})}{\widetilde{K}_{\circ -} (\tmmathbf{\xi})} + \left[
  \frac{\tilde{U}'_{- \circ} (\tmmathbf{\xi})}{\widetilde{K}_{\circ -}
  (\tmmathbf{\xi})} \right]_{\circ -} + \left[ \frac{\tilde{U}'_{- \circ}
  (\tmmathbf{\xi})}{\widetilde{K}_{\circ -} (\tmmathbf{\xi})}
  \right]_{\circ +},
\end{eqnarray*}
and, upon rearranging, we get
\begin{eqnarray}
  \frac{\tilde{K}_{\circ +} (\tmmathbf{\xi}) \tilde{W} (\tmmathbf{\xi})}{i} -
  \left[ \frac{\tilde{U}'_{- \circ} (\tmmathbf{\xi})}{\widetilde{K}_{\circ
  -} (\tmmathbf{\xi})} \right]_{\circ +} & = & \frac{\tilde{U}'_{+ -}
  (\tmmathbf{\xi})}{\widetilde{K}_{\circ -} (\tmmathbf{\xi})} + \left[
  \frac{\tilde{U}'_{- \circ} (\tmmathbf{\xi})}{\widetilde{K}_{\circ -}
  (\tmmathbf{\xi})} \right]_{\circ -} + \frac{1}{\widetilde{K}_{\circ -}
  (\tmmathbf{\xi}) (\xi_1 + k_1) (\xi_2 + k_2)}. \nonumber \\
& & \label{eq:proofth3part2}
\end{eqnarray}
The sum-split in the $\xi_2$ plane of the term involving the poles can be done
explicitly by the pole removal technique to get
\begin{eqnarray*}
  \frac{1}{\widetilde{K}_{\circ -} (\tmmathbf{\xi}) (\xi_1 + k_1) (\xi_2 +
  k_2)} & = & \frac{1}{(\xi_1 + k_1) (\xi_2 + k_2)} \left(
  \frac{1}{\widetilde{K}_{\circ -} (\tmmathbf{\xi})} -
  \frac{1}{\widetilde{K}_{\circ -} (\xi_1, - k_2)} \right)\\
  & + & \frac{1}{\widetilde{K}_{\circ -} (\xi_1, - k_2) (\xi_1 + k_1)
  (\xi_2 + k_2)},
\end{eqnarray*}
which finally leads (\ref{eq:proofth3part2}) to become
\begin{eqnarray}
  \frac{\tilde{K}_{\circ +} (\tmmathbf{\xi}) \tilde{W} (\tmmathbf{\xi})}{i} -
  \left[ \frac{\tilde{U}'_{- \circ} (\tmmathbf{\xi})}{\widetilde{K}_{\circ
  -} (\tmmathbf{\xi})} \right]_{\circ +} - \frac{1}{\widetilde{K}_{\circ -}
  (\xi_1, - k_2) (\xi_1 + k_1) (\xi_2 + k_2)} & = &  \nonumber\\
  \frac{\tilde{U}'_{+ -} (\tmmathbf{\xi})}{\widetilde{K}_{\circ -}
  (\tmmathbf{\xi})} + \left[ \frac{\tilde{U}'_{- \circ}
  (\tmmathbf{\xi})}{\widetilde{K}_{\circ -} (\tmmathbf{\xi})}
  \right]_{\circ -} + \frac{1}{(\xi_1 + k_1) (\xi_2 + k_2)} \left(
  \frac{1}{\widetilde{K}_{\circ -} (\tmmathbf{\xi})} -
  \frac{1}{\widetilde{K}_{\circ -} (\xi_1, - k_2)} \right). &  & 
  \label{eq:proofpart4}
\end{eqnarray}
One can see that the LHS of (\ref{eq:proofpart4}) is analytic in the UHP of
the $\xi_2$ plane, while its RHS is analytic in the LHP of the $\xi_2$ plane.
Application of Liouville's theorem in the $\xi_2$ plane implies that both sides are zero, leading
to
\begin{eqnarray}
  \tilde{W} (\tmmathbf{\xi}) & = & \frac{i}{\widetilde{K}_{\circ -} (\xi_1,
  - k_2) \tilde{K}_{\circ +} (\tmmathbf{\xi}) (\xi_1 + k_1) (\xi_2 + k_2)} +
  \frac{i}{\tilde{K}_{\circ +} (\tmmathbf{\xi})} \left[ \frac{\tilde{U}'_{-
  \circ} (\tmmathbf{\xi})}{\widetilde{K}_{\circ -} (\tmmathbf{\xi})}
  \right]_{\circ +} \nonumber\\
  & = & \frac{i \gamma (\xi_1, \xi_2) \gamma (\xi_1, k_2)}{(\xi_1 + k_1)
  (\xi_2 + k_2)} + \frac{\gamma (\xi_1, \xi_2)}{2 \pi} \int_{- \infty - i
  \kappa}^{\infty - i \kappa} \frac{\tilde{U}'_{- \circ} (\xi_1,
  \xi_2')}{(\xi_2' - \xi_2) \widetilde{K}_{\circ -} (\xi_1, \xi_2')}
  \, \mathd \xi_2' \nonumber\\
  & = & \frac{i \gamma (\xi_1, \xi_2) \gamma (\xi_1, k_2)}{(\xi_1 + k_1)
  (\xi_2 + k_2)} + \frac{\gamma (\xi_1, \xi_2)}{2 \pi} \int_{- \infty - i
  \kappa}^{\infty - i \kappa} \frac{\gamma (\xi_1, - \xi_2') \tilde{U}'_{-
  \circ} (\xi_1, \xi_2')}{(\xi_2' - \xi_2)} \, \mathd \xi_2'. \nonumber \\
 & & 
  \label{eq:thproofpart5}
\end{eqnarray}
Let us now note from (\ref{eq:thproofpart0}) that we have
\begin{eqnarray}
  \tilde{U}'_{- \circ} (\xi_1, \xi_2') & = & \left[ \frac{\tilde{K} (\xi_1,
  \xi_2') \tilde{W} (\xi_1, \xi_2')}{i} \right]_{- \circ} \nonumber\\
  & = & \frac{1}{2 \pi} \int_{- \infty + i \kappa}^{\infty + i \kappa}
  \frac{\tilde{K} (\xi_1', \xi_2') \tilde{W} (\xi_1', \xi_2')}{\xi_1' - \xi_1}
  \, \mathd \xi_1'. \label{eq:thproofpart6}
\end{eqnarray}
Combining (\ref{eq:thproofpart5}) and (\ref{eq:thproofpart6}), we obtain
\begin{eqnarray*}
  \tilde{W} (\tmmathbf{\xi}) & = & \frac{i \gamma (\xi_1, \xi_2) \gamma
  (\xi_1, k_2)}{(\xi_1 + k_1) (\xi_2 + k_2)}\\
  & + & \frac{\gamma (\xi_1, \xi_2)}{2 \pi} \int_{- \infty - i
  \kappa}^{\infty - i \kappa} \frac{\gamma (\xi_1, - \xi_2')}{(\xi_2' -
  \xi_2)} \left( \frac{1}{2 \pi} \int_{- \infty + i \kappa}^{\infty + i
  \kappa} \frac{\tilde{K} (\xi_1', \xi_2') \tilde{W} (\xi_1', \xi_2')}{\xi_1'
  - \xi_1} \, \mathd \xi_1' \right) \, \mathd \xi_2'\\
  & = & \frac{i \gamma (\xi_1, \xi_2) \gamma (\xi_1, k_2)}{(\xi_1 + k_1)
  (\xi_2 + k_2)}\\
  & + & \frac{\gamma (\xi_1, \xi_2)}{4 \pi^2} \int_{- \infty - i
  \kappa}^{\infty - i \kappa} \frac{\gamma (\xi_1, - \xi_2')}{(\xi_2' -
  \xi_2)} \left( \int_{- \infty + i \kappa}^{\infty + i \kappa}
  \frac{\tilde{K} (\xi_1', \xi_2') \tilde{W} (\xi_1', \xi_2')}{\xi_1' - \xi_1}
  \, \mathd \xi_1' \right) \, \mathd \xi_2',
\end{eqnarray*}
as required. The proof of the second formula can be obtained in a very similar
way by first splitting $\tilde{U}'$ as an integral over $Q_2$ and an
integral over $Q_3 \cup Q_4$, and then perform a sum split of the
functional equation in the $\xi_1$ plane instead.


\section{Singularities of the analytical continuation} \label{app:singvswavefield}

Consider the real plane $(\xi_1, \xi_2)$ and the function 
$\tilde W(\xi_1 , \xi_2)$ on this plane. This plane will be called 
the physical real plane of $(\xi_1, \xi_2)$. In the analytical continuation procedure we found the following 
singularities of $\tilde W$:

\begin{itemize}
\item
a part of the circle 
$\xi_1^2 + \xi_2^2 = k^2$, which is the branch 2-line of the coefficient
of the functional equation;
 
\item 
the branch 2-lines $\xi_1 = - k$ and  $\xi_2 = - k$ (the cuts $h^-$ start at $-k$); 

\item
the polar 2-lines $\xi_1 = - k_1$ and $\xi_2 = - k_2$.

\end{itemize}

Note that only the part of the circle shown in bold in Fig.~\ref{fig05} (left) is a singular set on the physical plane. 
All the rest of the circle belongs to the set of analyticity. This can be shown as follows. Let $\epsilon$ be small positive, and then take the limit $\epsilon \to 0$. 
Consider a vicinity of the point $\xi_1 = \cos (\vph)$, $\xi_2 = \sin (\vph)$, 
namely, consider the complex numbers 
\[
\xi_1 = \cos (\vph) + \mu_1, 
\qquad 
\xi_2 = \sin (\vph) + \mu_2, 
\]
where $\vph$ is real, and $\mu_{1,2}$ are small and chosen such that
\[
\xi_1^2 + \xi_2^2 =k^2= 1 + i \epsilon.
\]
In the linear approximation, this gives 
\begin{equation}
\cos(\vph) {\rm Im}(\mu_1) + \sin(\vph) {\rm Im}(\mu_2) = \epsilon / 2. 
\label{eqI0120}
\end{equation}

 \begin{figure}[ht]
 \centerline{\epsfig{file=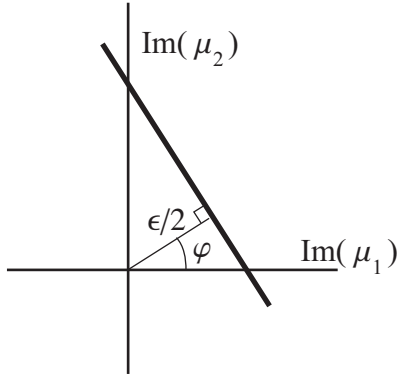}}
 \caption{Straight line structure resulting from (\ref{eqI0120})}
 \label{fig04}
 \end{figure}

The set of values $\text{Im}(\mu_{1,2})$ obeying (\ref{eqI0120}) is a line (see Fig.~\ref{fig04}), the slope of which is determined by~$\vph$. One can see that if 
$-\pi/ 2 < \vph < \pi$ then this line passes through the zone with 
${\rm Im}(\xi_1)={\rm Im}(\mu_1) > 0$, ${\rm Im}(\xi_2)={\rm Im}(\mu_2) > 0$. Thus, some singular points appear in the 
zone of prescribed analyticity of $\tilde W$, which is prohibited. Hence, only the part of the circle with $-\pi < \vph < -\pi / 2 $ may belong to the 
common part of the singular set of $\tilde W$ and the physical real plane. 
This part of the circle is shown in Fig.~\ref{fig05} (left).   

 \begin{figure}[ht]
 \centerline{\epsfig{file=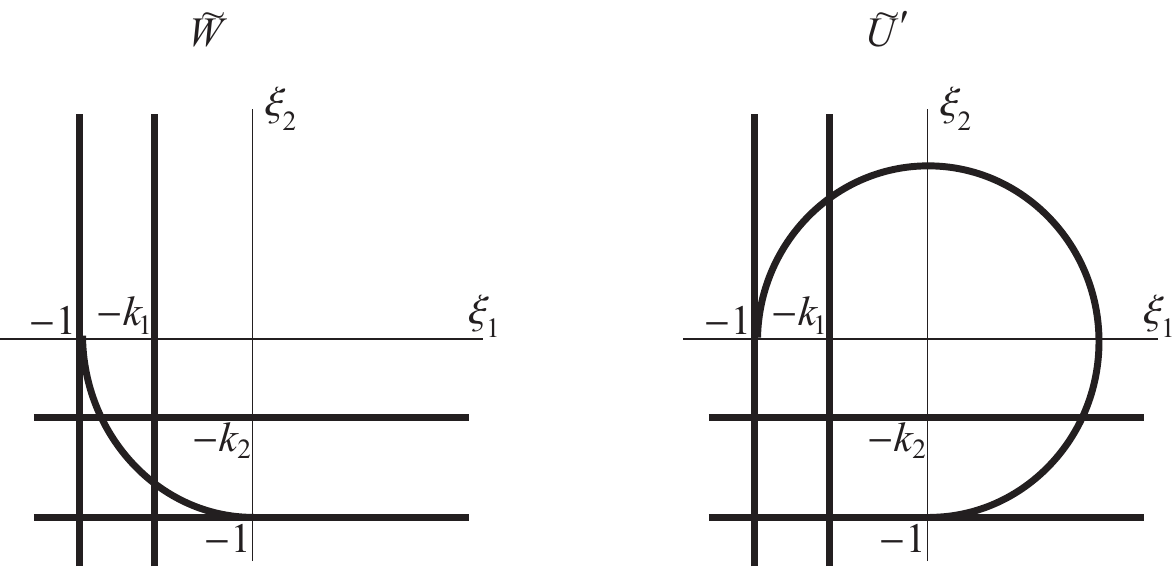}}
 \caption{Singularities of $\tilde W$ (left) and $\tilde U'$ (right)}
 \label{fig05}
 \end{figure}

The branching of $\tilde W$ on the part of the circle with $-\pi < \vph < -\pi / 2 $
can be used to find the asymptotics of the function  
\[
\frac{\ptl u}{\ptl n}
 (- R \cos (\vph) , - R \sin (\vph) , 0^+) 
 \quad 
 \mbox{ as } \quad R \to \infty.
\] 
For this, the inverse Fourier transform and the saddle point method can be used in a standard way. Note that 
this function is equal to zero for $-\pi/2 < \vph < \pi$, so the absence of branching at this 
part of the circle is quite natural. 

As we have mentioned, the circle $\xi_1^2 + \xi_2^2 = k^2$ corresponds to the singular set of the 
coefficient of the functional equation, so it can be expected that this set will be 
a singular set of the solution. This situation reminds the classical 1D Wiener--Hopf method. 
However, the presence of the branch 2-lines $\xi_1 = - k$, $\xi_2 = - k$ is something new that 
appears only in the 2D case. Physically, however, these branch lines are understandable. 
They correspond to the edge singularities of the field $u$ having the 
phase dependencies $\sim e^{i k x_2}$ for the edge $x_1 =0$, and 
$\sim e^{i k x_1}$ for  $x_2 =0$. Thus, in some sense,  this corresponds to the edge singularities of the field scattered by the vertex. 

Finally, the polar 2-lines $\xi_1 = - k_1$, $\xi_2 = - k_2$ correspond to the cones of rays scattered 
by the edges. The crossing point $(-k_1 , - k_2)$ is very important and corresponds 
to the incident and the reflected plane waves. 

Let us now consider the singularities of the function $\tilde U'$ defined by (\ref{eqI0106}). 
This function is 3/4-based, so its inverse Fourier transform (\ref{eqI0105}) 
should be equal to zero
on the quadrant $Q_1$. 
Note that when such a transform is made in $(x_1 , x_2) \in Q_1$, the ``contour of integration''
should be deformed into the domain ${\rm Im}(\xi_1) < 0$, ${\rm Im}(\xi_2) < 0$, 
i.e.\ the continuation established by Theorem \ref{th:th7} plays a crucial role.

Three important notes should be made about the singularities of $\tilde U'$.

\begin{enumerate}

\item
The polar sets $\xi_1 = -k_1$ and $\xi_2 = -k_2$ are intersecting within the domain $H^-\times H^-$. This can potentially produce an exponential term, but it is easy to check that the double 
residue of  (\ref{eqI0115}), (\ref{eqI0116}) is compensated by the explicit term 
of (\ref{eqI0106}) since $\gamma(k_2,k_1)/\gamma(k_2,-k_1)=1$. 

\item
Function $\tilde U'$ is regular at the points of the circle 
$\xi_1^2 + \xi_2^2 = k^2$ corresponding to $-\pi < \vph < -\pi /2$. The physical reasoning is as follows. Any branching at this point of the circle would lead to a field $u$ with asymptotics 
$\sim \exp \{ i k \sqrt{x_1^2 + x_2^2} \}$ on $Q_1$, which is prohibited by the boundary condition 
(\ref{eq:BCDi}). The mathematical reasoning is based on Theorem \ref{th:th7}. If $\epsilon \ne 0$ then the
points belonging to this part of the circle belong to the domain 
$(H^- \setminus \{-k_1 \}) \times (H^- \setminus \{-k_2 \})$. Namely, 
they are points 
\[
( - k \cos(\vph) , -k \sin (\vph))
\]  
for some (narrow) complex strip surrounding the real segment $-\pi < \vph < -\pi /2$. 
According to Theorem \ref{th:th7}, at these points $\tilde U'$ should be regular. Thus, 
$\tilde U'$ is allowed to be singular only at the points shown in Fig.~\ref{fig05}~(right).

\item
The crossing of branch lines $\xi_1 = -k$, $\xi_2 = -k$  belongs 
to the domain ${\rm Im}(\xi_1) < 0$, ${\rm Im}(\xi_2) < 0$. Potentially, such a crossing 
can lead to a wave component with the asymptotics 
$\sim \exp \{ i k(x_1 + x_2) \}$. Obviously, such a wave component does not exist. 
Thus, the crossing of branch lines should cause no field terms. As shown in Section \ref{sec:additivecrossing}, this characteristic of $\tilde U'$ is linked to the concept of additive crossing.
\end{enumerate}


\section{Laplace-like uniqueness theorem}\label{app:laplace}

\begin{theorem}[1D uniqueness theorem]\label{th:B1}
Let $L$ be a smooth curve without self-crossings in the $z$ complex plane.
Let $L$ start at the origin and end at infinity while lying within the sector $\beta_1 <{\rm Arg} (z) < \beta_2$ such that
$\beta_2 - \beta_1 < \pi$. 
Let $L$ be ``simple'' in the sense that there exists a constant $A>0$ such that for any $r_1$ and $r_2$ with $r_2>r_1>0$, the length of the curve $L$ within the annulus $r_1\leq r \leq r_2$ (in polar coordinates) is bounded by $A(r_2-r_1)$.
    Let $f(z)$ be a smooth function defined on $L$ and decaying as $|z| \to \infty$. Finally, let us assume that for all $s$ such that $ -\beta_1 <{\rm Arg}(s) < \pi - \beta_2$ we have
\begin{equation}
F(s) \equiv \int_L f(z) e^{i s z} \mathd z =0.
\label{eq:A101}
\end{equation}
Then $f(z) \equiv 0$.
\end{theorem}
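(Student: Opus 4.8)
The plan is to reduce, by a rotation, to a curve lying in the upper half-plane, and then to recognise the Cauchy transform of a suitably weighted copy of $f$ as a Laplace transform of $F$ that vanishes identically. First I would exploit the bound $\beta_2-\beta_1<\pi$: choosing $\theta_0\in(-\beta_1,\pi-\beta_2)$ (a nonempty interval) and replacing $L$ by $e^{i\theta_0}L$ and $f(z)$ by $f(e^{-i\theta_0}z)$, the hypotheses are preserved (a rotation is an isometry, so smoothness, decay and the ``simple'' length bound survive), the rotated curve lies in a closed subsector of $\{0<{\rm Arg}\,z<\pi\}$, and the sector $S=\{-\beta_1<{\rm Arg}\,s<\pi-\beta_2\}$ of admissible $s$'s is carried onto a sector containing the positive real axis. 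Hence I may assume from the start that there is $c>0$ with ${\rm Im}\,z\ge c|z|$ for all $z\in L$, and that $F(\sigma)=0$ for every $\sigma>0$. The ``simple'' hypothesis then gives $\int_L e^{-\kappa|z|}\,|\mathd z|<\infty$ for all $\kappa>0$, so every integral below converges absolutely.

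Next I would fix a small $\epsilon_0>0$ and set $\mathd\mu(z)=f(z)e^{i\epsilon_0 z}\,\mathd z$, a finite complex measure on $L$ with $|\mathd\mu|\le\|f\|_\infty e^{-\epsilon_0 c|z|}\,|\mathd z|$, and consider its Cauchy transform $G(\zeta)=\int_L\frac{\mathd\mu(z)}{z-\zeta}$, holomorphic on $\mathbb{C}\setminus L$. For ${\rm Im}\,\zeta<0$ one has ${\rm Im}(z-\zeta)>0$ on $L$, so $\frac{1}{z-\zeta}=-i\int_0^\infty e^{i(z-\zeta)\tau}\,\mathd\tau$; since $\int_L\int_0^\infty e^{-\tau({\rm Im}\,z-{\rm Im}\,\zeta)}\,\mathd\tau\,|\mathd\mu(z)|\le\|\mu\|/|{\rm Im}\,\zeta|<\infty$, Fubini's theorem yields
\[
G(\zeta)=-i\int_0^\infty e^{-i\zeta\tau}\Big(\int_L f(z)\,e^{i(\epsilon_0+\tau)z}\,\mathd z\Big)\,\mathd\tau=-i\int_0^\infty e^{-i\zeta\tau}\,F(\epsilon_0+\tau)\,\mathd\tau .
\]
For every $\tau\ge0$ the point $\epsilon_0+\tau$ is a positive real, hence lies in $S$, so $F(\epsilon_0+\tau)=0$; therefore $G\equiv0$ on the whole lower half-plane.

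To conclude I would use two classical facts. Since $L$ is a smooth Jordan arc running from $0$ to infinity, $L\cup\{\infty\}$ is a simple arc on the Riemann sphere and so does not disconnect it; thus $\mathbb{C}\setminus L$ is connected, and the holomorphic function $G$, vanishing on the open set $\{{\rm Im}\,\zeta<0\}$, vanishes throughout $\mathbb{C}\setminus L$. On the other hand $\mathd\mu$ has a smooth, rapidly decaying density with respect to arc length, so the Sokhotski--Plemelj formula applies at interior points of $L$: the jump of $G$ across $L$ equals $2\pi i$ times that density. As $G\equiv0$, the density is identically zero, i.e.\ $f(z)e^{i\epsilon_0 z}\equiv0$ on $L$, whence $f\equiv0$.

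The step I expect to be the real obstacle is making the two ``textbook'' facts rigorous on the unbounded curve $L$: the interchange leading to $G(\zeta)=-i\int_0^\infty e^{-i\zeta\tau}F(\epsilon_0+\tau)\,\mathd\tau$ and the existence and value of the Plemelj jump — because $L$, though smooth and without self-crossings, may be geometrically complicated within its sector. This is exactly where the quantitative ``simple'' hypothesis (length controlled in every annulus, giving the integrability and the tameness of the Cauchy integral and its boundary values) and the opening bound $\beta_2-\beta_1<\pi$ (which both produces the decaying weight $e^{i\epsilon_0 z}$ along $L$ and ensures $0\in S$) are essential; without either, the representation of $G$ or the endpoint behaviour could fail. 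Should the Plemelj argument prove awkward, one can replace the final step by the observation that $G$ vanishes off the null set $L$, so $\bar\partial G=0$ in the sense of distributions, and since $\bar\partial G$ is a nonzero multiple of $\mathd\mu$ this forces $\mathd\mu=0$ directly.
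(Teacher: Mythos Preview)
Your proof is correct and takes a genuinely different route from the paper's. Both arguments pass through the Cauchy transform of (a weighted version of) $f$ along $L$ and finish with the Sokhotski--Plemelj jump, but they diverge in how they establish that the transform has no jump across~$L$. The paper restricts its Cauchy transform $y(z)=\tfrac{i}{2\pi}\int_L\frac{f(z')\,\mathd z'}{z'-z}$ to the two bounding rays ${\rm Arg}\,z=\beta_{1,2}$, forms their Laplace-type transforms $Y_{1,2}(s)$, uses $F\equiv 0$ to show $Y_1=Y_2$ on the overlap sector $\{-\beta_1<{\rm Arg}\,s<\pi-\beta_2\}$, and then Mellin-inverts along a common contour to conclude that $y$ extends holomorphically across the entire sector and hence across~$L$. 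You instead weight by $e^{i\epsilon_0 z}$ (which both makes the Cauchy integral absolutely convergent under only the weak ``decaying'' hypothesis on $f$ and, after rotation, puts the points $\epsilon_0+\tau$ on the positive real axis where $F$ vanishes), obtain $G\equiv 0$ on the lower half-plane by a clean Fubini computation, and then use the purely topological fact that a Jordan arc to infinity does not disconnect $\mathbb{C}$ to propagate $G\equiv 0$ everywhere. Your approach is arguably more elementary---no Mellin inversion, no auxiliary glued function $Y$---and is more explicit about convergence; the paper's approach keeps the two boundary rays and the overlap sector visible throughout, which is closer in spirit to the Wiener--Hopf philosophy the paper is building.
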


{\bf Note.}
If $\beta_1 = \beta_2 = 0$ then (\ref{eq:A101}) is the Laplace transform. One can take the inverse (Mellin) transform and, thus, the theorem is trivial.

 \begin{figure}[ht]
 \centerline{\epsfig{file=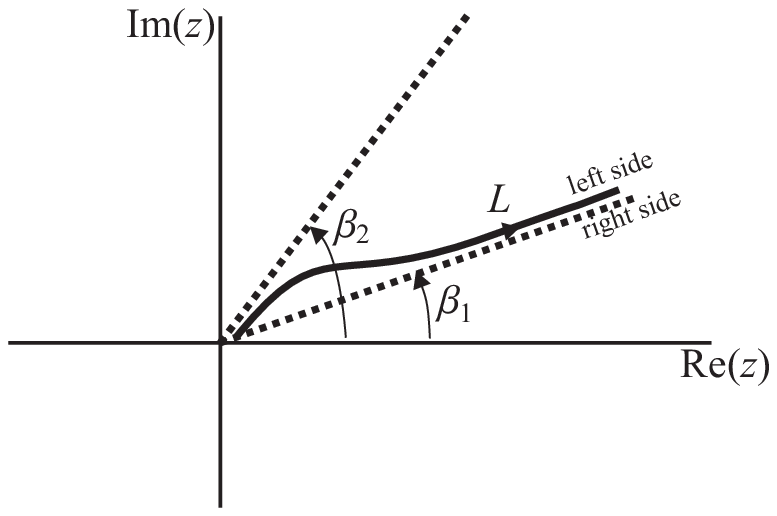}}
 \caption{Contour $L$}
 \label{figA01}
 \end{figure}

\begin{proof} The proof will consist of three main steps.

{\bf Step 1.} 
Consider $L$ as a cut of the $z$ complex plane and define the left and right shores of the cut as in Fig.~\ref{figA01}. Construct a function $y(z)$ analytic in $\mathbb{C} \setminus L$ 
such that 
\begin{equation}
f(z) = y^{\rm r} (z) - y^{\rm l}(z) , \qquad z \in L,
\label{eq:A102}
\end{equation} 
where $y^{\rm l}$ and $y^{\rm r}$ are the limiting values of $y$ on the left and 
right shores of $L$ respectively. Such a function is given by the 
Sokhotsky formula
\begin{equation}
y(z) = \frac{i}{2\pi} \int_L \frac{f(z') \mathd z'}{z'-z} .
\label{eq:A103}
\end{equation}
 
{\bf Step 2.} 
Let $y_1 (z)$ and $y_2 (z)$ be the restrictions of $y(z)$ onto the lines ${\rm Arg}(z) = \beta_1$
and ${\rm Arg}(z) = \beta_2$ respectively. 
Let $Y_1 (s)$ and $Y_2 (s)$ be defined by 
\begin{equation}
Y_{1,2} (s) \equiv  e^{i \beta_{1,2}} \int \limits_0^{\infty} y_{1,2}(\tau e^{i \beta_{1,2}}) 
\exp \{ i s \tau e^{i \beta_{1,2}} \} \mathd \tau .
\label{eq:A104}
\end{equation}

The functions $Y_1$ and $Y_2$ are analytic in the domains where the integrals converge due to the exponential factor, i.e.\ in the sectors $-\beta_1 < {\rm Arg} (s) < \pi -\beta_1$, 
$-\beta_2 < {\rm Arg} (s) < \pi -\beta_2$, respectively. These sectors have the common part 
\[
{\mathcal S}: \quad -\beta_1 < {\rm Arg}(s) < \pi - \beta_2
\] 

Due to Cauchy's theorem and the analyticity property of $y$, for $s \in \mathcal S$ we have 
\begin{equation}
\int_L y^{\rm r} (z) e^{i s z} dz = Y_1(s), 
\label{eq:A105}
\end{equation}
\begin{equation}
\int_L y^{\rm l} (z) e^{i s z} dz = Y_2(s). 
\label{eq:A106}
\end{equation}

But by hypothesis,
\[
Y_1 (s)-Y_2(s) = F(s) = 0\qquad \mbox{ for }s \in \mathcal{S}.
\]
This means that we can analytically continue $Y_1(s)$ and $Y_2(s)$ into a function $Y (s)$ that is analytic on the sector $-\beta_2 < {\rm Arg}(s) < \pi - \beta_1$. 

 \begin{figure}[ht]
 \centerline{\epsfig{file=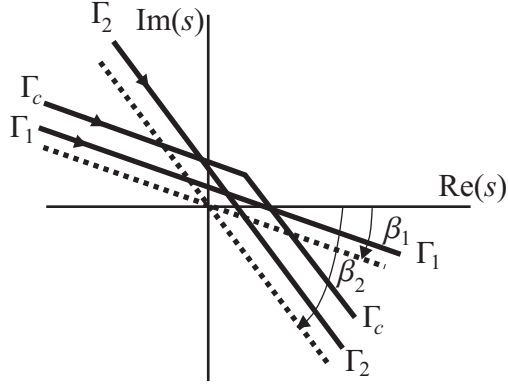}}
 \caption{Contours $\Gamma_1$, $\Gamma_2$, and $\Gamma_c$}
 \label{figA02}
 \end{figure}

{\bf Step 3.} Let us now introduce the contours $\Gamma_1$ and $\Gamma_2$ as shown in Fig.~\ref{figA02}. 
One can reconstruct $y_1 (z)$ and $y_2 (z)$ by Mellin transform: 
\begin{equation}
y_{1,2} (z) = \frac{1}{2\pi} \int_{\Gamma_{1,2}} Y (s) e^{- i s z} \mathd s .
\label{eq:A107}
\end{equation}

Introduce the contour $\Gamma_c$ as shown in Fig.~\ref{figA02}. 
Note that due to Cauchy's theorem in (\ref{eq:A107}) one can deform $\Gamma_{1}$ or 
$\Gamma_2$ into $\Gamma_c$ to get: 
\begin{equation}
y_{1,2} (z) = \frac{1}{2\pi} \int_{\Gamma_c} Y (s) e^{- i s z} \mathd s .
\label{eq:A108}
\end{equation}
Moreover, remembering that $\beta_2-\beta_1<\pi$, it is easy to check that the integral of (\ref{eq:A108})
defines a function analytic in the sector $\beta_1 <{\rm Arg}(z) < \beta_2$. This function is 
hence a common analytical continuation of $y_1$ and $y_2$, implying that $y^{\rm r}(z) = y^{\rm l}(z)$, and finally $f(z) = 0$, as required.
\end{proof}

It is now possible to formulate and prove the 2D analog of this theorem as follows. 

\begin{theorem}[2D uniqueness theorem]
Let $L$ obey conditions of Theorem \ref{th:B1}. Let $f(z_1, z_2)$ be a smooth function defined on $L \times L$ and decaying 
as $|z_1|+ |z_2| \to \infty$. Let us assume that for all $s_1 , s_2$ such that 
$ -\beta_1 <{\rm Arg}(s_{1,2}) < \pi - \beta_2$, we have
\begin{equation}
F(s_1, s_2) \equiv \int _{L} \int_L  f(z_1 , z_2) e^{i (s_1 z_1 + s_2 z_2)} 
\mathd z_1 \, \mathd z_2  =0.
\label{eq:A109}
\end{equation}
Then $f(z_1, z_2) \equiv 0$.
\end{theorem}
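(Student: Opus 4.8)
The plan is to reduce the two–dimensional statement to two successive applications of the one–dimensional uniqueness theorem (Theorem~\ref{th:B1}), one in each variable, the variables decoupling completely because the exponential in (\ref{eq:A109}) factors as $e^{i s_1 z_1} e^{i s_2 z_2}$. Fix $s_2$ with $-\beta_1 < \mathrm{Arg}(s_2) < \pi - \beta_2$ and introduce the partial transform
\begin{equation}
g_{s_2}(z_1) \equiv \int_L f(z_1, z_2)\, e^{i s_2 z_2}\, \mathd z_2, \qquad z_1 \in L .
\end{equation}
The first task is to check that, for each such $s_2$, the function $g_{s_2}$ satisfies the hypotheses of Theorem~\ref{th:B1} as a function of $z_1$ on $L$. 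When $s_2$ lies strictly inside its sector, for $z_2 = \tau e^{i\theta} \in L$ one has $\mathrm{Arg}(s_2 z_2) \in (0,\pi)$ bounded away from the endpoints, so $e^{i s_2 z_2}$ decays exponentially along $L$; combined with the ``simple'' property of $L$ (at most linear growth of arc length), this makes the integral absolutely convergent, and the decay of $f$ as $|z_1| + |z_2| \to \infty$ shows that $g_{s_2}(z_1) \to 0$ as $|z_1| \to \infty$. Smoothness of $g_{s_2}$ follows by differentiating under the integral sign, the exponential decay of $e^{i s_2 z_2}$ providing the required local domination.

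Next, by Fubini's theorem (legitimate by the same absolute convergence), for all $s_1$ with $-\beta_1 < \mathrm{Arg}(s_1) < \pi - \beta_2$ we have
\begin{equation}
\int_L g_{s_2}(z_1)\, e^{i s_1 z_1}\, \mathd z_1 = F(s_1, s_2) = 0 .
\end{equation}
Theorem~\ref{th:B1}, applied in the variable $z_1$, then gives $g_{s_2}(z_1) = 0$ for every $z_1 \in L$ and every admissible $s_2$. Now fix an arbitrary $z_1 \in L$; the identity just obtained reads
\begin{equation}
\int_L f(z_1, z_2)\, e^{i s_2 z_2}\, \mathd z_2 = 0 \qquad \text{for all } s_2 \text{ with } -\beta_1 < \mathrm{Arg}(s_2) < \pi - \beta_2 .
\end{equation}
Since $z_2 \mapsto f(z_1, z_2)$ is smooth on $L$ and decays as $|z_2| \to \infty$ by hypothesis, Theorem~\ref{th:B1} applies once more, now in $z_2$, and yields $f(z_1, z_2) = 0$ for all $z_2 \in L$. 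As $z_1 \in L$ was arbitrary, $f \equiv 0$ on $L \times L$.

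The only genuinely non–routine point is the verification that the partial transform $g_{s_2}$ inherits precisely the regularity and decay demanded by Theorem~\ref{th:B1}; everything else is bookkeeping with Fubini's theorem and the geometric estimate on $L$ that is already part of the hypotheses. It is worth noting, to dispel a possible worry, that fixing $s_2$ does not shrink the admissible sector for $s_1$: because the two variables separate in the kernel, the full range $-\beta_1 < \mathrm{Arg}(s_1) < \pi - \beta_2$ is available at the first invocation of the 1D theorem, exactly as needed.
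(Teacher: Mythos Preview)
Your proof is correct and follows essentially the same approach as the paper: reduce the 2D statement to two successive applications of Theorem~\ref{th:B1} via the partial transform, using Fubini to decouple the variables. The only cosmetic difference is the order in which the variables are treated (you integrate in $z_2$ first and apply the 1D theorem in $z_1$, whereas the paper does the reverse), and you supply more detail than the paper on why the intermediate function inherits the smoothness and decay needed.
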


\begin{proof}
Let us start by defining the function $g(z_2)$ by
\begin{eqnarray}
g(z_2) & = & \int_L f(z_1,z_2)e^{is_1z_1}\,\mathd z_1. \label{eq:lastone}
\end{eqnarray}
The hypothesis (\ref{eq:A109}) of the theorem can be rewritten as
\begin{eqnarray}
\int_L g(z_2)e^{is_2z_2}\, \mathd z_2 &=& 0.
\end{eqnarray}
The function $g$ can be shown to be smooth and decaying along $L$ and hence, using Theorem \ref{th:B1}, this implies that $g(z_2)=0$ for all $z_2\in L$. Now the integral in (\ref{eq:lastone}) is equal to zero. We can hence apply Theorem \ref{th:B1} one last time to prove that $f(z_1,z_2)=0$, as required.
\end{proof}



\end{document}